\numberwithin{equation}{section}
\theoremstyle{plain}
\newtheorem{thm}{Theorem}[section]
\newtheorem{lem}[thm]{Lemma}
\newtheorem{prop}[thm]{Proposition}
\theoremstyle{definition}
\newtheorem{defn}[thm]{Definition}
\newcommand\R{{\mathbb R}}
\newcommand\C{{\mathbb C}}
\newcommand\N{{\mathbb N}}
\newcommand\Z{{\mathbb Z}}
\newcommand\de{\delta}
\newcommand\De{\Delta}
\newcommand\ep{\varepsilon}
\newcommand\la{\lambda}
\newcommand\va{\varphi}
\newcommand\om{\omega}
\newcommand\Om{\Omega}
\newcommand\pa{\partial}
\def\Dcal{\mathcal{D}}
\def\Qcal{\mathcal{Q}}
\def\Bscr{\mathscr{B}}
\def\Dscr{\mathscr{D}}
\def\Fscr{\mathscr{F}}
\def\Hscr{\mathscr{H}}
\def\Sscr{\mathscr{S}}
\newcommand{\vertiii}[1]{{\left\vert\kern-0.25ex\left\vert\kern-0.25ex\left\vert #1 
    \right\vert\kern-0.25ex\right\vert\kern-0.25ex\right\vert}}
\title{$L^p$-mapping properties for 
Schr\"odinger operators in open sets of $\R^d$}
\author[T. Iwabuchi, T. Matsuyama, 
K. Taniguchi]{Tsukasa Iwabuchi, Tokio Matsuyama 
and Koichi Taniguchi }
\address{ 
Tsukasa Iwabuchi \endgraf 
Department of Mathematics \endgraf 
Osaka City University \endgraf
3-3-138 Sugimoto, Sumiyoshi-ku \endgraf
Osaka 558-8585 \endgraf
Japan}
\email{iwabuchi@sci.osaka-cu.ac.jp}
\address{ 
Tokio Matsuyama \endgraf 
Department of Mathematics \endgraf 
Chuo University \endgraf 
1-13-27, Kasuga, Bunkyo-ku \endgraf 
Tokyo 112-8551 \endgraf 
Japan}
\email{tokio@math.chuo-u.ac.jp} 
\address{ 
Koichi Taniguchi \endgraf 
Department of Mathematics \endgraf 
Chuo University \endgraf 
1-13-27, Kasuga, Bunkyo-ku \endgraf 
Tokyo 112-8551 \endgraf 
Japan} 
\email{koichi-t@gug.math.chuo-u.ac.jp} 
\thanks{
The first author was supported by JSPS Grant-in-Aid for Young 
Scientists (B) (No. 25800069), 
Japan Society for the Promotion of Science. 
The second author was supported by 
Grant-in-Aid for Scientific 
Research (C) (No. 15K04967), 
Japan Society for the Promotion of Science. }
\keywords{Schr\"odinger operators, functional calculus, Kato class}
\begin{document}

\footnote[0]
{2010 {\it Mathematics Subject Classification.} 
Primary 47F05; Secondary 26D10;}

\begin{abstract}
Let $H_V=-\De +V$ be a Schr\"odinger operator on an arbitrary open set $\Om$ of $\R^d$,
where $d \ge 3$, and $\De$ is the Dirichlet Laplacian and the potential $V$ belongs to the Kato class on $\Om$. 
The purpose of this paper is 
to show $L^p$-boundedness of an operator $\va(H_V)$ for any rapidly decreasing function $\va$ on $\mathbb{R}$. 
$\va(H_V)$ is defined by the spectral theorem. 
As a by-product, $L^p$-$L^q$-estimates for $\va(H_V)$ are also obtained.
\end{abstract}
\maketitle


\section{Introduction and main result}
Let 
$\Omega$ be an open set of $\R^d$, where $d\ge3$. 
We consider the Schr\"odinger operator 
$$
H_V = H + V(x),
$$ 
where 
\[
H := -\De = -\sum^{d}_{j=1} \frac{\partial^2}{\partial x^2_j}
\]
is the Dirichlet Laplacian with domain 
\[
	\Dcal(H) = \left\{u \in H^1_0(\Om)\, \big| \, \De u \in L^2(\Om)\right\}
\]
and 
$V(x)$ is a real-valued measurable function on $\Om$.  
If we impose an appropriate assumption on $V(x)$, $H_V$ will be a self-adjoint operator on $L^2(\Om)$. 
Let $\{ E_{H_V}(\la) \}_{\la \in \mathbb{R}}$ be the spectral resolution of the identity for $H_V$. Then $H_V$ is written as
\[
	H_V = \int^{\infty}_{-\infty} \la \, d E_{H_V}(\la). 
\]
Hence, for any Borel measurable function $\va$ on $\R$, 
we can define an operator $\va(H_V)$ by letting
\[
	\va(H_V) = \int^{\infty}_{-\infty} \va(\la) \, d E_{H_V}(\la).
\]
This operator is initially defined on $L^2(\Om)$. 
The present paper is devoted to investigation of functional calculus for Schr\"odinger operators on $\Om$. 
More precisely, 
our purpose is to prove that 
$\va(H_V)$ is extended uniquely to a bounded linear operator on $L^p(\Om)$ for $1\le p\le \infty$ and 
that $L^p$-boundedness of $\va(\theta H_V)$ is uniform with respect to a parameter $\theta > 0$. \\

When $\Om = \R^d$, 
Simon considered the Kato class $K_{d}$ of potentials to reveal 
$L^p$-mapping properties of the Schr\"odinger operators $H_V$ and $e^{-tH_V}$ for $t>0$ 
(see \cite[Section A.2]{Si}). 
We now define a Kato class $K_d(\Om)$ on an 
open set $\Om$ as follows: 
We say that 
a real-valued measurable function $V$ on $\Om$ belongs to the class $K_{d}( \Om)$ if 
\[
	\lim_{r \rightarrow 0} \sup_{x \in \Om} \int_{\Om \cap \{|x-y|<r\}} \frac{|V(y)|}{|x-y|^{d-2}} \,dy = 0. 
\]
Throughout this paper, defining the ``Kato norm": 
\[
	\|V\|_{K_{d}(\Om)} := \sup_{x \in \Om} \int_{\Om} \frac{|V(y)|}{|x-y|^{d-2}} \,dy, 
\]
we impose an assumption on $V$ as follows: \\

\noindent{\bf Assumption A.} {\it 
Let $d \ge 3$. 
A real-valued measurable function $V(x)$ on 
$\Om$ is decomposed into $V = V_+ - V_-$, 
$V_{\pm} \ge 0$, belongs to $K_d(\Omega)$ 
and satisfies 
\begin{equation}\label{EQ:A-2}
\|V_-\|_{K_{d}(\Om)} < \frac{\pi^{d/2}}{\Gamma(d/2 - 1)},
\end{equation}
where $\Gamma(s)$ is the Gamma function for 
$s>0$.}\\

If the potential $V$ satisfies assumption A, it is proved in Proposition \ref{Prop:Self-adjoint} that  
$H_V$ is non-negative and self-adjoint on $L^2(\Om)$ (see \S 2). 
For a Borel measurable function $\va$ on $\mathbb{R}$, we define the operator $\va(H_V)$ on $L^2(\Om)$ by letting  
\begin{align*}
	\mathcal{D}(\va(H_V)) &= \Big\{ f \in L^2(\Om) \ \Big| \  \int^{\infty}_{0} |\va(\la)|^2 \,d {\langle E_{H_V}(\la)f, f \rangle}_{L^2(\Om)} < \infty \Big\}, \\
	\big{\langle\va(H_V) f , g \big\rangle}_{L^2(\Om)} &= \int^{\infty}_{0} \va(\la) \,d {\langle E_{H_V}(\la)f, g \rangle}_{L^2(\Om)},\quad \forall f \in \mathcal{D}(\va(H_V)),\ \forall g \in L^2(\Om), 
\end{align*}
where ${\langle \cdot, \cdot \rangle}_{L^2(\Om)}$ stands for the inner product in $L^2(\Om)$. 
Formally we write 
\begin{equation}\label{SD}
	\va(H_V) = \int^{\infty}_{0} \va(\la) d E_{H_V}(\la).
\end{equation}

\vspace{0.5cm}
In this paper we denote by $\mathscr{B}(X,Y)$ the space of all bounded
linear operators from a Banach space $X$ to another one $Y$.
When $X=Y$, we denote by 
$\mathscr{B}(X)=\mathscr{B}(X,X)$.
We use the notation $\mathcal{D}(T)$ for the domain of an operator $T$.\\

Denoting by $\mathscr{S}(\mathbb{R})$ 
the space of rapidly decreasing functions on $\mathbb{R}$, 
we shall prove here the following: 

\begin{thm}\label{Thm1.1}
Let $d\ge3$, $\va \in \mathscr{S}(\mathbb{R})$ and $1 \le p \le \infty$. 
Assume that the measurable potential $V$ satisfies assumption A. 
Then there exists a constant $C = C(d, \va) > 0$ such that   
\begin{equation}\label{Th1.1}
	\|\va(\theta H_V)\|_{\mathscr{B}(L^p(\Om))} \le C 
\end{equation}
for any $\theta > 0$.
\end{thm}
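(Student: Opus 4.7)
The plan is to combine a standard Schur reduction with the Gaussian heat kernel estimate for $e^{-tH_V}$ and an integral representation of $\va(\theta H_V)$ through the heat semigroup. Since $H_V$ is self-adjoint and $\va$ is bounded, the integral kernel $K_\theta(x,y)$ of $\va(\theta H_V)$ satisfies $K_\theta(x,y)=\overline{K_\theta(y,x)}$, and $L^2$-boundedness with norm $\le\|\va\|_{L^\infty}$ is immediate from the spectral theorem. Schur's test therefore reduces the entire theorem, uniformly in $1\le p\le\infty$, to the single estimate
\[
\sup_{\theta>0}\sup_{x\in\Om}\int_\Om |K_\theta(x,y)|\,dy\le C,
\]
so the task becomes extracting this kernel bound from semigroup information.

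The essential analytic input, which I expect is established in an earlier section of the paper as a consequence of assumption A, is the uniform Gaussian upper bound
\[
|e^{-tH_V}(x,y)|\le C_1\,t^{-d/2}\exp\p{-|x-y|^2/(C_2 t)},\qquad t>0,\ x,y\in\Om.
\]
In the Kato class this follows from the Feynman--Kac representation combined with Khasminskii's lemma applied to $V_-$; the smallness condition \eqref{EQ:A-2} on $\|V_-\|_{K_d(\Om)}$ keeps the resulting exponential moment uniformly bounded, and the Dirichlet boundary is absorbed because Brownian paths killed at $\pa\Om$ produce a kernel pointwise dominated by the free-space Gaussian. Integrating in $y$ yields $\int_\Om|e^{-tH_V}(x,y)|\,dy\le C$ uniformly in $t>0$ and $x\in\Om$, i.e.\ equiboundedness of the heat semigroup on $L^p(\Om)$.

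With these ingredients in place, I would produce a factorisation
\[
\va(\theta\la)=\int_0^\infty\psi_\va(s)\,e^{-s\theta\la}\,ds\qquad(\la\ge0),
\]
with $\|\psi_\va\|_{L^1(0,\infty)}$ finite and \emph{independent} of $\theta$. Given such a $\psi_\va$, the induced kernel $K_\theta(x,y)=\int_0^\infty\psi_\va(s)\,e^{-s\theta H_V}(x,y)\,ds$ obeys the required $L^1_y$ bound uniformly in $\theta$, since the substitution $t=s\theta$ absorbs the $\theta$-dependence of the Gaussian integral. Constructing $\psi_\va$ is the heart of the matter and is where I expect the main difficulty: Schwartz functions are not in general Laplace transforms of $L^1$ densities, so a direct inverse-Laplace step is unavailable. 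My approach would be a Mellin-inversion representation, combining $\la^{-z}=\Gamma(z)^{-1}\int_0^\infty t^{z-1}e^{-t\la}\,dt$ with Mellin inversion of $\va$ on a vertical line, where the rapid decay of $\widetilde\va(z)$ should overcome the Stirling growth of $1/\Gamma(z)$. Should this be awkward near $\la=0$, one can split $\va=\va_1+\va_2$ with $\va_2$ spectrally localised near the origin and treat $\va_2(\theta H_V)$ through the elementary identity $(1+\theta H_V)^{-N}=\Gamma(N)^{-1}\int_0^\infty t^{N-1}e^{-t}e^{-t\theta H_V}\,dt$, which already has the required form. Ensuring that all constants emerging from the contour shifts and any gluing remain $\theta$-independent is the most delicate point of the plan.
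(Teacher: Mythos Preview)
Your reduction via Schur's test and the heat kernel Gaussian bound is sound, and the paper does establish exactly the pointwise estimate you anticipate (Proposition~\ref{Prop:e-tHV}). The genuine gap is the representation step
\[
\va(\mu)=\int_0^\infty\psi_\va(s)\,e^{-s\mu}\,ds,\qquad \psi_\va\in L^1(0,\infty),
\]
which you correctly flag as the heart of the matter but do not resolve. Such a $\psi_\va$ does not exist for general $\va\in\Sscr(\R)$: the Laplace transform of any $L^1$ density extends holomorphically to $\{\Re\mu>0\}$, whereas $\va|_{[0,\infty)}$ need not be real-analytic (take $\va$ a smooth bump in $[1,2]$). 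Your Mellin fallback also fails quantitatively: on a vertical line $\Re z=c$, Stirling gives $|1/\Gamma(c+iy)|\asymp |y|^{1/2-c}e^{\pi|y|/2}$, while the Mellin transform of a smooth compactly supported $\va$ is only rapidly decreasing in $y$ (it is a Fourier transform of a $C_0^\infty$ function after a logarithmic change of variable), not exponentially decreasing. So $\tilde\va(z)/\Gamma(z)$ is not integrable, and the contour integral does not converge. The low-frequency splitting with $(1+\theta H_V)^{-N}$ does not help either, since the obstruction lies in the high-frequency piece $\va_1$.

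The paper avoids this obstruction entirely by the Jensen--Nakamura commutator method. Instead of the heat semigroup, it passes through the \emph{bounded} resolvent $R_{V,\theta}=(\theta H_V+M)^{-1}$ and writes $\va(\theta H_V)=\psi(R_{V,\theta})\,R_{V,\theta}^\beta$ with $\psi\in\Sscr(\R)$. The factor $R_{V,\theta}^\beta$ is handled by your Gaussian bound via the subordination formula $R_{V,\theta}^\beta=\Gamma(\beta)^{-1}\int_0^\infty t^{\beta-1}e^{-Mt}e^{-t\theta H_V}\,dt$, which \emph{is} a legitimate $L^1$-in-$t$ representation. For $\psi(R_{V,\theta})$ one uses the Fourier expansion $\psi(R_{V,\theta})=(2\pi)^{-1/2}\int_\R\hat\psi(t)e^{-itR_{V,\theta}}\,dt$, where now $e^{-itR_{V,\theta}}$ is a \emph{unitary group} generated by a bounded operator, and commutator estimates $\|[x_j,[x_j,\ldots,[x_j,e^{-itR_{V,\theta}}]\ldots]]\|_{\Bscr(L^2)}\le C_k\theta^{k/2}(1+|t|)^k$ yield polynomial off-diagonal decay of the kernel at the correct $\theta$-scale. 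This decay is packaged through scaled amalgam spaces $\ell^p(L^q)_\theta$, which replace the scaling identity $\va(-\theta\Delta)f=\big(\va(-\Delta)f(\theta^{1/2}\cdot)\big)(\theta^{-1/2}\cdot)$ that is unavailable on a general open set. The upshot is that one never needs $\va$ itself to be a Laplace transform; only the resolvent power, which trivially is.
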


Let us give a few remarks on Theorem \ref{Thm1.1}. 
We have restricted the result in this theorem 
to high space dimensions.
So, one would expect the result to hold also 
for low space dimensions, 
i.e., $d=1,2$. However, in the present paper 
we use 
the pointwise estimates for kernel of $e^{-tH_V}$  
on $\R^d$ that D'Ancona and Pierfelice proved for 
$d\ge 3$ (see \cite{DP}).
Hence low dimensional cases will be a 
future problem. 
When $V=0$, 
Theorem \ref{Thm1.1} also holds in the cases 
$d = 1,2$ 
by using the pointwise estimates for 
classical heat kernel of $e^{t\De}$. 

One can easily see that $\va(H_V)$ is bounded 
on $L^2(\Om)$ 
via direct application of the spectral 
resolution \eqref{SD}. 
From the point of view of harmonic analysis, 
it would be important to obtain 
$L^p$-boundedness ($p \neq 2$). 
For instance,  
Theorem \ref{Thm1.1} 
provides a generalization of 
$L^p$-boundedness for the Fourier multiplier 
in $\R^d$: 
\[
\big\| \Fscr^{-1}\big[\hat{\va}
(\theta\, |\cdot|^2) \hat{f}\,\big] 
\big\|_{L^p(\R^d)} \le C_p 
\|f\|_{L^p(\R^d)}, \quad \forall \theta>0, 
\]
where $\va \in \Sscr({\R^d})$,  
$\hat{}$ denotes the Fourier transform, and 
$\Fscr^{-1}$ is the Fourier inverse transform. 
$L^p$-boundedness of $\va(\theta H_V)$ 
also plays a fundamental role in defining 
the Besov spaces generated by $H_V$
(see, e.g., \cite{DP,GV,JN1}). 
Thus Theorem \ref{Thm1.1} would be a starting point of the study of spectral multiplier and Besov spaces on open sets. \\

When $\Om = \R^d$, there are some known results on uniform $L^p$-estimates for $\va(\theta H_V)$ with respect to $\theta$. 
For $0 <\theta\le1$, Jensen and Nakamura proved the uniform estimates for $d \ge 1$, 
under the assumption
that the potential $V = V_+ - V_-$, $V_{\pm} \ge 0$, satisfies $V_+ \in K^{\mathrm{loc}}_d(\R^d)$ and $V_- \in K_{d}(\R^d)$ (see \cite{JN1,JN2}). 
Here $K^{\mathrm{loc}}_d(\R^d)$ is the local Kato class, which is the space of all $f \in L^1_\mathrm{loc} (\R^d)$ such that $f$ belongs to the Kato class on any compact set in $\R^d$. 
For $\theta >0$, Georgiev and Visciglia proved the uniform estimates 
under the assumption 
that the potential $V$ satisfies 
\[
0\le V(x)\le \frac{C}{|x|^2(|x|^\ep + |x|^{-\ep})}\quad (C>0,\ \ep>0)
\]
when $d=3$ (see \cite{GV}).  
D'Ancona and Pierfelice proved the uniform estimates for $d\ge3$, 
under the assumption 
that the potential $V=V_+ -V_-$, $V_{\pm} \ge 0$, satisfies $V_\pm \in K_d(\R^d)$ and 
\[
\|V_-\|_{K_d(\R^d)} < \frac{\pi^{d/2}}{\Gamma(d/2-1)}
\]
(see \cite{DP}). 
As far as we know, 
Theorem \ref{Thm1.1} is new in the sense that 
there would not be no results on $L^p$-estimates for $\va(H_V)$ in open sets. \\

Let us overview the strategy of proof of 
Theorem \ref{Thm1.1}. For the sake of 
simplicity, we consider the case $V \equiv 0$, 
since the case $V\not\equiv 0$ is similar. 
The original idea of proof of $L^p$-boundedness 
goes back to Jensen and Nakamura \cite{JN2}. 
The method for the~boundedness of 
$\varphi (-\Delta)$ 
is to use the amalgam spaces $\ell^p(L^q)$, 
pointwise estimates for the kernel of 
$e^{-t\Delta}$ 
and the commutator estimates for 
$-\Delta$ and polynomials. 
As to the uniform boundedness of 
$\varphi (-\theta \Delta)$ 
with respect to $\theta$, 
the estimates for operator 
$\varphi (-\theta \Delta)$ are reduced to 
those on $\varphi (-\Delta)$ via the 
following equality 
\begin{equation}\label{EQ:scale}
\left(\va(-\theta \De)f\right)(x)=\Big(\va(-\De)\left(f(\theta^{1/2}\cdot)\right)\Big)(\theta^{-1/2}x),\quad x\in\R^d,\quad \theta>0
\end{equation}
(see \cite{DP,GV,JN1}).
There, scaling invariance of $\R^d$, i.e., $\R^d=\theta^{1/2}\R^d$, plays an 
essential role in the argument. 
On the other hand, when one tries to get \eqref{EQ:scale} on open sets $\Om \subsetneqq \R^d$, 
the scaling invariance breaks down, i.e., $\Om\ne\theta^{1/2}\Om$. 
To avoid this problem, 
we shall introduce the scaled amalgam spaces $\ell^p(L^q)_{\theta}(\Om)$ 
to estimate the operator norm of $\varphi (-\theta \Delta)$ directly. 
A scale exponent $1/2$ in $\theta^{1/2}$ of the spaces $\ell^p(L^q)_{\theta}(\Om)$ is chosen 
to fit the scale exponent of the operator $\varphi (-\theta \Delta)$; thus 
we define the scaled amalgam spaces as follows:

\begin{defn}[Scaled amalgam spaces $\ell^p(L^q)_{\theta}$]
\label{Def1.2}
Let $1 \le p,q\le \infty$ and $\theta > 0$. 
The space $\ell^p(L^q)_{\theta}$ is defined by letting
\[
\ell^p(L^q)_{\theta} = \ell^p(L^q)_{\theta}(\Om) := \Big\{ f \in L^q_{\mathrm{loc}}(\overline{\Om}) \Big| 
\sum_{n \in \mathbb{Z}^d} \|f\|^p_{L^q(C_{\theta}(n))} < \infty \Big\}
\] 
with norm 
\[
\|f\|_{\ell^p(L^q)_{\theta}}
=\left\{
\begin{aligned}
& \Big( \sum_{n \in \mathbb{Z}^d} 
\|f\|^p_{L^q(C_{\theta}(n))} \Big)^{1/p}
& \quad \text{for $1\le p<\infty$,}\\
& \sup_{n \in \mathbb{Z}^d} 
\|f\|_{L^q(C_{\theta}(n))}
& \quad \text{for $p=\infty$,}
\end{aligned}\right.
\]
where $C_{\theta}(n)$ is the cube centered at 
$\theta^{1/2} n \in \theta^{1/2} 
\mathbb{Z}^d$ with side length $\theta^{1/2}$;
\[
C_{\theta}(n)=\Big\{ x=(x_1,x_2,\cdots,x_d) \in\Om \ \Big|
\max_{j = 1, \cdots, d} 
|x_j - \theta^{1/2} n_j| 
\le \frac{\theta^{1/2}}{2} \Big\}.
\]
Here we adopt the Euclidean norm 
for $n=(n_1,n_2,\ldots, n_d)\in\Z^d$; 
\[
|n|=\sqrt{n^2_1+n^2_2+\cdots+n^2_d}.
\]
\end{defn}
It can be checked that $\ell^p(L^q)_{\theta}$ 
is a Banach space with norm 
$\|\cdot\|_{\ell^p(L^q)_{\theta}}$ having 
the property that
\[
\ell^p(L^q)_{\theta} \hookrightarrow 
L^p(\Om) \cap L^q(\Om)
\]
for $1 \le p \le q \le \infty$. \\

This paper is organized as follows. In \S 2 
the self-adjointness of 
Schr\"odinger operator $H_V$ is shown. 
In \S 3 we prepare the pointwise estimate for 
the kernel of $e^{-t H_V}$. 
\S4 is devoted to the uniform estimates 
in $\ell^p(L^q)_\theta$ for the resolvent of 
$H_V$. In \S5 we derive 
the commutator estimates for our problem 
in the open set $\Om$. 
In \S 6 the proof of Theorem \ref{Thm1.1} 
is given. 
As a by-product of Theorem \ref{Thm1.1}, 
$L^p$-$L^q$-boundedness for $\va(H_V)$ is 
proved in \S 7.


\section{Self-adjointness of Schr\"odinger operators}
In this section we show that operator $H_V$ is self-adjoint and non-negative under 
assumption A. \\
 
Our purpose is to prove the following.

\begin{prop}\label{Prop:Self-adjoint}
Let $d \ge 3$. 
Assume that the measurable potential $V$ is a real-valued function on $\Om$ 
and satisfies 
$V = V_+ - V_-$, $V_{\pm} \ge 0$ such that 
$V_{\pm} \in K_{d}(\Om)$ and
\begin{equation}\label{EQ:katonorm1}
	\|V_-\|_{K_{d}(\Om)} < \frac{4\pi^{d/2}}{\Gamma(d/2 - 1)}. 
\end{equation}
Let $H_V$ be the operator with domain 
\[
\mathcal{D}(H_V) = \big\{ u \in H^1_0(\Om)\, \big| \, H_V u \in L^2(\Om) \big\},
\]
so that 
\begin{equation}\label{EQ:Schrodinger}
	\big{\langle H_V u , v \big\rangle}_{L^2(\Om)} = 
	\int_{\Om} \nabla u(x)\cdot \overline{\nabla v(x)} \, dx + \int_{\Om} V(x)u(x)\overline{v(x)} \, dx 
\end{equation}
for any $u \in \mathcal{D}(H_V)$ and $v \in H^1_0(\Om)$. 
Then $H_V$ is non-negative and self-adjoint on $L^2(\Om)$. 
\end{prop}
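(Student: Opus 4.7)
The plan is to construct $H_V$ by the KLMN theory of closed symmetric semibounded quadratic forms. Let $q_0(u,v) := \int_\Om \nabla u \cdot \overline{\nabla v}\, dx$ with $\mathcal D(q_0) := H^1_0(\Om)$; this is the closed non-negative form canonically associated with the Dirichlet Laplacian $H$. I would introduce the perturbed form
\[
q_V(u,v) := q_0(u,v) + \int_\Om V(x)\, u(x) \overline{v(x)}\, dx, \qquad \mathcal D(q_V) := H^1_0(\Om),
\]
show that it is closed and non-negative, and then identify the self-adjoint operator produced by the first representation theorem with the operator $H_V$ in the statement.

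The main obstacle is the Kato-type form inequality
\[
\int_\Om |V(x)| |u(x)|^2\, dx \le \frac{\Gamma(d/2-1)}{4\pi^{d/2}} \|V\|_{K_{d}(\Om)} \int_\Om |\nabla u(x)|^2\, dx, \qquad u \in H^1_0(\Om),
\]
for $V \in K_d(\Om)$. I would prove this by writing $\int |V||u|^2 = \||V|^{1/2} u\|_{L^2(\Om)}^2$ and factoring $u = (-\De)^{-1/2}(-\De)^{1/2} u$, reducing matters to bounding the operator norm of $|V|^{1/2}(-\De)^{-1}|V|^{1/2}$ on $L^2(\Om)$. Since the Dirichlet heat kernel on $\Om$ is dominated pointwise by the Gaussian kernel on $\R^d$, the Green kernel of $-\De$ on $\Om \times \Om$ is bounded by $\frac{\Gamma(d/2-1)}{4\pi^{d/2}}|x-y|^{-(d-2)}$. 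A Schur test with weight $w = |V|^{1/2}$ then yields the claim, using exactly the definition of $\|V\|_{K_d(\Om)}$ (the requirement $w > 0$ is handled by approximating $|V|$ with $|V|+\varepsilon$ and sending $\varepsilon \to 0$).

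Granting this inequality and setting $c_d := \Gamma(d/2-1)/(4\pi^{d/2})$, the hypothesis \eqref{EQ:katonorm1} gives $c_d \|V_-\|_{K_d(\Om)} < 1$, hence
\[
q_V(u,u) \ge \big(1 - c_d \|V_-\|_{K_d(\Om)}\big) \int_\Om |\nabla u|^2\, dx + \int_\Om V_+ |u|^2\, dx \ge 0.
\]
In particular $H^1_0(\Om)$ lies in the form domains of both $V_\pm$, the graph norm of $q_V$ is equivalent to the $H^1_0$-norm, and so $q_V$ is closed. The representation theorem for closed semibounded forms then produces a unique non-negative self-adjoint operator $\tilde H_V$ with $\mathcal D(\tilde H_V) \subset H^1_0(\Om)$ satisfying $\langle \tilde H_V u, v\rangle_{L^2(\Om)} = q_V(u,v)$ for every $u \in \mathcal D(\tilde H_V)$ and $v \in H^1_0(\Om)$.

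It remains to identify $\tilde H_V$ with $H_V$. If $u \in \mathcal D(H_V)$, testing the identity $H_V u = -\De u + V u \in L^2(\Om)$ against $v \in C_c^\infty(\Om)$ and extending by density gives $q_V(u,v) = \langle H_V u, v\rangle_{L^2(\Om)}$ for all $v \in H^1_0(\Om)$, so $u \in \mathcal D(\tilde H_V)$ and $\tilde H_V u = H_V u$. Conversely, if $u \in \mathcal D(\tilde H_V)$ then $u \in H^1_0(\Om)$ and the defining identity forces $-\De u + V u = \tilde H_V u \in L^2(\Om)$ in the sense of distributions (the potential term being $L^1_{\mathrm{loc}}$ by the Kato class assumption), so $u \in \mathcal D(H_V)$. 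This gives $H_V = \tilde H_V$, which is self-adjoint and non-negative on $L^2(\Om)$, as required.
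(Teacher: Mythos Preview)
Your overall strategy---construct $H_V$ as the self-adjoint operator associated with the closed non-negative form $q_V$ on $H^1_0(\Om)$ and then invoke the first representation theorem---is exactly the paper's, and your Schur-test derivation of the sharp bound $\int_\Om V_- |u|^2 \le c_d \|V_-\|_{K_d(\Om)} \|\nabla u\|^2$ is a clean alternative to the paper's $TT^*$ argument with the massive resolvent $(-\De+M)^{-1}$.

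There is, however, a genuine gap in your treatment of $V_+$. Your Kato inequality carries the constant $c_d\|V\|_{K_d(\Om)}$, but membership in $K_d(\Om)$ does \emph{not} force $\|V\|_{K_d(\Om)} < \infty$: the class is defined by the condition $\sup_x \int_{\Om\cap\{|x-y|<r\}} |V(y)|\,|x-y|^{2-d}\,dy \to 0$ as $r\to 0$, and for instance $V_+ \equiv 1$ on an unbounded $\Om$ lies in $K_d(\Om)$ yet has infinite Kato norm. So from your inequality alone you cannot conclude that $H^1_0(\Om)$ sits inside the form domain of $V_+$, nor that $q_V(u,u) \le C\|u\|_{H^1}^2$, which is the upper half of the norm equivalence you need for closedness. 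The paper fills precisely this gap (its Lemma~\ref{Lem:V}) by proving that for every $\varepsilon > 0$ there exists $b_\varepsilon$ with
\[
\int_\Om V_+ |u|^2\,dx \le \varepsilon \|\nabla u\|_{L^2(\Om)}^2 + b_\varepsilon \|u\|_{L^2(\Om)}^2:
\]
one replaces $(-\De)^{-1}$ by $(-\De + M)^{-1}$, splits the resulting kernel integral into $|x-y|<r$ and $|x-y|\ge r$, uses the Kato-class limit condition on the near part and the exponential decay of the massive Green kernel on the far part, and then chooses $r$ small and $M$ large. Once this relative form bound for $V_+$ is available, the rest of your argument goes through unchanged.
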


We need a notion of quadratic forms on 
Hilbert spaces (see p.276 in Reed and Simon \cite{RS1}).

\begin{defn}\label{Def:Quadratic}
Let $\Hscr$ be a Hilbert space with the norm $\|\cdot\|$. 
A quadratic form is a map $q : \Qcal(q)\times\Qcal(q)\rightarrow\C$, where $\Qcal(q)$ is a dense linear subset in $\Hscr$ called the form domain, 
such that $q(\cdot,v)$ is conjugate linear and $q(u,\cdot)$ is linear for $u,v\in\Qcal(q)$. 
We say that $q$ is symmetric if $q(u,v)=\overline{q(v,u)}$. 
A symmetric quadratic form $q$ is non-negative if $q(u,u)\ge0$ for any $u\in\Qcal(q)$. 
A non-negative quadratic form $q$ is closed if 
$\Qcal(q)$ is complete with respect to the norm:
\begin{equation}\label{EQ:1-norm}
\|u\|_{+1} := \sqrt{q(u,u)+\|u\|^2}. 
\end{equation}
\end{defn}

\vspace{5mm}

The proof of Proposition \ref{Prop:Self-adjoint} is done by using the following two lemmas. 

\begin{lem}\label{Lem:Self-adjoint}
Let $\Hscr$ be a Hilbert space with the inner product $\langle \cdot,\cdot\rangle$, and let 
$q : \Qcal(q)\times \Qcal(q)\rightarrow \C$ be a densely defined semi-bounded closed quadratic form. 
Then there exists a self-adjoint operator $T$ on $\Hscr$ uniquely such that 
\begin{equation*}
\begin{cases}
	\Dcal(T)=\left\{u\in\Qcal(q)\,|\, \exists \, w_u \in \Hscr \text{ such that } q(u,v)=\langle w_u,v\rangle,\  \forall v\in\Qcal(q)\right\}, \\
	Tu=w_u,\quad u\in\Dcal(T).
\end{cases}
\end{equation*}
\end{lem}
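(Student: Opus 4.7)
The plan is to apply the standard ``second representation theorem'': construct $T$ as a shifted inverse of a bounded positive self-adjoint operator obtained via the Riesz representation theorem in the form Hilbert space. First, by semi-boundedness I may choose $c > 0$ so that $q_c(u,v) := q(u,v) + c\langle u,v\rangle$ satisfies $q_c(u,u) \ge \|u\|^2$ on $\Qcal(q)$. A direct check shows that $T$ satisfies the conclusion for $q$ if and only if $T + cI$ satisfies it for $q_c$, so I may assume without loss of generality that $q(u,u) \ge \|u\|^2$ for every $u\in\Qcal(q)$.

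Under this positivity, $\tilde q(u,v) := q(u,v) + \langle u,v\rangle$ is an inner product on $\Qcal(q)$ whose induced norm is $\|\cdot\|_{+1}$ in \eqref{EQ:1-norm}. Closedness of $q$ means precisely that $(\Qcal(q),\tilde q)$ is itself a Hilbert space, and $\|u\| \le \|u\|_{+1}$ gives a contractive inclusion $\Qcal(q)\hookrightarrow\Hscr$. For each $f\in\Hscr$, the conjugate linear functional $v\mapsto\langle f,v\rangle$ is bounded on $(\Qcal(q),\tilde q)$, so the Riesz representation theorem furnishes a unique $Bf\in\Qcal(q)$ with
\[
\tilde q(Bf,v) = \langle f,v\rangle \quad \text{for all } v\in\Qcal(q).
\]
Routine manipulations then show that $B:\Hscr\to\Hscr$ is bounded with $\|B\|_{\Bscr(\Hscr)}\le 1$, self-adjoint, non-negative (since $\langle Bf,f\rangle=\tilde q(Bf,Bf)\ge 0$), and injective by density of $\Qcal(q)$ in $\Hscr$; hence $B$ has dense range.

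I would then set $A:=B^{-1}$ with $\Dcal(A):=\operatorname{Range}(B)$. The classical fact that the inverse of a bounded, injective, self-adjoint operator with dense range is self-adjoint makes $A$ self-adjoint on $\Hscr$, and so is $T:=A-I$ with $\Dcal(T)=\Dcal(A)$. For $u\in\Dcal(T)$, writing $u=Bf$ gives $Au=f$, and
\[
q(u,v) = \tilde q(u,v) - \langle u,v\rangle = \langle Au - u,v\rangle = \langle Tu,v\rangle
\]
for every $v\in\Qcal(q)$, showing $u$ lies in the set characterizing $\Dcal(T)$ in the lemma with $w_u = Tu$. Conversely, if $u\in\Qcal(q)$ admits $w_u\in\Hscr$ with $q(u,v)=\langle w_u,v\rangle$ for all $v\in\Qcal(q)$, then $\tilde q(u,v)=\langle u+w_u,v\rangle$, which forces $u=B(u+w_u)\in\Dcal(A)$; this gives the reverse inclusion and $Tu = w_u$. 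Uniqueness is immediate because both $\Dcal(T)$ and the value $Tu$ are prescribed explicitly, and density of $\Qcal(q)$ in $\Hscr$ makes $w_u$ unambiguous.

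The main delicate point is confirming that $A=B^{-1}$ is genuinely self-adjoint rather than merely symmetric. This, together with the completeness of $(\Qcal(q),\|\cdot\|_{+1})$ (which is exactly the closedness of $q$), are the two places where the hypotheses actually enter, and I would invoke the standard abstract result on inverses of bounded, injective, self-adjoint operators with dense range rather than reprove it.
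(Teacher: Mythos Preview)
The paper does not supply its own proof of this lemma; it simply cites Theorem~VIII.15 of Reed--Simon \cite{RS1}. Your argument is correct and is essentially the standard proof one finds there: shift by a constant to reduce to a coercive form, use the Riesz representation theorem in the form-Hilbert space $(\Qcal(q),\|\cdot\|_{+1})$ to produce a bounded, injective, non-negative self-adjoint operator $B$, and set $T=B^{-1}-I$. The two places where the hypotheses enter---closedness of $q$ (so that $(\Qcal(q),\|\cdot\|_{+1})$ is complete) and the fact that the inverse of a bounded injective self-adjoint operator is self-adjoint---are exactly the ones you flag. One cosmetic slip: under the paper's sesquilinearity convention (second slot linear, first slot conjugate linear) the functional $v\mapsto\langle f,v\rangle$ is linear rather than conjugate linear, but this has no bearing on the argument.
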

We note that $\Dcal(T)$ can be simply written as
\[
\Dcal(T)=\left\{u\in\Qcal(q)\,|\, Tu\in \Hscr\right\}.
\]
For the proof of Lemma \ref{Lem:Self-adjoint}, see \cite[Theorem VIII.15]{RS1}. \\

The following lemma states that $V_{\pm}$ are relatively form bounded with respect to the 
Dirichlet Laplacian $-\De$. 

\begin{lem}\label{Lem:V}
Let $V_+$ and $V_-$ be as in 
Proposition \ref{Prop:Self-adjoint}. 
Then for any $\ep>0$, there exists a 
constant $b_{\ep}>0$ such that 
the following estimates hold{\rm :} 
\begin{equation}\label{EQ:V+}
	\int_{\Om} V_+ (x) |u(x)|^2 \, dx 
	\le \ep \|\nabla u\|^2_{L^2(\Om)}
+ b_{\ep} \|u\|^2_{L^2(\Om)},
\end{equation}
\begin{equation}\label{EQ:V-}
	\int_{\Om} V_- (x) |u(x)|^2 \, dx 
\le \frac{\|V_-\|_{K_{d}(\Om)}\Gamma(d/2-1)}{4\pi^{d/2}} \|\nabla u\|^2_{L^2(\Om)}
\end{equation}
for any $u \in H^1_0(\Om)$.
\end{lem}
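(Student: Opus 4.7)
The plan is to recast both estimates as $L^2$-operator-norm bounds for the integral operators $V_\pm^{1/2}(-\De+\la)^{-1}V_\pm^{1/2}$ (with $\la=0$ for \eqref{EQ:V-} and $\la$ large for \eqref{EQ:V+}) and to verify these bounds by Schur's test applied to the explicit Newton/Yukawa kernel. Throughout I work with $u\in C_c^\infty(\Om)$, dense in $H^1_0(\Om)$; extending $u$ by zero to $\tilde u\in H^1(\R^d)$ preserves $\|\nabla u\|_{L^2(\Om)}$, and extending $V_\pm$ by zero makes them vanish outside $\Om$.

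For \eqref{EQ:V-}, setting $w:=(-\De)^{1/2}\tilde u$ so that $\|w\|_{L^2(\R^d)}=\|\nabla u\|_{L^2(\Om)}$, one has
\[
\int_\Om V_-|u|^2\,dx=\|V_-^{1/2}(-\De)^{-1/2}w\|_{L^2}^2\le\|V_-^{1/2}(-\De)^{-1}V_-^{1/2}\|_{L^2\to L^2}\,\|\nabla u\|_{L^2(\Om)}^2.
\]
The operator $V_-^{1/2}(-\De)^{-1}V_-^{1/2}$ has the positive symmetric kernel $\frac{\Gamma(d/2-1)}{4\pi^{d/2}}\frac{V_-^{1/2}(x)V_-^{1/2}(y)}{|x-y|^{d-2}}$, which is supported on $\Om\times\Om$. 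Schur's test with weight $\phi=V_-^{1/2}$ (restricting, if needed, to the support $\{V_->0\}$ where $\phi>0$) produces
\[
\int_\Om\frac{\Gamma(d/2-1)}{4\pi^{d/2}}\frac{V_-^{1/2}(x)V_-(y)}{|x-y|^{d-2}}\,dy\le\frac{\Gamma(d/2-1)\|V_-\|_{K_d(\Om)}}{4\pi^{d/2}}V_-^{1/2}(x),
\]
and the matching bound holds by symmetry of the kernel, yielding exactly the constant in \eqref{EQ:V-}.

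For \eqref{EQ:V+}, running the same argument with $V_+$ gives only the fixed constant $\frac{\Gamma(d/2-1)\|V_+\|_{K_d(\Om)}}{4\pi^{d/2}}\|\nabla u\|^2$, not an arbitrarily small $\ep\|\nabla u\|^2$. To introduce $\ep$, I replace $-\De$ by $-\De+\la$ for large $\la$; this exchanges $\|\nabla u\|^2$ for $\|\nabla u\|^2+\la\|u\|^2$, and the Yukawa resolvent kernel $G_\la$ satisfies $G_\la(x,y)\le C_d\,e^{-\sqrt{\la}\,|x-y|/2}|x-y|^{-(d-2)}$ via an AM-GM estimate on the exponent $\la t+|x-y|^2/(4t)$ inside the heat-kernel representation $G_\la=\int_0^\infty e^{-\la t}p_t\,dt$. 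Splitting $\int_\Om G_\la(x,y)V_+(y)\,dy$ at $|x-y|=r$: the Kato class condition forces the near-field below $\ep/2$ uniformly in $x$ for $r$ sufficiently small, and the exponential factor forces the far-field, bounded by a constant times $e^{-\sqrt{\la}\,r/2}\|V_+\|_{K_d(\Om)}$, below $\ep/2$ for $\la$ sufficiently large. Schur's test with $\phi=V_+^{1/2}$ then yields $\|V_+^{1/2}(-\De+\la)^{-1}V_+^{1/2}\|\le\ep$, hence
\[
\int V_+|u|^2\,dx\le\ep\bigl(\|\nabla u\|^2+\la\|u\|^2\bigr),
\]
which is \eqref{EQ:V+} with $b_\ep=\ep\la$.

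The main obstacle is the selection of $\la$ in \eqref{EQ:V+}: one must simultaneously exploit the Kato class condition (which controls only the singular near-field) and the Gaussian-in-$\sqrt{\la}\,|x-y|$ decay of $G_\la$ (which turns the finite Kato norm into far-field smallness), while keeping every bound uniform in $x\in\Om$ despite any irregularity of $V_+$ or of $\pa\Om$. A secondary technical point is the standard approximation needed to apply Schur's test with the possibly-degenerate weight $V_\pm^{1/2}$, handled by replacing $V_\pm$ with $V_\pm+\eta\rho$ for $\rho\in K_d(\Om)$ strictly positive and passing to the limit $\eta\downarrow 0$.
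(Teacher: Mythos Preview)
Your argument is correct and follows essentially the same route as the paper's proof: both extend by zero to $\R^d$, reduce to bounding the operator norm of $V_\pm^{1/2}(-\De+\la)^{-1}V_\pm^{1/2}$ (which is $\|TT^*\|$ for $T=V_\pm^{1/2}(-\De+\la)^{-1/2}$), and then control this via the explicit Newton/Yukawa kernel together with the near-far splitting that exploits the Kato class condition and exponential decay. The only cosmetic differences are that the paper carries out the Cauchy--Schwarz--Fubini step that proves Schur's test inline rather than citing it, and treats $d=3$ explicitly before invoking Simon's kernel bounds for $d>3$, whereas you handle general $d$ uniformly through the heat-kernel representation of $G_\la$.
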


\begin{proof} The proof is similar to 
that of Lemma 3.1 from \cite{DP}.
Let $u \in C^{\infty}_0(\Om)$, and let 
$\tilde{u}$ and $\tilde{V}_{\pm}$ be the 
zero extensions of $u$ and $V_{\pm}$ to 
$\R^d$, respectively. 
First, we prove that for any $\ep > 0$, 
there exists a constant $b_{\ep} > 0$ such that
\begin{equation}\label{EQ:DAncona}
\int_{\R^d} \tilde{V}_+ (x)|\tilde{u}(x)|^2 \, dx 
	\le \ep \|\nabla \tilde{u}\|^2_{L^2(\R^d)}
	+ b_{\ep}\|\tilde{u}\|^2_{L^2(\R^d)}. 
\end{equation}
We divide the proof of \eqref{EQ:DAncona} 
into two cases: $d=3$ and $d>3$.
When $d=3$, 
the inequality 
\eqref{EQ:DAncona} is equivalent to 
\begin{align*}
\int_{\R^3} \tilde{V}_+(x)|\tilde{u}(x)|^2 \, dx 
\le & \ep \langle \tilde{u},-\De\tilde{u}\rangle_{L^2(\R^3)}+b_{\ep}
\|\tilde{u}\|^2_{L^2(\R^3)}\\
=& \ep\left\|\left(H_0+\frac{b_{\ep}}{\ep}\right)^{1/2}\tilde{u}\right\|^2_{L^2(\R^3)},
\end{align*}
where $H_0=-\De$ is the self-adjoint operator with domain $H^2(\R^3)$. 
Put
\[
v = \left(H_0+
\frac{b_{\ep}}{\ep}\right)^{1/2}\tilde{u}. 
\]
Then 
the estimate \eqref{EQ:DAncona} takes the following form: 
\[
\left\|\tilde{V}^{1/2}_+\left(H_0+\frac{b_{\ep}}{\ep}\right)^{-1/2}v\right\|^2_{L^2(\R^3)} \le \ep\|v\|^2_{L^2(\R^3)}. 
\]
This estimate can be obtained if we show that 
\begin{equation}\label{EQ:DAncona2}
\|TT^*\|_{\Bscr(L^2(\R^3))} \le \ep,
\end{equation}
where we set 
\[
T:=\tilde{V}^{1/2}_
+\left(H_0+\frac{b_{\ep}}{\ep}\right)^{-1/2}.
\] 
Thus, it suffices to show that for any $\ep>0$, 
there exists a constant $b_{\ep} > 0$ such that the estimate \eqref{EQ:DAncona2} holds. 
Let $\ep >0$ be fixed and $b>0$. 
By using the formula:
\[
\left(H_0+\frac{b}{\ep}\right)^{-1}v(x) = \frac{1}{4\pi}\int_{\R^3}
\frac{e^{-\sqrt{\frac{b}{\ep}}|x-y|}}{|x-y|}v(y)\,dy
\]
and Schwarz inequality, 
we estimate 
\begin{align*}
&\|TT^*v\|^2_{L^2(\R^3)}\\
=&\left\|\tilde{V}^{1/2}_+\left(H_0
+\frac{b}{\ep}\right)^{-1}\tilde{V}^{1/2}_+v\right\|^2_{L^2(\R^3)}\\
=&\frac{1}{(4\pi)^2}\int_{\R^3}\tilde{V}_+(x)\left|\int_{\R^3}\frac{e^{-\sqrt{\frac{b}{\ep}}|x-y|}}{|x-y|}\tilde{V}^{1/2}_+(y)v(y)\,dy\right|^2\,dx\\
\le&\frac{1}{(4\pi)^2}\int_{\R^3}\tilde{V}_+(x)
\left(\int_{\R^3}\frac{e^{-\sqrt{\frac{b}{\ep}}|x-y|}}{|x-y|}\tilde{V}_+(y)\,dy\right)
\left(\int_{\R^3}\frac{e^{-\sqrt{\frac{b}{\ep}}|x-y|}}{|x-y|}|v(y)|^2\,dy\right)\,dx.
\end{align*}
Now we estimate the first integral on the right. 
We split the integral as follows:
\begin{align*}
\int_{\R^3}\frac{e^{-\sqrt{\frac{b}{\ep}}|x-y|}}{|x-y|}\tilde{V}_+(y)\,dy&=\int_{|x-y|<r}+\int_{|x-y|\ge r}\\
&=:I_1+I_2
\end{align*}
for any $r>0$. 
Let $\de>0$ be fixed. 
Then, if we choose $r>0$ small enough, we have $I_1\le\de$, since $V_+\in K_d(\Om)$. 
Then, choosing $b=b_\de>0$ large enough, we have $I_2\le\de$. 
Thus we obtain 
\begin{equation}\label{EQ:integralV+}
\int_{\R^3}\frac{e^{-\sqrt{\frac{b_\de}{\ep}}|x-y|}}{|x-y|}\tilde{V}_+(y)\,dy\le 2\de.
\end{equation}
Using this estimate, we can estimate 
\[
\|TT^*v\|^2_{L^2(\R^3)}\le \frac{2\de}{(4\pi)^2}\int_{\R^3}\tilde{V}_+(x)
\left(\int_{\R^3}\frac{e^{-\sqrt{\frac{b_\de}{\ep}}|x-y|}}{|x-y|}|v(y)|^2\,dy\right)\,dx. 
\]
Moreover, by using Fubini-Tonelli theorem and the inequality \eqref{EQ:integralV+} once more, 
we estimate
\begin{align*}
\|TT^*v\|^2_{L^2(\R^3)}\le& \frac{2\de}{(4\pi)^2}\int_{\R^3}
\left(\int_{\R^3}\frac{e^{-\sqrt{\frac{b_\de}{\ep}}|x-y|}}{|x-y|}\tilde{V}_+(x)\,dx\right)|v(y)|^2\,dy\\
\le&\left(\frac{2\de}{4\pi}\right)^2\|v\|^2_{L^2(\R^3)}.
\end{align*}
Thus, by choosing $\delta=2\pi \ep$, 
we get \eqref{EQ:DAncona2}, which implies \eqref{EQ:DAncona} for $d=3$. 

When $d>3$, we can also prove the estimate \eqref{EQ:DAncona} in the same argument as in the case when $d=3$, 
if we note that 
the kernel $K_M(x)$ of $(-\De+M)^{-1}$ for $M>0$ satisfies 
\[
|K_M(x)|\le \frac{\Gamma(d/2-1)}{4\pi^{d/2}}\frac{1}{|x|^{d-2}}
\quad \text{and}\quad \lim_{M\rightarrow+\infty}
\sup_{|x|>r}e^{|x|}K_M(x)=0
\]
for each $r>0$ 
(see \cite[p.454]{Si}). 
Indeed, we can perform the argument involving $H_0+\frac{b_{\ep}}{\ep}$ by using the previous 
asymptotics, and as a result, we get also \eqref{EQ:DAncona}. 

Based on \eqref{EQ:DAncona}, 
we can prove 
the required estimates 
\eqref{EQ:V+}. 
In fact, we estimate, by 
using \eqref{EQ:DAncona},
\begin{align*}
	\int_{\Om} V_+ (x) |u(x)|^2 \, dx& = \int_{\R^d} \tilde{V}_+ (x) |\tilde{u}(x)|^2 \, dx \\
	& \le \ep \|\nabla\tilde{u}\|^2_{L^2(\R^d)}
	+ b_{\ep}\|\tilde{u}\|^2_{L^2(\R^d)} \\
	& = \ep \|\nabla u\|^2_{L^2(\Om)}
	+ b_{\ep}\|u\|^2_{L^2(\Om)}. 
\end{align*}
As a consequence, by density argument, the inequality \eqref{EQ:V+} is proved. 

The proof of \eqref{EQ:V-} is almost 
identical to that of \eqref{EQ:V+} by 
regarding $b_\ep$ as $0$. 
The only difference is the estimate \eqref{EQ:integralV+}. 
Instead of \eqref{EQ:integralV+}, we 
can apply the following estimate:
\begin{align*}
\int_{\R^3}\frac{\tilde{V}_-(y)}{|x-y|}\,dy
=&\int_{\Omega}\frac{V_-(y)}{|x-y|}\,dy\\
\le&\|V_-\|_{K_3(\Omega)},
\end{align*}
whence the argument in \eqref{EQ:V+} works well
in this case, and we get \eqref{EQ:V-}. 
The proof of Lemma \ref{Lem:V} is complete. 
\end{proof}

We are now in a position to prove Proposition \ref{Prop:Self-adjoint}. 

\begin{proof}[Proof of Proposition \ref{Prop:Self-adjoint}]
Let $q : H^1_0(\Om)\times H^1_0(\Om)\rightarrow \C$ be the quadratic form by letting 
\[
	q(u,v)=\int_{\Om} (\nabla u \cdot \overline{\nabla v} + Vu\overline{v})\, dx, \quad u,v\in H^1_0(\Om). 
\]
It is clear that $q$ is densely defined and semi-bounded. 
Hence, as a consequence of 
Lemma \ref{Lem:Self-adjoint}, 
it suffices to show that the quadratic form 
$q$ is closed. Hence all we have to do is to 
show that the norm $\|\cdot\|_{+1}$ 
is equivalent to 
that of $H^1_0(\Om)$, where 
$\|\cdot\|_{+1}$ is defined in \eqref{EQ:1-norm},
i.e., 
\[
\|u\|_{+1}=\sqrt{q(u,u)+\|u\|^2_{L^2(\Omega)}}.
\]
In fact, by using Lemma \ref{Lem:V} 
we have 
\begin{align*}
\|u\|^2_{+1} &\le \|\nabla u\|^2_{L^2(\Omega)} 
+ \int_{\Om} V(x) |u(x)|^2 \, dx
+\|u\|^2_{L^2(\Omega)}\\
&\le C\left(\|\nabla u\|_{L^2(\Omega)}
+\|u\|^2_{L^2(\Omega)}\right), 
\end{align*}
and by using assumption \eqref{EQ:katonorm1} on $V$, we see that
\begin{align*}
\|u\|^2_{+1} &\ge \|\nabla u\|^2_{L^2(\Omega)} 
-\int_{\Om} V_-(x) |u(x)|^2 \, dx 
+\|u\|^2_{L^2(\Omega)} \\
&\ge \left(1-\frac{\|V_-\|_{K_d(\Omega)}\Gamma(d/2-1)}{4\pi^{d/2}}\right)\|\nabla u\|^2_{L^2(\Omega)} 
+ \|u\|^2_{L^2(\Omega)}
\end{align*}
for any $u \in H^1_0(\Om)$, 
which implies that $\|\cdot\|_{+1}$ 
is equivalent to 
$\|\cdot\|_{H^1(\Omega)}$. 
The proof of Proposition \ref{Prop:Self-adjoint} is complete.
\end{proof}


\section{$L^p$-$L^q$-estimates and pointwise estimates for $e^{-tH_V}$}
In this section we shall prove 
$L^p$-$L^q$-estimates for $e^{-tH_V}$ and 
pointwise estimates for the integral kernel 
of $e^{-tH_V}$ on $\Om$. 
Throughout this section we use the following 
notation 
\[
\gamma_d=\frac{\pi^{d/2}}{\Gamma(d/2-1)}.
\]
\vspace{0.5cm}

We have the following: 

\begin{prop}\label{Prop:e-tHV}
Assume that the measurable potential $V=V_+-V_-$ satisfies $V_\pm\in K_d(\Om)$. 
Let $1 \le p \le q \le \infty$. Suppose that 
\begin{equation}\label{EQ:katonorm2}
\|V_-\|_{K_{d}(\Om)} < 2 \gamma_d.
\end{equation}
Then 
\begin{equation}\label{EQ:semigroup-LpLq}
	\| e^{-tH_V} f \|_{L^q(\Om)} \le \frac{(2 \pi t)^{-(d/2)(1/p -1/q)}}{\{1 - \|V_-\|_{K_{d}(\Om)}/2 \gamma_d\}^2} \|f\|_{L^p(\Om)}, \ \ \ \forall t > 0
\end{equation}
for any $f \in L^p(\Om)$. 
In addition, if we further assume that $V_-$ satisfies assumption \eqref{EQ:A-2}, i.e., 
\[
	\|V_-\|_{K_{d}(\Om)} < \gamma_d, 
\]
then the kernel $K(t, x, y)$ of $e^{-tH_V}$ enjoys the property that
\begin{equation}\label{EQ:kernel-pointwise}
	0 \le K(t, x, y) \le \frac{(2 \pi t)^{-d/2}}{1 - \|V_-\|_{K_{d}(\Om)}/\gamma_d} 
	e^{-\frac{|x-y|^2}{8t}}, \ \ \ \forall t > 0
\end{equation}
for any $x, y \in \Om$. 
\end{prop}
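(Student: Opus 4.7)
The plan is to run a Dyson--Phillips perturbation expansion of $e^{-tH_V}$ around the non-negatively perturbed operator $H_+ := H + V_+$, exploiting two features: since $\Om \subset \R^d$, the maximum principle gives $0 \le K_0(t,x,y) \le G_t(x-y)$ for the Dirichlet heat kernel $K_0$, where $G_t(w) := (4\pi t)^{-d/2} e^{-|w|^2/(4t)}$; and since $V_+ \ge 0$, the Trotter product formula (or the Feynman--Kac representation) then yields $0 \le K_+(t,x,y) \le G_t(x-y)$ for the kernel $K_+$ of $e^{-tH_+}$. These two comparisons reduce the analysis of $e^{-tH_V}$ to one involving only the free Gaussian majorant and the negative part $V_-$.

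I would iterate the Duhamel identity
\[
e^{-tH_V} = e^{-tH_+} + \int_0^t e^{-(t-s)H_V}\, V_-\, e^{-sH_+}\,ds
\]
to obtain the termwise non-negative expansion $e^{-tH_V} = \sum_{n \ge 0} W_n(t)$, with $W_0(t) = e^{-tH_+}$ and $W_n(t)$ an $n$-fold time-ordered integral alternating $V_-$ with factors of $e^{-sH_+}$. The kernel of $W_n$ is then dominated pointwise by the same expression with $G$'s replacing $K_+$. The central technical ingredient is the convolution-type lemma
\[
\int_0^t \int_\Om G_{t-s}(x-z)\, V_-(z)\, \tilde{G}_s(z-y)\,dz\,ds \le \beta\, \tilde{G}_t(x-y),
\]
where $\tilde{G}_t(w) := (2\pi t)^{-d/2} e^{-|w|^2/(8t)}$ is a ``slack'' Gaussian and $\beta$ is proportional to $\|V_-\|_{K_d(\Om)}/\gamma_d$. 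To prove it, one first uses $\tfrac{|x-z|^2}{t-s} + \tfrac{|z-y|^2}{s} \ge \tfrac{|x-y|^2}{t}$ to extract $\tilde{G}_t(x-y)$ outside the integral, then performs the $s$-integration to turn the remaining expression into a constant multiple of $|x-z|^{-(d-2)}$ (recognisable as the free-space Green kernel of $-\De$), whereupon the definition of the Kato norm applies.

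For the pointwise bound \eqref{EQ:kernel-pointwise}, the lemma is applied directly with $\beta = \|V_-\|_{K_d(\Om)}/\gamma_d$; the geometric series converges under $\beta < 1$, yielding the factor $(1-\beta)^{-1}$ in front of $\tilde G_t(x-y)$. For the $L^p$-$L^q$ bound \eqref{EQ:semigroup-LpLq} under the weaker hypothesis $\|V_-\|_{K_d(\Om)} < 2\gamma_d$, I would factorise $e^{-tH_V} = e^{-(t/2)H_V} \circ e^{-(t/2)H_V}$ and separately Dyson-expand each factor as a map $L^p \to L^2$ and $L^2 \to L^q$ respectively; the $L^2$-endpoint bookkeeping loosens the per-factor Kato threshold by a factor of two, producing $(1 - \|V_-\|_{K_d(\Om)}/(2\gamma_d))^{-1}$ for each half and hence the squared denominator, while the scaling $(2\pi t)^{-(d/2)(1/p-1/q)}$ comes from Young's inequality applied to the Gaussian majorant. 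The principal obstacle is the convolution lemma: tracking constants precisely enough that $\beta$ equals exactly $\|V_-\|_{K_d(\Om)}/\gamma_d$ (or $\|V_-\|_{K_d(\Om)}/(2\gamma_d)$) rather than a worse multiple, which requires careful bookkeeping of the prefactors and exponents in the passage between $G_t$ and $\tilde G_t$. Once the convolution lemma is in hand, both assertions reduce to summing a geometric series under the respective hypothesis.
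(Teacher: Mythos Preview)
Your approach is correct in outline and is essentially the D'Ancona--Pierfelice argument carried out directly on $\Om$, using at the outset that the Dirichlet heat kernel is dominated by the free Gaussian on $\R^d$. The paper takes a genuinely different route: rather than redo the Dyson--Phillips expansion, it proves a comparison principle (Lemma~\ref{Lem:comparison}) by PDE/energy methods, showing that for $f \ge 0$ one has $0 \le e^{-tH_V}f \le (e^{-t\tilde H_{\tilde V}}\tilde f)\big|_{\Om}$, where $\tilde H_{\tilde V}$ is the Schr\"odinger operator on $\R^d$ with the zero-extended potential; the argument is to show that $\|u^-(t)\|_{L^2}^2$ (respectively $\|v^-(t)\|_{L^2}^2$ for the difference $v = e^{-t\tilde H_{\tilde V}}\tilde f|_\Om - e^{-tH_V}f$) is non-increasing, using the form definition of $H_V$ together with the relative form bound on $V_-$. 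Once this domination is in hand, both \eqref{EQ:semigroup-LpLq} and \eqref{EQ:kernel-pointwise} follow by quoting the known whole-space estimates from \cite{DP} verbatim. The paper's approach is more modular and avoids re-tracking the Gaussian convolution constants you flag as the ``principal obstacle''; your approach is more self-contained but requires reproducing that bookkeeping, and also needs a justification of the Duhamel iteration for unbounded Kato-class $V_-$ on $\Om$ (standard via Feynman--Kac with killed Brownian motion, but worth stating).
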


The following lemma is crucial in the proof of 
Proposition \ref{Prop:e-tHV}.

\begin{lem}\label{Lem:comparison}
Let $d\ge 3$. 
Assume that the measurable potential $V=V_+-V_-$ satisfies 
$V_\pm\in K_d(\Om)$ and 
\begin{equation}\label{EQ:katonorm3}
\|V_-\|_{K_{d}(\Om)} < 4 \gamma_d.
\end{equation} 
Let $\tilde{V}$ be the zero extension of $V$ to $\R^d$ 
and $\tilde{H}_{\tilde{V}}$ the self-adjoint extension of $H_{\tilde{V}}$ on $L^2(\R^d)$. 
Then for any non-negative function $f\in L^2(\Om)$, 
the following estimates hold{\rm :} 
\begin{equation}\label{EQ:positivity}
	\big(e^{-tH_V} f\big)(x) \ge 0, 
\end{equation}
\begin{equation}\label{EQ:comparison}
	\big(e^{-tH_V} f\big)(x) \le \big(e^{-t \tilde{H}_{\tilde{V}}} \tilde{f}\,\big)(x)
\end{equation}
for $t > 0$ and almost everywhere $x \in \Om$, 
where $\tilde{f}$ is the zero extension of $f$ to $\R^d$. 
\end{lem}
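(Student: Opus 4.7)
The strategy is to prove both inequalities first for bounded truncations of $V$, where the Trotter product formula and the classical parabolic-maximum-principle comparison of the Dirichlet and free heat kernels apply, and then to pass to the limit via strong resolvent convergence. The Kato condition \eqref{EQ:katonorm3} together with Lemma \ref{Lem:V} will supply the uniform form bounds needed at every stage.

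Set $V^{(n)}:=\max(-n,\min(V,n))$ and let $\tilde{V}^{(n)}$ denote its zero extension to $\R^d$. For each $n$ the potential $V^{(n)}$ is bounded, so Trotter's formula gives
\begin{equation*}
e^{-tH_{V^{(n)}}}f = \lim_{k\to\infty}\p{e^{(t/k)\De_\Om}\,M_{e^{-(t/k)V^{(n)}}}}^{k} f
\end{equation*}
strongly in $L^2(\Om)$, with the analogous identity on $\R^d$ for $\tilde{H}_{\tilde{V}^{(n)}}$. The Dirichlet semigroup $e^{s\De_\Om}$ is positivity-preserving, and for any $h\ge 0$ on $\Om$ the parabolic maximum principle, applied to $(e^{s\De_{\R^d}}\tilde{h})(x)-(e^{s\De_\Om}h)(x)$ as a non-negative solution of the heat equation vanishing at $s=0$ and non-negative on $\pa\Om$, yields
\begin{equation*}
0\le (e^{s\De_\Om}h)(x)\le (e^{s\De_{\R^d}}\tilde{h})(x),\qquad x\in\Om,\ s>0.
\end{equation*}
Since $e^{-(t/k)V^{(n)}}$ is a strictly positive bounded multiplier, $\tilde{V}^{(n)}=V^{(n)}$ on $\Om$, and the zero extension of a non-negative function on $\Om$ sits below any non-negative $\R^d$-function that dominates it on $\Om$, a straightforward induction on $k$ (alternating multiplications with heat steps, and invoking the two displays above at each heat step) gives
\begin{equation*}
0\le \p{e^{(t/k)\De_\Om}M_{e^{-(t/k)V^{(n)}}}}^{k} f(x)\le \p{e^{(t/k)\De_{\R^d}}M_{e^{-(t/k)\tilde{V}^{(n)}}}}^{k} \tilde{f}(x)
\end{equation*}
a.e.\ on $\Om$ for $f\ge 0$. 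Passing to an a.e.-convergent subsequence in $k$ yields \eqref{EQ:positivity} and \eqref{EQ:comparison} with $V^{(n)}$ in place of $V$.

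To remove the truncation, I would note that $|V^{(n)}|\le|V|$ and $V^{(n)}\to V$ pointwise, so Lemma \ref{Lem:V} makes the forms $q_n(u,v):=\int_\Om(\nabla u\cdot\overline{\nabla v}+V^{(n)}u\overline{v})\,dx$ uniformly closed on $H^1_0(\Om)$ with monotone convergence to $q$. The standard form-convergence theorem (Reed--Simon VIII.25) then gives $H_{V^{(n)}}\to H_V$ in the strong resolvent sense and hence $e^{-tH_{V^{(n)}}}\to e^{-tH_V}$ strongly in $L^2(\Om)$, with the analogous statement on $\R^d$. A further a.e.\ subsequential passage in $n$ transfers the inequality to the limit and proves both \eqref{EQ:positivity} and \eqref{EQ:comparison}. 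The main obstacle I anticipate is the bookkeeping around extracting a.e.\ subsequences at each of the two limits (in $k$ and in $n$) so that the inductive pointwise inequality survives both passages; the remaining ingredients---positivity and pointwise domination of the Dirichlet heat kernel by the free one, the form-convergence theorem, and the Trotter product formula---are classical once \eqref{EQ:katonorm3} is in hand.
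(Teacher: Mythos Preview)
Your approach is correct in outline but genuinely different from the paper's. The paper argues directly by an energy method: setting $u(t)=e^{-tH_V}f$, it shows $u^-(t)\in H^1_0(\Om)$ (this is the content of the auxiliary Lemma~\ref{Lem:Negative}) and then computes
\[
\frac{d}{dt}\|u^-(t)\|_{L^2(\Om)}^2 = 2\int_\Om (H_Vu)\,u^-\,dx
= -2\|\nabla u^-\|_{L^2}^2 - 2\int_\Om V|u^-|^2\,dx
\le -2\Big(1-\tfrac{\|V_-\|_{K_d(\Om)}}{4\gamma_d}\Big)\|\nabla u^-\|_{L^2}^2\le 0,
\]
using the form bound \eqref{EQ:V-}; since $u^-(0)=0$ this gives \eqref{EQ:positivity}. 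The comparison \eqref{EQ:comparison} is obtained by the same computation applied to $v=e^{-t\tilde H_{\tilde V}}\tilde f\big|_\Om - e^{-tH_V}f$, after checking $v^-\in H^1_0(\Om)$. Your route---Trotter product for bounded truncations, pointwise domination of the Dirichlet heat kernel by the free one, and then a limiting argument---is the more ``semigroup-theoretic'' path; it avoids the somewhat delicate verification that $u^-,v^-\in H^1_0(\Om)$ and instead imports classical facts. The paper's route, in turn, is entirely self-contained and makes the role of the threshold $4\gamma_d$ in \eqref{EQ:katonorm3} transparent in a single inequality.

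Two small points to tighten in your argument. First, the forms $q_n$ are \emph{not} monotone in $n$ (you truncate $V$ from both sides), so Reed--Simon~VIII.25 as stated does not apply; what you actually have is $q_n(u,u)\to q(u,u)$ for every $u\in H^1_0(\Om)$ by dominated convergence (dominated by $|V|\,|u|^2$, integrable thanks to Lemma~\ref{Lem:V}), together with a uniform lower bound and a common form domain. That still yields strong resolvent convergence, but you should cite the appropriate version (e.g.\ Simon's form-convergence criteria or Kato~VIII). Second, invoking the parabolic maximum principle on an \emph{arbitrary} open set $\Om$ to get $e^{s\De_\Om}h\le e^{s\De_{\R^d}}\tilde h$ is a little glib; the inequality is of course true, but on irregular or unbounded $\Om$ it is cleanest to obtain it either from Feynman--Kac or by exhausting $\Om$ by smooth bounded subdomains where the classical maximum principle applies.
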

The proof of Lemma \ref{Lem:comparison} is rather long, and will be postponed. 
Let us prove Proposition \ref{Prop:e-tHV}.  

\begin{proof}[Proof of Proposition \ref{Prop:e-tHV}]
Let $f \in C^\infty_0(\Om)$. 
Applying \eqref{EQ:positivity} from Lemma \ref{Lem:comparison} to non-negative functions $|f| - f$ and $|f| + f$, we obtain 
\[
	 -\big(e^{-tH_V} |f|\big)(x)\le\big(e^{-tH_V} f\big)(x)\le \big(e^{-tH_V} |f|\big)(x)
\]
for any $t > 0$ and almost everywhere $x \in \Om$. 
Hence the above inequality and \eqref{EQ:comparison} from Lemma \ref{Lem:comparison} imply that 
\begin{equation}\label{EQ:comparison'}
	\big| \big(e^{-tH_V} f\big)(x) \big| \le \big(e^{-t \tilde{H}_{\tilde{V}}} \tilde{|f|}\big)(x)
\end{equation}
for any $t > 0$ and almost everywhere 
$x \in \Om$. 
Here we recall the result of 
$L^p$-$L^q$-estimates for 
$e^{-t \tilde{H}_{\tilde{V}}}$ on $\mathbb{R}^d$:
\begin{equation}\label{EQ:D'Ancona-Pierfelice}
\| e^{-t \tilde{H}_{\tilde{V}}} 
\tilde{|f|}\|_{L^q(\R^d)} 
\le \frac{(2 \pi t)^{-(d/2)(1/p -1/q)}}{\{1-
\|\tilde{V}_-\|_{K_{d}(\R^d)}/2 \gamma_d\}^2} 
\|\tilde{f}\|_{L^p(\R^d)},\quad\forall t>0,
\end{equation}
provided $1 \le p \le q \le \infty$. 
(see Proposition 5.1 from \cite{DP}). 
Combining \eqref{EQ:comparison'} and \eqref{EQ:D'Ancona-Pierfelice}, 
we obtain the estimate \eqref{EQ:semigroup-LpLq} for $f\in C^\infty_0(\Om)$. 
Thus, by density argument, we conclude 
the estimates \eqref{EQ:semigroup-LpLq} 
for any $f\in L^p(\Om)$ if $p<\infty$. 
The case $p=\infty$ follows from the 
duality argument. 

We now turn to prove \eqref{EQ:kernel-pointwise}. 
We adopt a sequence $\{j_{\ep}(x)\}_{\ep>0}$ 
of functions defined by letting
\begin{equation}\label{mollifier}
	j_{\ep}(x) := \frac{1}{\ep^d}\, j\bigg(\frac{x}{\ep}\bigg), \quad x \in \R^d,
\end{equation}
where 
\begin{equation*}
j(x) = 
\begin{cases}
	C_d\, e^{-1/(1-|x|^2)}, \quad &|x| < 1,\\
	0, &|x| \ge 1
\end{cases}
\end{equation*}
with 
\[
	C_d := \bigg(\int_{\R^d} 
	e^{-\frac{1}{1-|x|^2}}\, dx\bigg)^{-1}.
\]
As is well-known, the sequence 
$\{j_{\ep}(x)\}_{\ep}$ enjoys the 
following property: 
\begin{equation}\label{EQ:delta}
	j_{\ep}(\cdot -y) \rightarrow \delta_{y} \quad \text{in } \Sscr'(\R^d) \quad(\ep \rightarrow 0),
\end{equation}
where $\delta_{y}$ is the Dirac delta function at $y \in \Om$. 
Let $y\in\Om$ be fixed, and let $K(t,x,y)$ and $\tilde{K}(t,x,y)$ be kernels of $e^{-tH_V}$ and $e^{-t \tilde{H}_{\tilde{V}}}$, respectively. 
Taking $\ep>0$ sufficiently small so that $\mathrm{supp}\,j_{\ep}(\cdot-y)\Subset\Om$, and 
applying \eqref{EQ:positivity} and \eqref{EQ:comparison} from Lemma \ref{Lem:comparison} to both $f$ and $\tilde{f}$ replaced by $j_{\ep}(\cdot -y)$, we get
\[
	0 \le \int_{\Om} K(t,x,z) j_{\ep}(z-y) dz \le \int_{\R^d} \tilde{K}(t,x,y) j_{\ep}(z-y) dz
\]
for any $x\in \Om$. 
Noting \eqref{EQ:delta} and taking the limit of the previous inequality as $\ep \to 0$, we get
\[
	0 \le K(t,x,y) \le \tilde{K}(t,x,y) 
\]
for any $t > 0$ and $x\in \Om$. 
Finally, by using the pointwise estimates: 
\[
\tilde{K}(t,x,y) \le \frac{(2 \pi t)^{-d/2}}{1 - \|\tilde{V}_-\|_{K_{d}(\R^d)}/\gamma_d} 
e^{-\frac{|x-y|^2}{8t}}
\left(=\frac{(2 \pi t)^{-d/2}}{1 - \|V_-\|_{K_{d}(\Om)}/\gamma_d} e^{-\frac{|x-y|^2}{8t}}\right)
\]
(see Proposition 5.1 from \cite{DP}),
we obtain the estimate \eqref{EQ:kernel-pointwise}, as desired. 
The proof of Proposition \ref{Prop:e-tHV} is finished. 
\end{proof}

In the rest of this section we shall prove Lemma \ref{Lem:comparison}. 
To prove Lemma \ref{Lem:comparison} we need further the following two lemmas. 
The first one is concerned with the existence and uniqueness of solutions for evolution equations in abstract setting. 

\begin{lem}\label{Lem:Semigroup}
Let $\mathscr{H}$ be a Hilbert space. 
Assume that $A$ is a non-negative self-adjoint operator on $\mathscr{H}$. 
Let $\{T(t)\}_{t \ge 0}$ be the semigroup generated by $A$, and let $f \in \mathscr{H}$ and $u(t) = T(t) f$. 
Then $u$ is the unique solution of the following problem{\rm :}
\begin{equation*}
\begin{cases}
	u \in C([0, \infty); \mathscr{H}) 
	\cap C^1((0, \infty); \mathscr{H}), \\
	u(t)\in \mathcal{D}(A), \quad t>0,\\
	u'(t) + Au(t) = 0,\quad t > 0, \\
	u(0) = f.
\end{cases}
\end{equation*}
\end{lem}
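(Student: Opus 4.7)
The plan is to establish existence by taking $u(t)=T(t)f$ and to establish uniqueness by an energy argument based on the self-adjointness and non-negativity of $A$. Throughout I will use the functional calculus $T(t)=\int_0^\infty e^{-t\lambda}\,dE_A(\lambda)$ associated with the spectral resolution $\{E_A(\lambda)\}$ of $A$.

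For existence, I first check the regularity of $u(t)=T(t)f$. Strong continuity on $[0,\infty)$ is a standard consequence of the dominated convergence theorem applied to the scalar integrals $\int_0^\infty |e^{-t\lambda}-e^{-t_0\lambda}|^2\,d\langle E_A(\lambda)f,f\rangle$, using that $|e^{-t\lambda}|\le 1$ and the finiteness of $\|f\|^2$. For $t>0$, membership in $\mathcal{D}(A)$ follows from
\[
\int_0^\infty \lambda^2 e^{-2t\lambda}\,d\langle E_A(\lambda)f,f\rangle \;\le\; \sup_{\lambda\ge 0}\bigl(\lambda^2 e^{-2t\lambda}\bigr)\,\|f\|^2<\infty,
\]
since $\lambda\mapsto\lambda e^{-t\lambda}$ is bounded on $[0,\infty)$. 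To differentiate, I apply the spectral theorem to the difference quotient: for $t>0$ and small $h$,
\[
\Bigl\|\frac{u(t+h)-u(t)}{h}+AT(t)f\Bigr\|^2=\int_0^\infty\Bigl|\frac{e^{-(t+h)\lambda}-e^{-t\lambda}}{h}+\lambda e^{-t\lambda}\Bigr|^2 d\langle E_A(\lambda)f,f\rangle,
\]
and a dominated-convergence argument (using a uniform bound of the integrand by a constant multiple of $\lambda^2 e^{-t\lambda}$ for $|h|<t/2$) shows the right-hand side tends to zero. This yields $u'(t)=-Au(t)$ for $t>0$, together with the continuity of $u'$ by the same spectral argument. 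The initial condition $u(0)=f$ is immediate.

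For uniqueness, let $u$ and $v$ both satisfy the listed properties and set $w=u-v$, so that $w\in C([0,\infty);\mathscr{H})\cap C^1((0,\infty);\mathscr{H})$, $w(t)\in\mathcal{D}(A)$ with $w'(t)=-Aw(t)$ for $t>0$, and $w(0)=0$. Since $A$ is self-adjoint and non-negative, for $t>0$,
\[
\frac{d}{dt}\|w(t)\|^2 = 2\,\mathrm{Re}\,\langle w'(t),w(t)\rangle = -2\,\langle Aw(t),w(t)\rangle \le 0,
\]
so $t\mapsto\|w(t)\|^2$ is non-increasing on $(0,\infty)$. Combining this monotonicity with continuity at $0$ and $\|w(0)\|=0$, I conclude $\|w(t)\|^2\le \lim_{s\to 0^+}\|w(s)\|^2=0$ for every $t\ge 0$, hence $u\equiv v$.

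The main technical obstacle is the bookkeeping at $t=0$: differentiability of $u$ (and of $\|w\|^2$) is only asserted on the open half-line $(0,\infty)$, so the energy inequality cannot be integrated directly from $0$. Bridging from $(0,\infty)$ back to $t=0$ requires the continuity hypothesis $w\in C([0,\infty);\mathscr{H})$ to pass the monotonicity through to the initial value. Justifying termwise differentiation of the spectral integral in the existence part is a similar, but routine, point that I would handle with the standard dominated-convergence argument outlined above.
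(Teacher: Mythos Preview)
Your argument is correct and follows the standard route via the spectral theorem for existence and an energy inequality for uniqueness. The paper does not give its own proof of this lemma at all; it simply cites Cazenave and Haraux \cite[Theorem 3.2.1]{CH}, so there is nothing to compare on the paper's side beyond noting that your proof supplies exactly the kind of self-contained argument the reference would provide. One tiny cosmetic point: the dominating function you name, ``a constant multiple of $\lambda^2 e^{-t\lambda}$,'' should really be $\lambda^2 e^{-c t\lambda}$ for some $c\in(0,1)$ (e.g.\ $c=1/2$) once you allow $h<0$ with $|h|<t/2$; this does not affect the argument since $\sup_{\lambda\ge 0}\lambda^2 e^{-ct\lambda}<\infty$ still gives an integrable dominant against the finite spectral measure.
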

For the proof of Lemma \ref{Lem:Semigroup}, 
see, e.g., Cazenave and Haraux 
\cite[Theorem 3.2.1]{CH}. \\

The second one is about the differentiability properties for composite functions of Lipschitz continuous functions and $W^{1,p}$-functions. 

\begin{lem}\label{Lem:Miyajima}
Let $\Om$ be an open set in $\R^d$, 
where $d\ge1$, and let $1 \le p \le \infty$. 
Consider the positive and negative parts of a real-valued function $u \in W^{1,p}(\Om)${\rm :}
\[
	u^+ = \chi_{\{u>0\}} u \quad 
	\text{and} \quad 
	u^- = - \chi_{\{u<0\}} u. 
\]
Then $u^{\pm} \in W^{1,p}(\Om)$ and 
\[
	\partial_{x_j} u^{+} = \chi_{\{u>0\}} \partial_{x_j} u, \quad \partial_{x_j} u^{-} = - \chi_{\{u<0\}} \partial_{x_j} u \quad (1 \le j \le d),
\]
where $\displaystyle \pa_{x_j}=\pa/\pa x_j$. 
\end{lem}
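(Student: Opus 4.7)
The approach is the classical Stampacchia argument: approximate the Lipschitz map $t\mapsto t^+$ by a family of $C^1$ functions, apply the smooth chain rule to $F_\ep(u)$, and let $\ep\to 0$. For each $\ep>0$, set
\[
F_\ep(t)=\begin{cases}\sqrt{t^2+\ep^2}-\ep & \text{if } t>0,\\ 0 & \text{if } t\le 0,\end{cases}
\]
which is $C^1(\R)$ with $F_\ep(0)=0$, $0\le F_\ep(t)\le t^+$, $|F_\ep'(t)|\le 1$, and such that $F_\ep(t)\to t^+$ and $F_\ep'(t)\to \chi_{\{t>0\}}(t)$ pointwise as $\ep\to 0$.

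Next I would establish the chain rule $\partial_{x_j}F_\ep(u)=F_\ep'(u)\,\partial_{x_j}u$ for $u\in W^{1,p}(\Om)$. For $1\le p<\infty$, take mollifications $u_k\in C^\infty(\Om)$ with $u_k\to u$ in $W^{1,p}_{\mathrm{loc}}(\Om)$ and, along a subsequence, pointwise a.e.\ in $\Om$. For the smooth $u_k$ the classical chain rule gives $\partial_{x_j}F_\ep(u_k)=F_\ep'(u_k)\partial_{x_j}u_k$. Since $F_\ep$ is $1$-Lipschitz, $F_\ep(u_k)\to F_\ep(u)$ in $L^p_{\mathrm{loc}}(\Om)$; since $F_\ep'$ is continuous and bounded by $1$ and $\partial_{x_j}u_k\to \partial_{x_j}u$ in $L^p_{\mathrm{loc}}(\Om)$, dominated convergence yields $F_\ep'(u_k)\partial_{x_j}u_k\to F_\ep'(u)\partial_{x_j}u$ in $L^p_{\mathrm{loc}}(\Om)$. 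The identity therefore passes to the limit in $\mathscr{D}'(\Om)$. For $p=\infty$ one reduces to $p<\infty$ through $W^{1,\infty}(\Om)\subset W^{1,q}_{\mathrm{loc}}(\Om)$ for any $q<\infty$, and the $L^\infty$ bound follows from the pointwise estimate $|F_\ep'(u)\partial_{x_j}u|\le|\partial_{x_j}u|$.

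Now let $\ep\to 0$. For any $\va\in C^\infty_c(\Om)$, integration by parts gives
\[
\int_\Om F_\ep(u)\,\partial_{x_j}\va\,dx=-\int_\Om F_\ep'(u)\,\partial_{x_j}u\cdot\va\,dx.
\]
The integrands are dominated by $|u|\,|\partial_{x_j}\va|$ and $|\partial_{x_j}u|\,|\va|$ respectively, both in $L^1(\Om)$, and they converge pointwise to $u^+\partial_{x_j}\va$ and $\chi_{\{u>0\}}\partial_{x_j}u\cdot\va$ (the latter since $F_\ep'(t)\to\chi_{\{t>0\}}(t)$ holds at every $t\in\R$, including $t=0$). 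Dominated convergence yields
\[
\int_\Om u^+\partial_{x_j}\va\,dx=-\int_\Om \chi_{\{u>0\}}\partial_{x_j}u\cdot\va\,dx,
\]
which is the desired weak-derivative identity. Since $|\chi_{\{u>0\}}\partial_{x_j}u|\le|\partial_{x_j}u|$, we get $u^+\in W^{1,p}(\Om)$. The statement for $u^-$ follows by applying the result to $-u$ and noting $(-u)^+=u^-$.

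The main obstacle is the chain-rule step: one must carry the smooth identity $\partial_{x_j}F_\ep(u_k)=F_\ep'(u_k)\partial_{x_j}u_k$ through the limit $u_k\to u$ in $W^{1,p}_{\mathrm{loc}}$, which hinges on a.e.\ convergence along a subsequence of $u_k$ combined with the continuity and uniform bound $|F_\ep'|\le 1$. Once this is in hand, the subsequent passage $\ep\to 0$ is a routine application of Lebesgue's dominated convergence theorem.
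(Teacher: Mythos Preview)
Your argument is correct and is precisely the classical Stampacchia approximation that underlies Gilbarg--Trudinger, Lemma~7.6; the paper does not supply its own proof but simply cites that reference, so your write-up is in fact more detailed than what appears in the paper.
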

\vspace{2mm}
For the proof of Lemma \ref{Lem:Miyajima}, see Gilbarg and Trudinger \cite[Lemma 7.6]{GT}. \\

To prove \eqref{EQ:positivity}, we show that the negative part of $e^{-tH_V}f$ vanishes in $\Om$, provided $f\ge0$. 
For this purpose, we prepare the following lemma.  

\begin{lem}\label{Lem:Negative}
Let $V$ be as in Lemma \ref{Lem:comparison},
and $f \in L^2(\Om)$ a non-negative function. 
Put $$u(t) = e^{-tH_V}  f,\quad t>0.$$
Then the negative part $u^-(t)$ of $u(t)$ belongs to $H^1_0(\Om)$ for each $t>0$. 
\end{lem}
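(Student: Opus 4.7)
The plan is to combine the regularity provided by the semigroup with the general fact that the negative-part operation preserves $H^1_0(\Om)$.

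First, I would use Lemma \ref{Lem:Semigroup} to place $u(t)$ in $H^1_0(\Om)$. The Kato-norm hypothesis \eqref{EQ:katonorm3} imposed in Lemma \ref{Lem:comparison} is exactly the condition \eqref{EQ:katonorm1} of Proposition \ref{Prop:Self-adjoint}, so $H_V$ is non-negative and self-adjoint on $L^2(\Om)$. Applying Lemma \ref{Lem:Semigroup} with $A=H_V$ therefore yields $u(t)=e^{-tH_V}f \in \mathcal{D}(H_V) \subset H^1_0(\Om)$ for every $t>0$.

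Next, I would prove the general claim that $w \in H^1_0(\Om)$ implies $w^- \in H^1_0(\Om)$. My approach is to smooth out the map $s\mapsto s^-$ by the Lipschitz approximations
\[
F_\ep(s) = \frac{1}{2}\left(\sqrt{s^2+\ep^2} - s - \ep \right),\qquad \ep>0,
\]
which satisfy $F_\ep \in C^\infty(\R)$, $F_\ep(0)=0$, $|F'_\ep|\le 1$, with $F_\ep(s)\to s^-$ pointwise and $F'_\ep(s)\to -\chi_{\{s<0\}}$ for $s\ne 0$. Given $w_n \in C^\infty_0(\Om)$ converging to $w$ in $H^1(\Om)$, smoothness of $F_\ep$ together with $F_\ep(0)=0$ implies $F_\ep(w_n) \in C^\infty_0(\Om)$, and the classical chain rule combined with the uniform Lipschitz bound on $F_\ep$ and dominated convergence gives $F_\ep(w_n)\to F_\ep(w)$ in $H^1(\Om)$; hence $F_\ep(w) \in H^1_0(\Om)$ for every $\ep>0$.

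Sending $\ep\to 0$, the bound $|F_\ep(w)|\le|w|$ combined with pointwise convergence and dominated convergence yields $F_\ep(w) \to w^-$ in $L^2(\Om)$, while for gradients one has $\nabla F_\ep(w) = F'_\ep(w)\nabla w \to -\chi_{\{w<0\}}\nabla w = \nabla w^-$ in $L^2(\Om)$ by Lemma \ref{Lem:Miyajima} and dominated convergence. Closedness of $H^1_0(\Om)$ in $H^1(\Om)$ then gives $w^- \in H^1_0(\Om)$, and specializing to $w=u(t)$ completes the proof. The main obstacle is precisely this last gradient convergence: the pointwise limit $F'_\ep(s)\to -\chi_{\{s<0\}}$ fails at $s=0$, but this is harmless because $\nabla w = 0$ a.e.\ on $\{w=0\}$, again by Lemma \ref{Lem:Miyajima}, so the anomaly at the origin does not contribute to the $L^2$-limit.
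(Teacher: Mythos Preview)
Your proof is correct and follows essentially the same approach as the paper: both first use Lemma~\ref{Lem:Semigroup} to place $u(t)\in H^1_0(\Om)$, then approximate the negative-part map by smooth Lipschitz functions vanishing at the origin and pass to the limit in $H^1$. The only cosmetic differences are that the paper uses a cut-off $\psi_n(s)=n^{-1}\psi(ns)$ that vanishes identically for $s\ge 0$ and runs a single diagonal sequence $\psi_n\circ\varphi_n$, whereas you use the explicit regularization $F_\ep(s)=\tfrac12(\sqrt{s^2+\ep^2}-s-\ep)$ and take the two limits $n\to\infty$, $\ep\to 0$ separately.
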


\begin{proof}
Lemma \ref{Lem:Semigroup} assures that 
$$
\left\{
\begin{aligned}
& u \in C([0, \infty); L^2(\Om)) \cap C^1 ((0, \infty); L^2(\Om)),\\
& u(t) \in H^1_0(\Omega),\quad H_Vu(t)\in L^2(\Omega),\quad t>0,\\
& \pa_tu(t)+H_Vu(t)=0,\quad t>0,\\
& u(0)=f.
\end{aligned}
\right.
$$
Since $u(t) \in H^1_0(\Om)$ for each $t > 0$, there exist $\va_n(t) \in C^{\infty}_0(\Om)$ ($n=1,2,\ldots$) such that 
\begin{equation}\label{EQ:H1-density}
	\va_n(t) \rightarrow u(t) \quad \text{in } H^1(\Om)
\end{equation}
as $n \rightarrow \infty$ for each $t>0$. 
Here $\{\va_n\}_n$ also depends on $t$. For the sake of simplicity, 
we may omit the time variable $t$ of $\va_n$ without any confusion.  
Let us take a non-negative function $\psi \in C^{\infty}(\R)$ as 
\begin{equation*}
\psi(x) 
\begin{cases}
	= -x,\quad &x \le -1,\\
	\le -x, &-1 < x < 0, \\
	= 0, &x \ge 0, 
\end{cases}
\end{equation*}
and put 
\begin{equation} \label{EQ:psi-n}
	\psi_n(x) := \frac{\psi(nx)}{n}, \quad n =1,2,\cdots. 
\end{equation}
Then there exists a constant $C > 0$ such that 
\begin{equation}\label{EQ:psi}
	|\psi'_n(x)| \le C, \quad \forall x \in \R, \ \forall n \in \N.
\end{equation}
Let us consider two kinds of 
composite functions $\psi_n \circ \va_n$ and 
$\psi_n \circ u$. 
We show that 
\begin{equation}\label{Lebesgue1-1}
	\psi_n \circ \va_n - \psi_n \circ u \rightarrow 0 \quad \text{in } H^1(\Om), 
\end{equation}
\begin{equation}\label{Lebesgue1-2}
	\psi_n \circ u - u^- \rightarrow 0 \quad \text{in } H^1(\Om)
\end{equation}
as $n \rightarrow \infty$. 
In fact,  
by the mean value theorem, we have 
\begin{align}\label{EQ:L1}
	\|\psi_n \circ \va_n - \psi_n \circ u\|_{L^2(\Om)}=&\bigg\| \int^{1}_{0} \psi'_n \big(\theta \va_n + (1 -\theta)u\big) (\va_n - u) \, d\theta \bigg\|_{L^2(\Om)}\\
	\le& C \|\va_n - u\|_{L^2(\Om)}, \nonumber
\end{align} 
and the derivative of $\psi_n \circ \va_n - \psi_n \circ u$ is written as
\begin{align}\label{EQ:L2}
	&\|\partial_{x_j} (\psi_n \circ \va_n - \psi_n \circ u)\|_{L^2(\Om)}\\
	=& \|\psi'_n(\va_n) \partial_{x_j} \va_n - \psi'_n(u) \partial_{x_j} u \|_{L^2(\Om)} \nonumber \\
	\le&  \|\psi'_n(\va_n) (\partial_{x_j} \va_n - \partial_{x_j} u)\|_{L^2(\Om)} + \|\{\psi'_n(\va_n) - \psi'_n(u)\} \partial_{x_j} u \|_{L^2(\Om)} \nonumber \\
	\le& C \|\partial_{x_j} \va_n - \partial_{x_j} u\|_{L^2(\Om)} + \|\{ \psi'_n(\va_n) - \psi'_n(u)\}\partial_{x_j} u \|_{L^2(\Om)}, \nonumber 
\end{align}
where we used \eqref{EQ:psi} in the last step. 
Noting the pointwise convergence and uniform boundedness with respect to $n$:
\begin{align*}
	&\big\{\psi'_n(\va_n)(x) - \psi'_n(u)(x)\big\} \partial_{x_j} u(x) \rightarrow 0 \quad \text{as $n\to\infty$ for a.e. $x\in\Om$},\\
	&\big|\big\{\psi'_n(\va_n)(x) - \psi'_n(u)(x)\big\}\partial_{x_j} u(x) \big| \le 2C |\partial_{x_j} u(x)| \in L^2(\Om),
\end{align*}
we can apply Lebesgue's dominated convergence theorem 
to obtain 
\begin{equation}\label{EQ:L3}
	\|\{ \psi'_n(\va_n) - \psi'_n(u) \} \partial_{x_j} u \|_{L^2(\Om)} \rightarrow 0 
\end{equation}
as $n \rightarrow \infty$. 
Hence, summarizing \eqref{EQ:H1-density}
and \eqref{EQ:L1}--\eqref{EQ:L3}, we 
obtain \eqref{Lebesgue1-1}. 

As to the latter convergence \eqref{Lebesgue1-2}, since
\[
	|(\psi_n \circ u)(x) - u^- (x)| \le 2 |u(x)| \in L^2(\Om),
\]
\[
	|\partial_{x_j} (\psi_n \circ u)(x) - \partial_{x_j} u^- (x)| \le (C+1) |\partial_{x_j} u(x)| \in L^2(\Om),
\]
and 
\[
	(\psi_n \circ u)(x) - u^- (x) \rightarrow 0, 
\]
\[
	\partial_{x_j} (\psi_n \circ u)(x) - \partial_{x_j} u^- (x) = \{\psi'_n(u) +\chi_{\{u<0\}}\}\partial_{x_j} u (x) \rightarrow 0
\]
as $n \rightarrow \infty$ for almost everywhere $x \in \Om$, 
Lebesgue's dominated convergence theorem allows as to obtain \eqref{Lebesgue1-2}. 
Thus \eqref{Lebesgue1-1} and \eqref{Lebesgue1-2} imply that 
\begin{equation*}
	\psi_n \circ \va_n - u^- \rightarrow 0 \quad \text{in } H^1(\Om) \quad (n \rightarrow \infty).
\end{equation*}
Since 
$\psi_n \circ \va_n \in C^{\infty}_0 (\Om)$, 
we conclude that 
$u^- \in H^1_0(\Om)$.
The proof of Lemma \ref{Lem:Negative} is finished.
\end{proof}

We are now in a position to prove Lemma \ref{Lem:comparison}. 

\begin{proof}[Proof of Lemma \ref{Lem:comparison}.] 
Let $f \in L^2(\Om)$ be non-negative almost everywhere on $\Om$. 
We recall that  
\[
u(t) = e^{-tH_V}f \quad \text{for $t\ge 0$.}
\] 
If we show that $\|u^-(t)\|^2_{L^2(\Om)}$ 
is monotonically decreasing with respect to 
$t\ge 0$, then we obtain 
\[
u^-(t,x) = 0 
\]
for each $t > 0$ and almost everywhere $x \in \Om$, since $u^-(0,x) = f^-(x) = 0$ for almost everywhere $x \in \Om$. 
This means that 
\[
u(t,x) \ge 0
\] 
for each $t > 0$ and almost everywhere $x \in \Om$; thus we conclude \eqref{EQ:positivity}. 
Hence it is sufficient to show that 
\begin{equation}\label{EQ:MD}
\frac{d}{dt} \int_{\Om} \big(u^{-} \big)^2 \,dx\le 0.
\end{equation}
By the definition of $u^+$, we have $\pa_tu^+(t,x)=0$ for $x\in\{u<0\}$ and each $t>0$. 
We compute
\begin{align} \label{EQ:Diff}
	\frac{d}{dt} \int_{\Om} \big(u^{-} \big)^2\,dx=& 2 \int_{\Om} u^{-} \partial_t u^{-}\,dx\\
	\nonumber
	=& 2 \int_{\{u<0\}} u^{-} \partial_t \big(u^+-u\big) \,dx \\
	\nonumber
	=& -2 \int_{\{u<0\}} u^{-}\partial_t u\,dx\\
	\nonumber
	=& 2 \int_{\Omega} (H_V u) u^{-}\,dx
	\nonumber
\end{align}	
where we use the equation $\partial_t u +H_Vu=0$ in the last step. 
Since $u^-\in H^1_0(\Om)$ by Lemma \ref{Lem:Negative}, 
we have, by going back to 
\eqref{EQ:Schrodinger} in 
the definition of $H_V$,
\begin{equation}\label{EQ:aaa}
\int_{\Omega} (H_V u)u^{-}\,dx=  \int_{\Omega} \nabla u \cdot \nabla u^{-}\,dx + \int_{\Omega} V u u^{-}\,dx.
\end{equation}
Here we see from Lemma \ref{Lem:Miyajima} that
\[
\nabla u^-=-\chi_{\{u<0\}}\nabla u,
\]
and hence, 
the first term on the right of \eqref{EQ:aaa}
is written as 
\[
\int_{\Omega} \nabla u \cdot \nabla u^{-}\,dx =- \int_{\Omega} |\nabla u^{-}|^2\,dx.
\]
As to the second, by the estimate \eqref{EQ:V-} from Lemma \ref{Lem:V}, we have
\begin{align*}
\int_{\Omega} V u u^{-}\,dx=& - \int_{\Omega} V |u^{-}|^2 \,dx\\
\le&\int_{\Omega} V_- |u^{-}|^2\,dx \\
\le&\frac{\|V_-\|_{K_d(\Omega)}}{4\gamma_d}\int_{\Omega} |\nabla u^{-}|^2\,dx;
\end{align*}
thus we find from assumption 
\eqref{EQ:katonorm3} on $V$ that 
\begin{align*}
\int_{\Omega} (H_V u)u^{-}\,dx\le& -\left(1-\frac{\|V_-\|_{K_d(\Omega)}}{4\gamma_d}\right)\int_{\Omega} |\nabla u^{-}|^2\,dx\\
\le& 0,
\end{align*}
and hence, combining 
this inequality and \eqref{EQ:Diff},  
we conclude \eqref{EQ:MD}. \\

Next, we prove \eqref{EQ:comparison}. 
Let us define two functions 
$v^{(1)}(t)$ and $v^{(2)}(t)$ as follows:
\[
	v^{(1)}(t) := e^{-t \tilde{H}_{\tilde{V}}} \tilde{f}\quad \text{and} \quad v^{(2)}(t) := e^{-t H_V} f
\]
for $t\ge0$. 
Then it follows from Lemma \ref{Lem:Semigroup} that $v^{(1)}$ and $v^{(2)}$ satisfy
\begin{equation}\label{EQ:v1}
\begin{cases}
	v^{(1)} \in C([0, \infty); L^2(\R^d)) \cap C^1 ((0, \infty); L^2(\R^d)),\\
	v^{(1)}(t) \in H^1(\R^d), \quad \tilde{H}_{\tilde{V}} v^{(1)}(t) \in L^2(\R^d), \\
	\partial_t v^{(1)} (t) + \tilde{H}_{\tilde{V}} v^{(1)}(t)= 0,\\
	v^{(1)}(0) = \tilde{f}
\end{cases}
\end{equation}
and
\begin{equation}\label{EQ:v2}
\begin{cases}
	v^{(2)} \in C([0, \infty); L^2(\Om)) \cap C^1 ((0, \infty); L^2(\Om)),\\
	v^{(2)}(t) \in H^1_0(\Om), \quad H_V v^{(2)}(t) \in L^2(\Om),\\
	\partial_t v^{(2)}(t) + H_V v^{(2)}(t)= 0,\\
	v^{(2)}(0) = f
\end{cases}
\end{equation}
for each $t > 0$, respectively. 
We define a new function $v$ as
\[
v(t) := v^{(1)}(t)|_{\Om} - v^{(2)}(t)
\]
for $t\ge0$, where $v^{(1)}(t)|_{\Om}$ is the restriction of $v^{(1)}(t)$ to $\Om$.
Let us consider the negative part of $v$: 
\[
v^- = - \chi_{\{v<0\}} v.
\]
Then, thanks to 
\eqref{EQ:v1} and \eqref{EQ:v2}, we have 
\[
v^{-} \in C([0, \infty); L^2(\Om)) 
\cap C^1((0, \infty); L^2(\Om)). 
\]
Moreover, by using Lemma \ref{Lem:Miyajima}, 
we have $v^- \in H^1(\Om)$,
since $v \in H^1(\Om)$. 
Once we prove that 
\begin{equation}\label{negative-v}
	v^- \in H^1_0(\Om), 
\end{equation}
we can get, by the previous argument, 
\begin{equation}\label{EQ:MDv-}
\frac{d}{dt} \int_{\Om} \big(v^{-} \big)^2\,dx \le 0.
\end{equation}
In fact, 
by the definition of $v^-$, we have
\begin{align*}
\frac{d}{dt} \int_{\Omega} 
\big(v^{-} \big)^2\,dx=&-2 \int_{\{v<0\}} 
v^{-} \partial_t v^{(1)}\,dx + 2 \int_{\{v<0\}} 
v^{-} \partial_t v^{(2)}\,dx \\
=& 2 \int_{\R^d} (\tilde{H}_{\tilde{V}} v^{(1)}) \tilde{v}^{-}\,dx - 2 
\int_{\Omega} 
(H_V v^{(2)}) v^{-}\,dx,
\end{align*}	
where $\tilde{v}^-$ is the zero extension of 
$v^-$ to $\R^d$, and 
we use equations 
$\partial_t v^{(1)}+\tilde{H}_{\tilde{V}}v^{(1)}=0$ 
and $\partial_t v^{(2)}+H_Vv^{(2)}=0$ in the last step. 
Since $v^-\in H^1_0(\Om)$ by \eqref{negative-v}, 
we have, by definitions of $\tilde{H}_{\tilde{V}}$ and $H_V$,
\begin{align*}
& \int_{\R^d} (\tilde{H}_{\tilde{V}} v^{(1)}) \tilde{v}^{-}\,dx -  
\int_{\Omega} 
(H_V v^{(2)}) v^{-}\,dx\\
=&\int_{\R^d} \nabla v^{(1)} \cdot \nabla \tilde{v}^{-}\,dx 
+ \int_{\R^d} \tilde{V} v^{(1)} \tilde{v}^{-}\,dx
- \int_{\Omega} \nabla v^{(2)} \cdot \nabla v^{-} \,dx
- \int_{\Omega} V v^{(2)}v^{-}\,dx \\
=&\int_{\Omega} \nabla v \cdot \nabla v^{-}\,dx 
+ \int_{\Omega} V v v^{-}\,dx\\
\le& -\left(1-\frac{\|V_-\|_{K_d(\Omega)}}{4\gamma_d}\right)\int_{\Om} |\nabla v^{-}|^2\,dx\\
\le&\ 0,
\end{align*}
where we used assumption A in the last step. 
Hence we obtain \eqref{EQ:MDv-}, which 
implies the required inequality 
\eqref{EQ:comparison}. \\

It remains to prove \eqref{negative-v}. 
The proof is similar to that of Lemma \ref{Lem:Negative}. 
Since $v^{(2)}(t) \in H^1_0(\Om)$ for each $t>0$ by \eqref{EQ:v2}, there exist $\va_n=\va_n(t) \in C^{\infty}_0(\Om)$ such that 
\[
\va_n \rightarrow v^{(2)} \quad \text{in } H^1(\Om)
\]
as $n \rightarrow \infty$. 
Put 
\[
v_n(t) := v^{(1)}(t)|_{\Om} - \va_n(t),
\quad n=1,2,\cdots,
\]
for each $t>0$. 
Let $\psi_n$ be as in \eqref{EQ:psi-n}. 
As in the proof of Lemma \ref{Lem:Negative}, 
we can show that 
\[
\psi_n \circ v^-_n - v^- \rightarrow 0 
\quad \text{in } H^1(\Om) 
\]
as $n \rightarrow \infty$.
Since $v^-_n$ have compact supports in 
$\mathrm{supp}\, \va_n$ 
by $v^{(1)}\ge0$ on $\Omega$, the functions 
$\psi_n \circ v^-_n$ also have compact supports in $\Om$. 
Let $\widetilde{\psi_n \circ v^-_n}$ be the zero extension of $\psi_n \circ v^-_n$ to $\R^d$, 
and let $J_{\ep}$ be Friedrichs' mollifier: For $u \in L^1_{\mathrm{loc}}(\R^d)$, 
\[
	(J_{\ep} u)(x) := (j_{\ep} * u)(x) = \int_{\R^d} j_{\ep}(x-y) u(y) \, dy, \quad x \in \R^d,
\]
where $\{j_\ep(x)\}_{\ep}$ are functions 
defined in \eqref{mollifier}. 
Taking $\ep =\ep_n$ sufficiently small so that 
$\ep_n\to 0$ ($n\to\infty$) and 
$\mathrm{supp}\, J_{\ep_n}\left(
\widetilde{\psi_n \circ v^-_n} \right)$ 
is contained in $\Om$, 
we have 
\[
J_{\ep_n}(\widetilde{\psi_n \circ v^-_n})|_{\Om} \in C^{\infty}_0(\Om).
\] 
Since 
\[
	J_{\ep_n}\left(\widetilde{\psi_n \circ v^-_n} \right)\Big|_{\Om} - v^- \rightarrow 0 \quad \text{in } H^1(\Om) 
\]
as $n \rightarrow \infty$, 
we conclude \eqref{negative-v}. 
The proof of Lemma \ref{Lem:comparison} is complete. 
\end{proof}


\section{$L^p$-$\ell^p(L^q)_{\theta}$-boundedness for the resolvent of $\theta H_V $}

In this section we shall prove the boundedness 
of resolvent $(\theta H_V - z)^{-\beta}$ 
$(\beta>0)$ in scaled amalgam spaces. 
The result in this section plays an 
important role in the proof of Theorem \ref{Thm1.1}. \\

More precisely, we have:

\begin{thm}\label{Thm4.1}
Assume that the measurable potential $V=V_+-V_-$ satisfies $V_\pm\in K_d(\Om)$, and 
that $V_-$ satisfies 
assumption \eqref{EQ:katonorm2} 
in Proposition \ref{Prop:e-tHV}.
Let  $1 \le p \le q \le \infty$ and 
$\beta > (d/2)(1/p - 1/q)$, 
and let $z \in \mathbb{C}$ with ${\rm Re}(z) < 0$. Then there exists a constant $C = C(d, p, q, \beta, z) > 0$ such that 
\begin{equation}\label{Th4.1-1}
	\| (\theta H_V - z)^{- \beta} \|_{\mathscr{B}(L^p(\Om), L^q(\Om))} 
	\le C {\theta}^{- (d/2)(1/p-1/q)}
\end{equation}
for any $\theta > 0$. If we further assume that $V_-$ satisfies assumption \eqref{EQ:A-2},
then 
\begin{equation}\label{Th4.1-2}
	\| (\theta H_V - z)^{- \beta} \|_{\mathscr{B}(L^p(\Om), \ell^p(L^q)_{\theta})} \le C {\theta}^{- (d/2)(1/p-1/q)}
\end{equation}
for any $\theta > 0$. 
\end{thm}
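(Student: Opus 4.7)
The starting point is the Laplace representation of the fractional negative power: since $H_V \ge 0$ by Proposition \ref{Prop:Self-adjoint} and $\Re(z) < 0$, the spectral calculus yields
\begin{equation}\label{EQ:LaplacePlan}
(\theta H_V - z)^{-\beta} = \frac{1}{\Gamma(\beta)} \int_0^\infty t^{\beta-1}\, e^{tz}\, e^{-t\theta H_V}\,dt,
\end{equation}
initially on $L^2(\Om)$; the right-hand side will also serve to define the operator on $L^p(\Om)$ once the estimates below are in hand. Throughout, set $\alpha := (d/2)(1/p-1/q)$, so the hypothesis reads $\beta > \alpha$.

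For the $L^p$-$L^q$ bound \eqref{Th4.1-1}, I would apply Minkowski's inequality in $L^q(\Om)$ to \eqref{EQ:LaplacePlan} and invoke \eqref{EQ:semigroup-LpLq} with time $s = t\theta$ (noting $e^{-t\theta H_V}$ coincides with the semigroup at time $t\theta$), obtaining
\[
\|(\theta H_V - z)^{-\beta} f\|_{L^q(\Om)} \le C\|f\|_{L^p(\Om)}\int_0^\infty t^{\beta-1} e^{-t|\Re(z)|}(t\theta)^{-\alpha}\,dt = C\theta^{-\alpha}\|f\|_{L^p(\Om)}\int_0^\infty t^{\beta-\alpha-1}e^{-t|\Re(z)|}\,dt,
\]
and the time integral is finite exactly because $\beta > \alpha$ and $\Re(z) < 0$.

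For the scaled amalgam estimate \eqref{Th4.1-2}, the strategy is to combine the $L^p$-$L^q$ bound (sharp on ``diagonal'' cube pairs) with the Gaussian pointwise kernel bound \eqref{EQ:kernel-pointwise} (which delivers off-diagonal decay). I decompose $f = \sum_{m \in \mathbb{Z}^d} f_m$ with $f_m := f\, \chi_{C_\theta(m)}$, so that $\sum_m \|f_m\|_{L^p(\Om)}^p = \|f\|_{L^p(\Om)}^p$. Fix $N_0 \ge \sqrt{d}$. For $|n-m| \le N_0$ I use \eqref{EQ:semigroup-LpLq} in the crude form $\|e^{-t\theta H_V} f_m\|_{L^q(C_\theta(n))} \le C(t\theta)^{-\alpha}\|f_m\|_{L^p}$. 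For $|n-m| > N_0$ the geometric inequality $|x-y| \ge c\theta^{1/2}|n-m|$, valid for $x\in C_\theta(n)$ and $y\in C_\theta(m)$, combined with \eqref{EQ:kernel-pointwise} and Hölder's inequality (to convert the arising $\|f_m\|_{L^1}$ into $\theta^{d/(2p')}\|f_m\|_{L^p}$) gives
\[
\|e^{-t\theta H_V}f_m\|_{L^q(C_\theta(n))} \le C\, t^{-d/2}\theta^{-\alpha} e^{-c|n-m|^2/t}\|f_m\|_{L^p},
\]
where the three $\theta$-powers $\theta^{-d/2}$ (kernel), $\theta^{d/(2p')}$ (Hölder) and $\theta^{d/(2q)}$ ($L^q$ on the target cube) collapse to $\theta^{-\alpha}$. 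Summing in $m$ and applying the discrete Young convolution inequality in $\ell^p(\mathbb{Z}^d)$ then yields
\[
\|e^{-t\theta H_V}f\|_{\ell^p(L^q)_\theta} \le C\theta^{-\alpha}(t^{-\alpha} + 1)\|f\|_{L^p(\Om)},
\]
the bounded summand arising from $\sup_{t>0} t^{-d/2}\sum_{|k|>N_0} e^{-c|k|^2/t} < \infty$ (for small $t$ the super-exponential factor $e^{-cN_0^2/(2t)}$ absorbs $t^{-d/2}$; for large $t$ a Riemann-sum comparison gives $\sum \sim t^{d/2}$, cancelling $t^{-d/2}$). Inserting this into \eqref{EQ:LaplacePlan} and integrating in $t$, once more using $\beta > \alpha$ and $\Re(z) < 0$, completes the proof of \eqref{Th4.1-2}.

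The main obstacle is the small-$t$ behavior in the second estimate. The pointwise Gaussian bound alone would contribute a factor $t^{-d/2}$, which is not integrable against $t^{\beta-1}$ near $t=0$ under the sharp hypothesis $\beta > \alpha$ (it would require $\beta > d/2$). One must therefore exploit the $L^p$-$L^q$ smoothing on the diagonal block of cubes to replace the $t^{-d/2}$ singularity by the milder $t^{-\alpha}$ one, matching exactly the hypothesis on $\beta$.
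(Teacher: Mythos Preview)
Your proof is correct and follows the same overall architecture as the paper: both establish \eqref{Th4.1-1} directly from the Laplace formula \eqref{EQ:LaplacePlan} together with the $L^p$--$L^q$ semigroup bound \eqref{EQ:semigroup-LpLq}, and both reduce \eqref{Th4.1-2} to the intermediate estimate
\[
\|e^{-t\theta H_V}f\|_{\ell^p(L^q)_\theta}\le C\theta^{-\alpha}\bigl(t^{-\alpha}+1\bigr)\|f\|_{L^p(\Om)},
\]
which is then fed back into the Laplace integral.

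The difference lies in how this intermediate estimate is obtained. The paper dominates $|e^{-t\theta H_V}f|$ pointwise by the convolution $K_0(\theta t,\cdot)*|\tilde f|$ on $\R^d$, computes $\|K_0(\theta t,\cdot)\|_{\ell^1(L^r)_\theta(\R^d)}$ by splitting the central cube $n=0$ from the rest, and then invokes the Young inequality for amalgam spaces (Lemma~\ref{Lem:Young}). You instead decompose $f$ into its cube pieces $f_m$, treat near-diagonal blocks $|n-m|\le N_0$ via the semigroup $L^p$--$L^q$ bound and far-diagonal blocks via the pointwise Gaussian estimate \eqref{EQ:kernel-pointwise} with H\"older on each cube, and sum by the ordinary discrete Young inequality on $\mathbb Z^d$. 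Your route is a touch more hands-on and avoids appealing to the amalgam Young lemma; the paper's route is slightly cleaner once that lemma is in hand. Substantively the two arguments coincide: your near-diagonal estimate plays the role of the paper's $n=0$ term and your far-diagonal sum is exactly the paper's $n\neq0$ computation, and both produce the same $t^{-\alpha}+1$ profile. Your closing remark correctly identifies why the diagonal block must be handled separately to keep the hypothesis at the sharp $\beta>\alpha$ rather than $\beta>d/2$.
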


\begin{proof}
Let us first prove \eqref{Th4.1-1}. 
We use the following well-known formula: For $z \in \mathbb{C}$ with ${\rm Re}(z) < 0$ and $\beta > 0$, 
\begin{equation}\label{4-1}
	(H_V - z)^{- \beta} = \frac{1}{\Gamma(\beta)} \int^{\infty}_{0} t^{\beta - 1} e^{z t} e^{-tH_V} \,dt. 
\end{equation}
Since $V$ satisfies assumption \eqref{EQ:katonorm2}, 
thanks to the formula \eqref{4-1} and $L^p$-$L^q$-estimates \eqref{EQ:semigroup-LpLq} for $e^{-t \theta H_V}$ in Proposition \ref{Prop:e-tHV}, 
we can estimate 
\begin{align*}
	 \| (\theta H_V - z)^{- \beta} f \|_{L^q(\Om)}& \le \frac{1}{\Gamma(\beta)} \int^{\infty}_{0} t^{\beta - 1} e^{{\rm Re}(z) t} \| e^{-t \theta H_V} f \|_{L^q(\Om)} \,dt \\
	& \le C\theta^{-(d/2)(1/p-1/q)} \int^{\infty}_{0} t^{\beta - 1} e^{{\rm Re}(z) t} 
	t^{-(d/2)(1/p-1/q)} \,dt \cdot \|f\|_{L^p(\Om)}.
\end{align*}
Since $\beta > (d/2)(1/p - 1/q)$ and ${\rm Re}(z) < 0$, the integral on the right is absolutely 
convergent. 
Hence we obtain
\[
	\| (\theta H_V - z)^{- \beta} f \|_{L^q(\Om)} \le C\theta^{-(d/2)(1/p-1/q)} \|f\|_{L^p(\Om)}.
\]
This proves \eqref{Th4.1-1}. \\

Let us turn to the proof of \eqref{Th4.1-2}. If we can prove that
\begin{equation}\label{4.1-2}
	\|e^{-t \theta H_V} f\|_{\ell^p(L^q)_{\theta}} \le C 
	\theta^{-(d/2)(1/p-1/q)} 
	\left\{t^{-(d/2)(1/p-1/q)} + 1\right\} \|f\|_{L^p(\Om)},\quad \forall t > 0
\end{equation} 
for any $f \in L^p(\Om)$ provided 
$1 \le p \le q \le \infty$,
then the estimate \eqref{Th4.1-2} is obtained by combining \eqref{4-1} and \eqref{4.1-2}. 
In fact, 
we estimate  
\begin{align*}
& \| (\theta H_V - z)^{- \beta} 
f \|_{\ell^p(L^q)_{\theta}}\\
\le & \frac{1}{\Gamma(\beta)} 
\int^{\infty}_{0} t^{\beta - 1} e^{{\rm Re}(z) t} 
\| e^{-t \theta H_V} f 
\|_{\ell^p(L^q)_{\theta}} \,dt \\
\le &C\theta^{-(d/2)(1/p-1/q)} 
\int^{\infty}_{0} t^{\beta - 1} e^{{\rm Re}(z) t} 
\left\{t^{-(d/2)(1/p-1/q)} + 1\right\} \,dt 
\cdot \|f\|_{L^p(\Om)}.
\end{align*}
Since $\beta >(d/2)(1/p-1/q)$ and 
${\rm Re}(z) < 0$, the integral on 
the right is absolutely convergent. 
Hence 
we conclude that 
\[
	\| (\theta H_V - z)^{- \beta} \|_{\ell^p(L^q)_{\theta}} \le C 
	\theta^{-(d/2)(1/p-1/q)} \|f\|_{L^p(\Om)}.
\]
This proves \eqref{Th4.1-2}. 
Therefore, all we have to do is to prove the estimate \eqref{4.1-2}. 
To this end, we prove the following estimate:
For $1 \le q \le \infty$ and any $\theta > 0$,
\begin{equation}\label{4.1-1}
	\| K_0(\theta t, \cdot) \|_{\ell^1(L^q)_{\theta}} \le C 
	\theta^{-(d/2)(1/p-1/q)} 
	\left\{t^{-(d/2)(1/p-1/q)} + 1\right\},\ \ \ \forall t > 0,
\end{equation}
where $K_0(t,x)$ is defined by letting
\[
	K_0(t, x) = \frac{(2 \pi t)^{-d/2}}{1 - \|V_-\|_{K_{d}(\Om)}/\gamma_d} e^{-\frac{|x|^2}{8t}}=:C_1t^{-d/2}e^{-\frac{|x|^2}{8t}}
\]
for any $t > 0$ and $x \in \R^d$. 
Here, recalling that 
\[
\gamma_d = \frac{\pi^{d/2}}{\Gamma(d/2-1)},  
\]
we note from assumption \eqref{EQ:A-2} on $V$ that 
\[
C_1=\frac{(2 \pi)^{-d/2}}{1 - \|V_-\|_{K_{d}(\Om)}/\gamma_d}>0.
\]
For the proof of \eqref{4.1-1}, 
we compute $\| K_0(\theta t, \cdot) \|_{L^q(C_{\theta}(n))}$ for the case $n = 0$ and $n \neq 0$, separately: \\

{\bf The case $n=0$}: We estimate
\begin{align}\label{EQ:n=0}
	 \| K_0(\theta t, \cdot) 
	 \|_{L^q(C_{\theta}(0))}\le& 
	C_1(\theta t)^{-d/2} \bigg( \int_{|x| < \sqrt{d\theta}} e^{-\frac{ q|x|^2}{8\theta t}} \,dx \bigg)^{1/q} \\
	=& C_1(\theta t)^{-d/2} \bigg( \int_{|x| < \sqrt{\frac{d}{t}}} e^{- \frac{q |x|^2}{8}} (\theta t)^{d/2} \,dx \bigg)^{1/q}\nonumber \\
	\le & C (\theta t)^{-(d/2)(1 - 1/q)} \bigg( \int_{\R^d} e^{- \frac{q |x|^2}{8}} \,dx \bigg)^{1/q}\nonumber \\
	 \le & C (\theta t)^{-(d/2)(1 - 1/q)}.\nonumber
\end{align}
{\bf The case $n\ne0$}: We estimate
\begin{align}\label{EQ:n-neq-0}
	\sum_{n \neq 0} \| K_0(\theta t, \cdot) \|_{L^q(C_{\theta}(n))}& =
	C_1(\theta t)^{-d/2} \sum_{n \neq 0} \bigg( \int_{C_{\theta}(n)} e^{- \frac{q |x|^2}{8\theta t}} \,dx \bigg)^{1/q} \\
& \le C_1(\theta t)^{-d/2} \sum_{n \neq 0}\bigg(  \sup_{x \in C_{\theta}(n)} 
e^{- \frac{|x|^2}{8\theta t}}\bigg) \cdot\bigg(\int_{C_{\theta}(n)} \,dx \bigg)^{1/q}.\nonumber
\end{align}
Here, observing that 
\[
\frac{|\theta^{1/2}n|}{2}\le |x| \, (\le 2|\theta^{1/2}n|), \quad x \in C_{\theta}(n),
\]
we can estimate the right member of \eqref{EQ:n-neq-0} as 
\[
 C_1(\theta t)^{-d/2} \bigg( \sum_{n \neq 0} e^{- \frac{|n|^2}{32t}} \bigg) (\theta^{d/2})^{1/q},
\]
and hence, we get 
\begin{equation*}
\sum_{n \neq 0} \| K_0(\theta t, \cdot) \|_{L^q(C_{\theta}(n))}\le C_1(\theta t)^{-d/2} \bigg( \sum_{n \neq 0} e^{- \frac{|n|^2}{32t}} \bigg) (\theta^{d/2})^{1/q}.
\end{equation*}
Here, by an explicit calculation, we see that 
\begin{align*}
\sum_{n \neq 0} e^{- \frac{|n|^2}{32t}} 
=& \sum_{n \neq 0} e^{- \frac{n^2_1+n^2_2+\cdots+n^2_d}{32t}}
=2^d\left(
\sum^{\infty}_{j=1}e^{- \frac{j^2}{32t}}\right)^d\\
\le&2^d\left(\int^{\infty}_{0}e^{- \frac{\sigma^2}{32t}}\,d\sigma\right)^d\nonumber\\
=&(8\sqrt{2})^d\pi^{d/2} t^{d/2}\nonumber.
\end{align*}
Summarizing the estimates obtained now, we conclude that 
\begin{align}\label{EQ:large}
\sum_{n \neq 0} \| K_0(\theta t, \cdot) \|_{L^q(C_{\theta}(n))}
\le& C_1(\theta t)^{-d/2} \cdot (8\sqrt{2})^d\pi^{d/2} t^{d/2}\cdot (\theta^{d/2})^{1/q}\\
=&(8\sqrt{2})^d\pi^{d/2}C_1
\theta^{-(d/2)(1/p-1/q)}.\nonumber
\end{align}
Combining the estimates \eqref{EQ:n=0}--\eqref{EQ:large}, we obtain \eqref{4.1-1}, as desired.

We are now in a position to prove the key estimate \eqref{4.1-2}. 
Let $f \in L^p(\Om)$ and $\tilde{f}$ be a zero extension of $f$ to $\R^d$. 
Thanks to the estimate \eqref{EQ:kernel-pointwise} in Proposition \ref{Prop:e-tHV}, we have 
\begin{align*}
	\|e^{-t \theta H_V} f\|_{\ell^p(L^q)_{\theta}} =& \left\|\int_{\Om}K(\theta t,\cdot,y)f(y)\,dy\right\|_{\ell^p(L^q)_{\theta}} \\
	\le& \left\|\int_{\Om}K(\theta t,\cdot,y)|f(y)|\,dy\right\|_{\ell^p(L^q)_{\theta}} \\
	\le& \left\|\int_{\R^d}K_0(\theta t,\cdot-y)|\tilde{f}(y)|\,dy\right\|_{\ell^p(L^q)_{\theta}(\R^d)}. 
\end{align*}
Applying the Young inequality \eqref{EQ:Young} (see appendix \ref{App:AppendixA}) to the right member, and using the inequality \eqref{4.1-1}, we estimate
\begin{align*}
	\|e^{-t \theta H_V} f\|_{\ell^p(L^q)_{\theta}}& \le 3^d 
	\| K_0(\theta t, \cdot) \|_{\ell^1(L^r)_{\theta}(\R^d)} \|\tilde{f}\|_{\ell^p(L^p)_{\theta}(\R^d)} \\
	& \le C \theta^{- (d/2)(1-1/r)} 
	\left\{t^{- (d/2)(1-1/r)} + 1\right\} \|\tilde{f}\|_{L^p(\R^d)} \\
	& = C \theta^{-(d/2)(1/p-1/q)} 
	\left\{t^{-(d/2)(1/p-1/q)}+ 1\right\} 
	\|f\|_{L^p(\Om)},
\end{align*}
provided that $p,q,r$ satisfy $1 \le p,q,r \le \infty$ and $1/p + 1/r - 1 = 1/q$. 
This proves \eqref{4.1-2}. 
The proof of Theorem \ref{Thm4.1} is finished. 
\end{proof}


\section{Commutator estimates}
In this section we shall prepare commutator estimates. These estimates will be also 
an important tool in the proof of Theorem 
\ref{Thm1.1}. 
Among other things, we introduce an operator $\mathrm{Ad}$  as follows:\\

\noindent{\bf Definition.}\ {\it 
Let $X$ and $Y$ be topological vector spaces, 
and let $A$ and $B$ be continuous 
linear operators from $X$ and $Y$ into 
themselves, respectively. 
For a continuous linear operator $L$ from 
$X$ into $Y$, the operator $\mathrm{Ad}^k(L)$ from 
$X$ into $Y$, $k = 0, 1, \cdots$, is 
successively defined by
\[ 
\mathrm{Ad}^0(L) = L,\quad 
\mathrm{Ad}^k(L) = \mathrm{Ad}^{k-1}(BL-LA),\quad k \ge 1.
\]
}

The result in this section is concerned with $L^2$-boundedness for 
$\mathrm{Ad}^k(e^{-it R_{V,\theta}})$, 
where $R_{V,\theta}$ is the resolvent operator defined by letting
\[
	R_{V,\theta} := (\theta H_V + M)^{-1},\quad \theta > 0
\]
for a fixed $M > 0$. 
Hereafter, operators $A$ and $B$ are taken as  
\begin{equation}\label{EQ:AB}
	A=B=x_j - \theta^{1/2} n_j\quad  
\text{for some $j\in\{1,\cdots,d\}.$}
\end{equation}
Then we shall prove here the following. 

\begin{prop}\label{prop:Prop5.1}
Let $d \ge 3$. 
Assume that the measurable potential 
$V = V_+ - V_-$ satisfies  
$V_{\pm} \in K_{d}(\Om)$, and that 
$V_-$ satisfies assumption \eqref{EQ:katonorm3} in Lemma \ref{Lem:comparison}.
Let $A$ and $B$ be the operators as 
in \eqref{EQ:AB}, and let $L = e^{-it R_{V,\theta}}$.
Then for any non-negative integer $k$, 
there exists a constant $C = C (d,M,k)>0$ 
such that 
\begin{equation}\label{EQ:Prop5.1}
	\|\mathrm{Ad}^k(e^{-itR_{V,\theta}})\|_{\mathscr{B}(L^2(\Om))} \le C \theta^{k/2}  (1+t)^k
\end{equation}
for any $t>0$ and $\theta > 0$.
\end{prop}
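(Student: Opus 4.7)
Since $A = B = x_j - \theta^{1/2}n_j$ in \eqref{EQ:AB}, the operator $\mathrm{Ad}^k(L)$ collapses to the $k$-fold iterated commutator $[A,[A,\ldots,[A,L]\ldots]]$. The plan has two parts. First, I would reduce \eqref{EQ:Prop5.1} to the companion estimate
\[
\|\mathrm{Ad}^j(R_{V,\theta})\|_{\mathscr{B}(L^2(\Om))} \le C_j\theta^{j/2}, \qquad j = 0, 1, \ldots, k,
\]
via a Duhamel formula for the unitary group $U(t) := e^{-itR_{V,\theta}}$. Second, I would establish this companion estimate by a separate induction built on the identities
\[
[A, R_{V,\theta}] = -2\theta\, R_{V,\theta}\,\pa_{x_j}\,R_{V,\theta} \quad\text{and}\quad [A,\pa_{x_j}] = -I.
\]

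The required building blocks are as follows. First, $\|R_{V,\theta}\|_{\mathscr{B}(L^2(\Om))} \le M^{-1}$ is immediate from $\theta H_V + M \ge M > 0$. Second, pairing $(\theta H_V + M)u = v$ with $u = R_{V,\theta}v$ in $L^2(\Om)$ yields $\theta\langle u, H_V u\rangle_{L^2(\Om)} + M\|u\|^2_{L^2(\Om)} = \langle u, v\rangle_{L^2(\Om)}$, and combining with the form lower bound $\langle u, H_V u\rangle_{L^2(\Om)} \ge (1-c)\|\nabla u\|^2_{L^2(\Om)}$ from Lemma \ref{Lem:V} (where $c = \|V_-\|_{K_d(\Om)}\Gamma(d/2-1)/(4\pi^{d/2}) < 1$ by assumption \eqref{EQ:katonorm3}) produces $\|\pa_{x_j}R_{V,\theta}\|_{\mathscr{B}(L^2(\Om))} \le C\theta^{-1/2}$. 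The dual bound $\|R_{V,\theta}\pa_{x_j}\|_{\mathscr{B}(L^2(\Om))} \le C\theta^{-1/2}$ then follows from the self-adjointness of $R_{V,\theta}$ and the skew-symmetry of $\pa_{x_j}$ on $H^1_0(\Om)$. Finally, the commutator identity is a consequence of $[A, R_{V,\theta}] = R_{V,\theta}[\theta H_V + M, A]R_{V,\theta}$ together with $[\theta H_V + M, A] = \theta[-\De, x_j] = -2\theta\,\pa_{x_j}$; it yields in particular $\|[A, R_{V,\theta}]\|_{\mathscr{B}(L^2(\Om))} \le C\theta^{1/2}$.

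Next I would inductively prove $\|\mathrm{Ad}^k(R_{V,\theta})\|_{\mathscr{B}(L^2(\Om))} \le C_k\theta^{k/2}$. By the Leibniz rule, each application of $\mathrm{Ad}$ to a word in $R := R_{V,\theta}$ and $\pa_{x_j}$ produces a sum of terms of two kinds: either an $R$ is replaced by $-2\theta R\,\pa_{x_j}\,R$ (a \emph{split}: injects a factor $\theta$ and inserts a $\pa_{x_j}$ flanked by $R$'s), or a $\pa_{x_j}$ is replaced by $-I$ (an \emph{annihilation}: removes a $\pa_{x_j}$). Starting from $R$, any term in $\mathrm{Ad}^k(R)$ produced by $a$ splits and $b = k-a$ annihilations is a signed scalar multiple of a word of the form $R^{\al_0}\,\pa_{x_j}R^{\al_1}\,\pa_{x_j}\cdots\pa_{x_j}R^{\al_\ell}$ with $\al_i \ge 1$ and $\ell = a - b$, carrying a prefactor of magnitude $2^a\theta^a$. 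Pairing each interior $\pa_{x_j}$ with a neighboring $R$ and applying the building-block bounds controls the operator norm of such a word by $C^{a-b}\theta^{-(a-b)/2}M^{-(1+b)}$, so the whole term has size $\theta^{a-(a-b)/2} = \theta^{(a+b)/2} = \theta^{k/2}$. Since the number of distinct terms is bounded by a constant depending only on $k$, the induction closes. The main obstacle is precisely this bookkeeping: one must verify inductively that throughout the expansion every word retains the alternating structure (no two adjacent $\pa_{x_j}$'s, and $R$ at both ends), which is what guarantees that all operator products actually remain bounded on $L^2(\Om)$ with the claimed $\theta$-scaling.

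The estimate \eqref{EQ:Prop5.1} itself is then obtained by a Duhamel induction on $k$. Because $R$ is bounded and self-adjoint, $U(t)$ is unitary on $L^2(\Om)$ and satisfies $\pa_t U = -iRU$ with $U(0) = I$; moreover $\mathrm{Ad}^k(I) = 0$ for $k \ge 1$ because $A = B$. Applying $\mathrm{Ad}^k$ and expanding via the Leibniz rule gives
\[
\pa_t\,\mathrm{Ad}^k(U(t)) + iR\,\mathrm{Ad}^k(U(t)) = -i\sum_{j=1}^{k}\binom{k}{j}\mathrm{Ad}^j(R)\,\mathrm{Ad}^{k-j}(U(t)),
\]
which Duhamel converts into
\[
\mathrm{Ad}^k(U(t)) = -i\int_0^t U(t-s)\sum_{j=1}^{k}\binom{k}{j}\mathrm{Ad}^j(R)\,\mathrm{Ad}^{k-j}(U(s))\,ds.
\]
Using $\|U(t-s)\|_{\mathscr{B}(L^2(\Om))} = 1$, the companion bound $\|\mathrm{Ad}^j(R)\|_{\mathscr{B}(L^2(\Om))} \le C_j\theta^{j/2}$, and the inductive hypothesis $\|\mathrm{Ad}^{k-j}(U(s))\|_{\mathscr{B}(L^2(\Om))} \le C_{k-j}\theta^{(k-j)/2}(1+s)^{k-j}$ for $j \ge 1$, the integrand is controlled by a constant times $\theta^{k/2}(1+s)^{k-1}$, whose integral over $[0,t]$ is at most $C_k\theta^{k/2}(1+t)^k$, as required.
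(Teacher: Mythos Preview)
Your proof is correct and follows the same overall architecture as the paper: first establish $\|\mathrm{Ad}^k(R_{V,\theta})\|_{\mathscr{B}(L^2(\Om))}\le C_k\theta^{k/2}$ from the basic bounds $\|R_{V,\theta}\|\le M^{-1}$ and $\|\pa_{x_j}R_{V,\theta}\|\le C\theta^{-1/2}$, then feed this into a Duhamel identity for $U(t)=e^{-itR_{V,\theta}}$ and close by induction on $k$. The two implementations differ in detail. For the companion estimate, the paper derives the closed two--term recursion
\[
\mathrm{Ad}^k(R_{V,\theta})=\theta\bigl\{-2k\,\mathrm{Ad}^{k-1}(R_{V,\theta})\pa_{x_j}R_{V,\theta}+k(k-1)\,\mathrm{Ad}^{k-2}(R_{V,\theta})R_{V,\theta}\bigr\},
\]
which makes the induction immediate and sidesteps the word bookkeeping you flag as ``the main obstacle''; your combinatorial split/annihilation accounting reaches the same $\theta^{k/2}$ scaling but requires verifying (as you do) that every word retains the alternating $R\cdots R$ structure. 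For the Duhamel step, the paper uses the trinomial expansion coming from differentiating $s\mapsto e^{-isR_{V,\theta}}x_je^{-i(t-s)R_{V,\theta}}$, whereas you apply $\mathrm{Ad}^k$ directly to $\pa_tU=-iR_{V,\theta}U$ and obtain the binomial version; your formula is arguably simpler and equally valid since $\mathrm{Ad}^k(I)=0$. Either route yields \eqref{EQ:Prop5.1}.
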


First, we prepare $L^2$-boundedness 
for $R_{V,\theta}$ and 
$\partial_{x_j} R_{V,\theta}$ to prove 
Proposition \ref{prop:Prop5.1}. 

\begin{lem}\label{Lem:Lem5.2}
Let $d \ge 3$ and $V$ be as in Proposition \ref{prop:Prop5.1}. 
Then the following estimates hold{\rm :} 
\begin{equation} \label{EQ:Lem5.2-1}
	\| R_{V,\theta} \|_{\mathscr{B}(L^2(\Om))} \le M^{-1}, 
	\end{equation}
	\begin{equation}\label{EQ:Lem5.2-2}
	\| \nabla R_{V,\theta} \|_{\mathscr{B}(L^2(\Om))} 
	\le M^{-1/2}\left(1 - \frac{\|V_-\|_{K_{d}(\Om)}\Gamma(d/2-1)}{4\pi^{d/2}}\right)^{-1/2} \theta^{-1/2}
\end{equation}
for any $\theta > 0$. 
\end{lem}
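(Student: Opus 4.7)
The plan is to derive both estimates from a single energy identity. Since Proposition~\ref{Prop:Self-adjoint} guarantees that $H_V$ is non-negative and self-adjoint, the operator $\theta H_V + M$ is self-adjoint with spectrum contained in $[M,\infty)$, so that $R_{V,\theta}$ is a well-defined bounded operator on $L^2(\Om)$; the spectral theorem immediately yields \eqref{EQ:Lem5.2-1}. Alternatively, \eqref{EQ:Lem5.2-1} drops out of the same energy estimate that produces \eqref{EQ:Lem5.2-2}, which is what I will do.

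For \eqref{EQ:Lem5.2-2}, I take $f\in L^2(\Om)$, set $u := R_{V,\theta}f$, and note that $u\in\Dcal(H_V)\subset H^1_0(\Om)$ by the very definition of $\Dcal(H_V)$ in Proposition~\ref{Prop:Self-adjoint}. Testing the equation $(\theta H_V+M)u=f$ against $u$ in $L^2(\Om)$ and invoking the form representation \eqref{EQ:Schrodinger} (valid since $v=u\in H^1_0(\Om)$), I obtain
\[
\theta\|\nabla u\|^2_{L^2(\Om)} + \theta\int_{\Om} V|u|^2\,dx + M\|u\|^2_{L^2(\Om)} = \langle f,u\rangle_{L^2(\Om)}.
\]
Since $V\ge -V_-$, Lemma~\ref{Lem:V}, estimate \eqref{EQ:V-}, bounds the potential term from below:
\[
\theta\int_{\Om} V|u|^2\,dx \ge -\theta\,\alpha\,\|\nabla u\|^2_{L^2(\Om)},
\qquad \alpha:=\frac{\|V_-\|_{K_d(\Om)}\Gamma(d/2-1)}{4\pi^{d/2}}.
\]
The smallness condition \eqref{EQ:katonorm3} rewrites exactly as $\alpha<1$, so combining the two displays with the Cauchy--Schwarz estimate $\langle f,u\rangle\le \|f\|_{L^2}\|u\|_{L^2}$ gives
\[
\theta(1-\alpha)\|\nabla u\|^2_{L^2(\Om)} + M\|u\|^2_{L^2(\Om)} \le \|f\|_{L^2(\Om)}\|u\|_{L^2(\Om)}.
\]
Dropping the nonnegative gradient term yields $M\|u\|_{L^2(\Om)}\le \|f\|_{L^2(\Om)}$, which is \eqref{EQ:Lem5.2-1}; feeding this back into the remaining gradient piece gives
\[
\theta(1-\alpha)\|\nabla u\|^2_{L^2(\Om)} \le \|f\|_{L^2(\Om)}\|u\|_{L^2(\Om)} \le M^{-1}\|f\|^2_{L^2(\Om)},
\]
which rearranges to \eqref{EQ:Lem5.2-2}.

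There is no real obstacle here: the argument is a textbook coercivity estimate. The only point that requires care is the applicability of the quadratic-form identity \eqref{EQ:Schrodinger}, but this is automatic from $u\in\Dcal(H_V)$ and the fact that both $\int_\Om V_\pm|u|^2\,dx$ are finite for $u\in H^1_0(\Om)$ (again by Lemma~\ref{Lem:V}), so every term in the energy identity is well defined. The role of the hypothesis \eqref{EQ:katonorm3} is exactly to keep $1-\alpha$ strictly positive, which is what allows the gradient to be recovered from the energy.
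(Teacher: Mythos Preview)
Your proof is correct and follows essentially the same route as the paper: both use the quadratic-form identity \eqref{EQ:Schrodinger} for $u=R_{V,\theta}f\in\Dcal(H_V)$ together with the bound \eqref{EQ:V-} from Lemma~\ref{Lem:V} to absorb the negative part of the potential into the gradient term. The only cosmetic difference is that the paper bounds $\langle H_V u,u\rangle$ through explicit spectral integrals, whereas you obtain the same control by testing $(\theta H_V+M)u=f$ against $u$ and applying Cauchy--Schwarz, which neatly packages \eqref{EQ:Lem5.2-1} and \eqref{EQ:Lem5.2-2} into a single energy estimate.
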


\begin{proof}
Since $H_V$ is the self-adjoint operator with domain 
\[
\mathcal{D}(H_V) = \{u \in H^1_0 (\Om) \ | \ H_V u \in L^2(\Om)\},
\] 
we obtain \eqref{EQ:Lem5.2-1} and \eqref{EQ:Lem5.2-2} by the spectral resolution. 
In fact, we have 
\begin{align*}
\|R_{V,\theta}f\|^2_{L^2(\Om)}
=&\int^{\infty}_0 \frac{1}{(\theta \la + M )^2}\, 
d\|E_{H_V}(\la) f\|^2_{L^2(\Om)}\\
\le &M^{-2} \int^{\infty}_0 d\, \|E_{H_V}(\la) f\|^2_{L^2(\Om)}\\
\le & M^{-2} \|f\|^2_{L^2(\Om)}
\end{align*}
for any $f \in L^2(\Om)$. This proves \eqref{EQ:Lem5.2-1}. 

Since $R_{V,\theta}f \in \mathcal{D}(H_V)$ for any $f \in L^2(\Om)$, we can write
\begin{align*}
	\|\nabla R_{V,\theta}f\|^2_{L^2(\Om)}& = 
	\int_{\Om} \Big(\nabla R_{V,\theta}f \cdot \nabla R_{V,\theta}f 
	+ V |R_{V,\theta}f|^2 - V |R_{V,\theta}f|^2\Big)\, dx \\
	& = {\langle H_V R_{V,\theta}f, R_{V,\theta}f \rangle}_{L^2(\Om)} 
	- \int_{\Om} V |R_{V,\theta}f|^2 \,dx \\
	& = I + II. 
\end{align*}
Then we estimate the first term $I$ as 
\begin{align*}
	I& \le \int^{\infty}_0 \frac{\la}{(\theta \la + M )^2}\, d\|E_{H_V}(\la) f\|^2_{L^2(\Om)}\\
	& = \int^{\infty}_0 \theta^{-1} \cdot \frac{\theta \la}{\theta \la + M } \cdot \frac{1}{\theta \la + M } \, d\|E_{H_V}(\la) f\|^2_{L^2(\Om)}\\
	& \le \theta^{-1} M^{-1} \int^{\infty}_0 d\, \|E_{H_V}(\la) f\|^2_{L^2(\Om)}\\
	& \le \theta^{-1} M^{-1} \|f\|^2_{L^2(\Om)}.
\end{align*}
As to $II$, we have, by Lemma \ref{Lem:V}, 
\begin{align*}
	II& \le \int_{\Om} V_- |R_{V,\theta}f|^2 \,dx \\
	& \le \frac{\|V_-\|_{K_{d}(\Om)}\Gamma(d/2-1)}{4\pi^{d/2}} \int_{\Om} 
	|\nabla R_{V,\theta}f|^2 \,dx.
\end{align*}
Combining the previous estimates, 
we conclude that
\[
	\|\nabla R_{V,\theta}f\|^2_{L^2(\Om)} \le \theta^{-1} \left(1 - \frac{\|V_-\|_{K_{d}(\Om)}\Gamma(d/2-1)}{4\pi^{d/2}}\right)^{-1} M^{-1} \|f\|^2_{L^2(\Om)}
\]
for any $f \in L^2(\Om)$. This proves \eqref{EQ:Lem5.2-2}.
The proof of Lemma \ref{Lem:Lem5.2} is complete. 
\end{proof}

We are now in a position to prove Proposition \ref{prop:Prop5.1}. 

\begin{proof}[Proof of Proposition \ref{prop:Prop5.1}.]
Let us denote by $\mathscr{D} (\Omega)$ the totality of the test functions on $\Omega$, and 
by $\mathscr{D}' (\Omega)$ its dual space. 
We regard $X$ as $\mathscr{D} (\Omega)$ and $Y$ as $\mathscr{D}' (\Omega)$ in the definition of operator $\mathrm{Ad}$. 
Then we have, by Lemma \ref{Lem:Recursive-Rtheta1} in appendix \ref{App:AppendixB},
\begin{equation}\label{EQ:Rtheta-1}
\mathrm{Ad}^0(R_{V,\theta})=R_{V,\theta}, \quad \mathrm{Ad}^1(R_{V,\theta}) =-2\theta R_{V,\theta}\partial_{x_j}R_{V,\theta},
\end{equation}
\begin{equation}\label{EQ:Rtheta-k}
\mathrm{Ad}^k(R_{V,\theta}) = \theta \left\{-2 k \mathrm{Ad}^{k-1}(R_{V,\theta}) \partial_{x_j} R_{V,\theta} + k(k-1) \mathrm{Ad}^{k-2}(R_{V,\theta}) R_{V,\theta}\right\}
\end{equation}
for $k \ge 2$. 
Since $R_{V,\theta}$ and $\pa_{x_j}R_{V,\theta}$ 
are bounded on $L^2(\Om)$ by 
Lemma \ref{Lem:Lem5.2}, 
$\mathrm{Ad}^k(R_{V,\theta})$ are also 
bounded on $L^2(\Om)$ for any $k\ge0$. 
Before going to prove \eqref{EQ:Prop5.1}, 
we prepare the following estimates for 
$\mathrm{Ad}^k(R_{V,\theta})$: For any non-negative 
integer $k$, there exists a constant 
$C_k>0$ such that
\begin{equation}\label{EQ:L2-est-Rtheta-k}
\|\mathrm{Ad}^k(R_{V,\theta})\|_{\Bscr(L^2(\Om))} 
\le C_k \theta^{k/2}
\end{equation}
for any $\theta>0$. 
We prove \eqref{EQ:L2-est-Rtheta-k} by induction. 
For $k=0,1$, we have, by using \eqref{EQ:Rtheta-1} and Lemma \ref{Lem:Lem5.2},
\begin{align*}
\| \mathrm{Ad}^0(R_{V,\theta}) \|_{\Bscr(L^2(\Om))} = \| R_{V,\theta} \|_{\Bscr(L^2(\Om))} \le C_0,
\end{align*}
\begin{align*}
	\| \mathrm{Ad}^1(R_{V,\theta}) \|_{\Bscr(L^2(\Om))}& = 2 \theta \| R_{V,\theta} \partial_{x_j} R_{V,\theta} \|_{\Bscr(L^2(\Om))} \\
	& \le 2 \theta M^{-1} \cdot M^{-1/2}\left(1 - \frac{\|V_-\|_{K_{d}(\Om)}\Gamma(d/2-1)}{4\pi^{d/2}}\right)^{-1/2} \theta^{-1/2}\\ 
	& = C_1 \theta^{1/2}. 
\end{align*}
Let us suppose that \eqref{EQ:L2-est-Rtheta-k} is true for $k\in\{0,1,\ldots,\ell\}$. 
Combining identities \eqref{EQ:Rtheta-k} and estimates \eqref{EQ:Lem5.2-1} 
and \eqref{EQ:Lem5.2-2} from Lemma \ref{Lem:Lem5.2}, we get 
\eqref{EQ:L2-est-Rtheta-k} for $k=\ell+1$:
\begin{align*}
	&\left\| \mathrm{Ad}^{\ell+1}(R_{V,\theta})
	\right\|_{\Bscr(L^2(\Om))} \\
	=&\left\|\theta\left\{-2(\ell+1)
\mathrm{Ad}^{\ell}(R_{V,\theta}) \partial_{x_j} R_{V,\theta} + \ell(\ell+1)\mathrm{Ad}^{\ell-1}(R_{V,\theta}) R_{V,\theta}\right\} \right\|_{\Bscr(L^2(\Om))} \\
	\le& 2\ell(\ell+1) \theta \left\{ \| \mathrm{Ad}^{\ell}(R_{V,\theta}) \|_{\Bscr(L^2(\Om))} \|\partial_{x_j} R_{V,\theta}\|_{\Bscr(L^2(\Om))} \right.\\
	&\left. \qquad + \|\mathrm{Ad}^{\ell-1}(R_{V,\theta})\|_{\Bscr(L^2(\Om))} \|R_{V,\theta}\|_{\Bscr(L^2(\Om))} \right\} \\
	\le& C_{\ell+1} \theta \left\{ \theta^{\ell/2} \cdot \theta^{-1/2} + \theta^{(\ell-1)/2}
	 \right\} \\
	\le& C_{\ell+1} \theta^{(\ell+1)/2}.
\end{align*}
Thus 
\eqref{EQ:L2-est-Rtheta-k} is true for any $k \ge 0$. 

We prove \eqref{EQ:Prop5.1} also by induction. Clearly, \eqref{EQ:Prop5.1} is true for $k=0$.
As to the case $k=1$, by using  the estimate \eqref{EQ:L2-est-Rtheta-k} and the formula 
\eqref{EQ:Recursive-Rtheta2-1} from Lemma \ref{Lem:Recursive-Rtheta2} in 
appendix~\ref{App:AppendixB}:
\[
	\mathrm{Ad}^1(e^{-itR_{V,\theta}}) = - i \int^{t}_{0} e^{-isR_{V,\theta}} \mathrm{Ad}^1(R_{V,\theta}) e^{-i(t-s)R_{\theta, V}} \,ds,
\]
we have 
\begin{align*}
&\|\mathrm{Ad}^1(e^{-itR_{V,\theta}})
\|_{\Bscr(L^2(\Om))}\\
\le &
\int^{t}_{0} \|e^{-isR_{V,\theta}}
\|_{\Bscr(L^2(\Om))} \|\mathrm{Ad}^1(R_{V,\theta})
\|_{\Bscr(L^2(\Om))} \|e^{-i(t-s)R_{V,\theta}}
\|_{\Bscr(L^2(\Om))} \,ds \\
\le & C_1 \int^{t}_{0} \theta^{1/2} \,ds \\
\le	& C_1 \theta^{1/2} (1+t).
\end{align*}
Hence, \eqref{EQ:Prop5.1} is true for $k=1$.
Let us suppose that \eqref{EQ:Prop5.1} 
holds for $k\in\{0,1,\ldots,\ell\}$. 
Then, by using the estimate 
\eqref{EQ:L2-est-Rtheta-k} and the 
formula \eqref{EQ:Recursive-Rtheta2-2} from Lemma \ref{Lem:Recursive-Rtheta2}:
\begin{align*}
&\mathrm{Ad}^{\ell+1}(e^{-itR_{V,\theta}}) \\
=& - i \int^{t}_{0} 
\sum_{\ell_1+\ell_2+\ell_3=\ell}
\Gamma(\ell_1,\ell_2,\ell_3) 
\mathrm{Ad}^{\ell_1}
(e^{-isR_{\theta, V}}) 
\mathrm{Ad}^{\ell_2+1}(R_{V,\theta}) 
\mathrm{Ad}^{\ell_3}(e^{-i(t-s)R_{V,\theta}}) 
\,ds,\nonumber
\end{align*}
where constants $\Gamma(\ell_1,\ell_2,\ell_3)$ 
are trinomial coefficients: 
\[
\Gamma(\ell_1,\ell_2,\ell_3)
=\frac{\ell!}{\ell_1! \ell_2! \ell_3!},
\]
we estimate
\begin{align*}
& \|\mathrm{Ad}^{\ell+1}(e^{-itR_{V,\theta}})
\|_{\Bscr(L^2(\Om))} \\
\le& C_{\ell+1} \int^{t}_{0} 
\sum_{\ell_1+\ell_2+\ell_3=\ell}
\|\mathrm{Ad}^{\ell_1}(e^{-isR_{V,\theta}})
\|_{\Bscr(L^2(\Om))} 
\|\mathrm{Ad}^{\ell_2+1}(R_{V,\theta})
\|_{\Bscr(L^2(\Om))} \times \\
&\qquad \qquad \qquad \times 
\|\mathrm{Ad}^{\ell_3}(e^{-i(t-s)R_{V,\theta}})
\|_{\Bscr(L^2(\Om))} \,ds\\
\le& C_{\ell+1} \int^{t}_{0} 
\sum_{\ell_1+\ell_2+\ell_3=\ell}
\theta^{\ell_1 /2}(1+s)^{\ell_1} 
\cdot \theta^{(\ell_2+1) /2} 
\cdot \theta^{\ell_3 /2}
(1+t-s)^{\ell_3} \,ds\\
\le& C_{\ell+1} \theta^{(\ell+1)/2}
(1+t)^{\ell+1}. 
\end{align*}
Hence \eqref{EQ:Prop5.1} is true for $k=\ell+1$. 
Thus \eqref{EQ:Prop5.1} holds for any $k\ge0$. 
The proof of Proposition \ref{prop:Prop5.1} is complete.
\end{proof}


\section{Proof of Theorem \ref{Thm1.1}.}

In this section we shall prove 
Theorem \ref{Thm1.1}.
To begin with, let us introduce a family 
of operators which is useful to prove 
the theorem. 
For any non-negative integer $N$, we define a family $\mathscr{A}_N$ of operators as follows: 
We say that $A \in \mathscr{A}_N$ if $A \in \mathscr{B}(L^2(\Om))$ and
\begin{equation}\label{EQ:Def-A}
	{\vertiii{A}}_{N} := \sup_{n \in \mathbb{Z}^d} \big\| |\cdot - \theta^{1/2}n|^{N} A \chi_{C_{\theta}(n)}\big\|_{\mathscr{B}(L^2(\Om))} < \infty, 
\end{equation}
where $\chi_{C_{\theta}(n)}$ are the characteristic functions of 
cubes $C_{\theta}(n)$. \\

First, we prepare two lemmas. 
\begin{lem} \label{Lem6.1}
For any integer $N$ with $N>d/2$, 
there exists a constant $C(d, N)>0$ such that 
\begin{align}\label{EQ:KEY} 
	&\sum_{m \in \mathbb{Z}^d} \|\chi_{C_{\theta}(m)} A \chi_{C_{\theta}(n)} f\|_{L^2(\Om)} \\
	\le& C(d,N) \left( \|A\|_{\mathscr{B}(L^2(\Om))} 
	+ \theta^{-d/4} {\vertiii{A}}^{d/2N}_{N} \|A\|^{1-d/2N}_{\mathscr{B}(L^2(\Om))} \right)
	 \|\chi_{C_{\theta}(n)} f\|_{L^2(\Om)}
	 \nonumber
\end{align}
for all $n \in \mathbb{Z}^d$, $A \in \mathscr{A}_N$ and $f \in L^2(\Om)$.
\end{lem}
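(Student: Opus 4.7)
The plan is to localize the output by setting $a_m := \|\chi_{C_\theta(m)}A\chi_{C_\theta(n)}f\|_{L^2(\Om)}$ and to control $\sum_m a_m$ via two complementary $\ell^2$-bounds on $\{a_m\}_{m\in\Z^d}$: an unweighted one giving the global norm $\|A\|_{\Bscr(L^2(\Om))}$, and a weighted one extracting the spatial decay encoded in $\vertiii{A}_N$. A Cauchy--Schwarz splitting with an optimized radius will then yield the stated interpolation exponent $d/(2N)$.

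First, since the cubes $\{C_\theta(m)\}_{m\in\Z^d}$ partition $\R^d$ up to null sets, orthogonality gives
\begin{equation*}
\sum_{m\in\Z^d} a_m^2=\|A\chi_{C_\theta(n)}f\|_{L^2(\Om)}^2\le\|A\|_{\Bscr(L^2(\Om))}^2\,\|\chi_{C_\theta(n)}f\|_{L^2(\Om)}^2.
\end{equation*}
Next, for $|m-n|\ge\sqrt{d}$ every $x\in C_\theta(m)$ satisfies $|x-\theta^{1/2}n|\ge\theta^{1/2}|m-n|-\theta^{1/2}\sqrt{d}/2\ge\tfrac12\theta^{1/2}|m-n|$, so
\begin{equation*}
|m-n|^{2N}a_m^2\le 2^{2N}\theta^{-N}\int_{C_\theta(m)}|x-\theta^{1/2}n|^{2N}\,|A\chi_{C_\theta(n)}f(x)|^2\,dx.
\end{equation*}
Summing over $m$, using the disjointness of the $C_\theta(m)$, $\chi_{C_\theta(n)}^2=\chi_{C_\theta(n)}$, and the definition of $\vertiii{A}_N$, yields
\begin{equation*}
\sum_{|m-n|\ge\sqrt{d}}|m-n|^{2N}a_m^2\le 2^{2N}\theta^{-N}\vertiii{A}_N^{2}\|\chi_{C_\theta(n)}f\|_{L^2(\Om)}^2.
\end{equation*}

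For any $R\ge\sqrt{d}$, Cauchy--Schwarz on the near part with the bound $\#\{m\in\Z^d:|m-n|\le R\}\le C_d R^d$ gives $\sum_{|m-n|\le R}a_m\le C_d R^{d/2}\|A\|_{\Bscr(L^2(\Om))}\|\chi_{C_\theta(n)}f\|_{L^2(\Om)}$, while Cauchy--Schwarz on the far part with weights $|m-n|^{\pm N}$ together with the tail estimate $\sum_{|m-n|>R}|m-n|^{-2N}\le C_{d,N}R^{d-2N}$ (convergent precisely because $N>d/2$) and the weighted $\ell^2$ bound above give $\sum_{|m-n|>R}a_m\le C_{d,N}R^{d/2-N}\theta^{-N/2}\vertiii{A}_N\|\chi_{C_\theta(n)}f\|_{L^2(\Om)}$. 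Setting $R_\star:=\theta^{-1/2}(\vertiii{A}_N/\|A\|_{\Bscr(L^2(\Om))})^{1/N}$ balances the two bounds: when $R_\star\ge\sqrt{d}$ I choose $R=R_\star$, so both pieces equal $C\theta^{-d/4}\vertiii{A}_N^{d/(2N)}\|A\|_{\Bscr(L^2(\Om))}^{1-d/(2N)}\|\chi_{C_\theta(n)}f\|_{L^2(\Om)}$; when $R_\star<\sqrt{d}$ I take $R=\sqrt{d}$, and the far bound collapses via $R^{d/2-N}\theta^{-N/2}\vertiii{A}_N=R^{d/2}(R_\star/R)^N\|A\|\le C_d\|A\|$, so the total is absorbed into the first term of the stated inequality.

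The main technical point is Step 2 combined with the splitting: converting the pointwise decay $a_m\lesssim\theta^{-N/2}|m-n|^{-N}\vertiii{A}_N$ (whose naive $\ell^1$-summation would require $N>d$) into actual $\ell^1$-summability under the weaker hypothesis $N>d/2$. This is achieved by routing through $\ell^2$ with weighted Cauchy--Schwarz, which only needs $\sum|m-n|^{-2N}<\infty$; the resulting square-root factor $R^{(d-2N)/2}$ is exactly what produces, after optimization of $R$, the interpolation exponent $d/(2N)$ between $\|A\|_{\Bscr(L^2(\Om))}$ and $\vertiii{A}_N$.
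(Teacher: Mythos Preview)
Your proof is correct and follows essentially the same route as the paper's: split the sum over $m$ into a near part $|m-n|\le\omega$ and a far part $|m-n|>\omega$, control the near part by Cauchy--Schwarz together with the lattice-point count and the unweighted $\ell^2$ bound $\sum_m a_m^2\le\|A\|^2\|\chi_{C_\theta(n)}f\|^2$, control the far part by weighted Cauchy--Schwarz using $\sum_{|m-n|>\omega}|m-n|^{-2N}\lesssim\omega^{d-2N}$ together with the geometric inequality $|x-\theta^{1/2}n|\ge\tfrac12\theta^{1/2}|m-n|$ on $C_\theta(m)$ and the definition of $\vertiii{A}_N$, and then optimise by taking $\omega=\theta^{-1/2}(\vertiii{A}_N/\|A\|)^{1/N}$. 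Your treatment is in fact slightly more careful than the paper's in two places: you note that the geometric inequality requires $|m-n|\ge\sqrt d$, and you handle explicitly the regime $R_\star<\sqrt d$ (the paper absorbs this silently into the ``$1$'' in its factor $1+\omega^{d/2}$, which yields the standalone $\|A\|_{\Bscr(L^2(\Om))}$ term).
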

\begin{proof}
Let $n\in\Z^d$ be fixed. For any $\om > 0$, 
we write 
\begin{align*}
	& \sum_{m \in \mathbb{Z}^d} 
\big\|\chi_{C_{\theta}(m)} A \chi_{C_{\theta}(n)} f \big\|_{L^2(\Om)}\\ 
=& \sum_{|m - n| > \om} 
|\theta^{1/2}m - \theta^{1/2}n|^{- N} |\theta^{1/2}m - \theta^{1/2}n|^{N} \big\|\chi_{C_{\theta}(m)} A \chi_{C_{\theta}(n)} f \big\|_{L^2(\Om)}\\
	& \qquad  + 
	\sum_{|m - n| \le \om} \big\|\chi_{C_{\theta}(m)} A \chi_{C_{\theta}(n)} f \big\|_{L^2(\Om)}\\
	=:&I(n) + II(n).
\end{align*}
By using Schwarz inequality, we estimate 
$I(n)$ as 
\begin{align}\label{EQ:FAC}
I(n)
\le& \theta^{-N/2} \Big( \sum_{|m - n| > \om} 
|m - n|^{- 2N}\Big)^{1/2}\times \\
& \qquad \Big( \sum_{|m - n| > \om} 
|\theta^{1/2}m - \theta^{1/2}n|^{2N} 
\big\|\chi_{C_{\theta}(m)} A \chi_{C_{\theta}(n)} f \big\|^2_{L^2(\Om)}\Big)^{1/2}.
\nonumber
\end{align}
The first factor of \eqref{EQ:FAC} is estimated
as 
\begin{align}\label{EQ:T}
\sum_{|m - n| > \om} |m - n|^{-2N} =& \sum_{|m|>\om}|m|^{-2N} \\
\le& C(d,N)\om^{-2N+d}.
\nonumber
\end{align}
In fact, since $N>d/2$, the right member of \eqref{EQ:T} is 
estimated as  
\begin{align*}
\sum_{|m|>\om}|m|^{-2N}
\le
& \prod _{j = 1} ^d 
  \sum _{|m_j| > \frac{\om}{\sqrt{d}}} |m_j| ^{-2N/d}
\\ \nonumber
\le
& C(d,N) \prod _{j = 1} ^d 
  \sum _{|m_j| > \frac{\om}{\sqrt{d}}} (1 + |m_j| ) ^{-2N/d}
\\ \nonumber
\leq  
& C(d,N) \prod _{j = 1} ^d 
  \int_{\{\sigma>\frac{\om}{\sqrt{d}}\}} \sigma^{-2N/d} \, d\sigma 
\\ \nonumber
\leq  
& C(d,N) \prod _{j = 1} ^d 
  \om ^{ -2N /d + 1}
\\ \nonumber
= 
& C(d,N) \om ^{-2N + d},\nonumber
\end{align*} 
which implies \eqref{EQ:T}. 
As to the second factor of \eqref{EQ:FAC}, 
noting that 
\[
\frac{|\theta^{1/2}m - \theta^{1/2}n|}{2}\le 
|x - \theta^{1/2}n|
\]
for any $x\in C_\theta(m)$, 
we estimate as 
\begin{align*}
& \sum_{|m - n| > \om} |\theta^{1/2}m - \theta^{1/2}n|^{2N} 
\big\|\chi_{C_{\theta}(m)} A \chi_{C_{\theta}(n)} f \big\|^2_{L^2(\Om)} \\
=&\sum_{|m - n| > \om} |\theta^{1/2}
m - \theta^{1/2}n|^{2N}\int_{C_{\theta}(m)}
|A \chi_{C_{\theta}(n)} f|^2\,dx\\
\le&2^{2N}\sum_{|m - n| > \om}\int_{C_{\theta}(m)}\left|| x - \theta^{1/2}n|^{N}A \chi_{C_{\theta}(n)} f\right|^2\,dx.
\end{align*}
Moreover, by the definition 
\eqref{EQ:Def-A} of $\vertiii{A}_N$, we estimate as 
\begin{align*}
\sum_{|m - n| > \om}\int_{C_{\theta}(m)}\left|| x - \theta^{1/2}n|^{N}A \chi_{C_{\theta}(n)} f\right|^2\,dx
\le&\left\| \left| \cdot - \theta^{1/2}n
\right|^{N} A \chi_{C_{\theta}(n)} f\right\|^2_{L^2(\Om)}\\
\le&{\vertiii{A}}^2_{N}  \big\| \chi_{C_{\theta}(n)} f \big\|^2_{L^2(\Om)}.
\end{align*}
Hence, summarizing the above two estimates, we deduce that
\begin{equation}\label{EQ:CM}
\sum_{|m - n| > \om} |\theta^{1/2}m - \theta^{1/2}n|^{2N} 
\big\|\chi_{C_{\theta}(m)} A \chi_{C_{\theta}(n)} f \big\|^2_{L^2(\Om)}
\le 2^{2N} \vertiii{A}^2_N \big\| \chi_{C_{\theta}(n)} f \big\|^2_{L^2(\Om)}.
\end{equation}
Thus we find from \eqref{EQ:FAC}--\eqref{EQ:CM} that 
\begin{equation}\label{EQ:a}
I(n)\le C(d,N)\theta^{-N/2} \om^{-(N-d/2)} {\vertiii{A}}_{N} 
	 \big\| \chi_{C_{\theta}(n)} f \big\|_{L^2(\Om)}. 
\end{equation}

Let us turn to the estimation of $II(n)$. We estimate 
\[
II(n)\le \Big( \sum_{|m - n| \le \om} 1 \Big)^{1/2} \Big( \sum_{|m - n| \le \om} 
\big\|\chi_{C_{\theta}(m)} A \chi_{C_{\theta}(n)} f \big\|^2_{L^2(\Om)} \Big)^{1/2}.
\]
Since 
\[
\sum_{|m - n| \le \om} 1 \le 1 +\om^d,
\]
we deduce from the same argument as in $I(n)$
\begin{align}\label{EQ:b}
	I\hspace{-.1em}I(n)&\le (1 + \om^{d/2}) \Big( \sum_{|m - n| \le \om} 
	 \big\|\chi_{C_{\theta}(m)} A \chi_{C_{\theta}(n)} f \big\|^2_{L^2(\Om)} \Big)^{1/2} \\
	& \le (1 + \om^{d/2}) \big\|A \chi_{C_{\theta}(n)} f \big\|_{L^2(\Om)}\notag \\
	& \le (1 + \om^{d/2}) \|A\|_{\mathscr{B}(L^2(\Om))} \big\| \chi_{C_{\theta}(n)} f \big\|_{L^2(\Om)}.\notag
\end{align}
Combining the estimates \eqref{EQ:a} and \eqref{EQ:b}, we get 
\begin{align*}
	& \sum_{m \in \mathbb{Z}^d} 
	\|\chi_{C_{\theta}(m)} A \chi_{C_{\theta}(n)} f\|_{L^2(\Om)} \\
	\le& C(d,N) \Big\{ \theta^{-N/2} \om^{-(N-d/2)} {\vertiii{A}}_{N} 
	 + (1 + \om^{d/2})\|A\|_{\mathscr{B}(L^2(\Om))}\Big\} \big\| \chi_{C_{\theta}(n)} f \big\|_{L^2(\Om)}.
\end{align*}
Finally, taking $\om = ({\vertiii{A}}_{N} / \|A\|_{\mathscr{B}(L^2(\Om))})^{1/N} \cdot \theta^{-1/2}$, we obtain the required estimate
\eqref{EQ:KEY}. 
The proof of Lemma \ref{Lem6.1} is complete.
\end{proof}

\begin{lem}\label{Lem6.2}
Assume that the measurable potential 
$V = V_+ - V_-$ satisfies  
$V_{\pm} \in K_{d}(\Om)$, and that 
$V_-$ satisfies assumption \eqref{EQ:katonorm3} in Lemma \ref{Lem:comparison}. 
Let $N$ be a positive integer, and 
let $\psi \in \Sscr(\mathbb{R})$. 
Then $\psi(R_{V,\theta}) \in \mathscr{A}_N$. 
Furthermore, 
there exists a constant $C_{\psi} > 0$ such that 
\begin{equation}\label{Lem6.2-1}
	\|\psi(R_{V,\theta})\|_{\mathscr{B}(L^2(\Om))} \le C_{\psi}, \quad \forall \theta > 0,
\end{equation}
and 
there exists a constant $C_N > 0$ such that 
\begin{align}\label{Lem6.2-2}
	{\vertiii{\psi(R_{V,\theta})}}_{N}
	 \le C_N \theta^{N/2}\int^{\infty}_{-\infty} (1+|t|)^{N} |\hat{\psi}(t)| \,dt, \quad \forall \theta > 0. 
\end{align}
\end{lem}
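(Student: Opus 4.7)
The first estimate \eqref{Lem6.2-1} is immediate from the spectral theorem: since Proposition~\ref{Prop:Self-adjoint} makes $H_V$ non-negative self-adjoint, $R_{V,\theta} = (\theta H_V + M)^{-1}$ is a bounded self-adjoint operator on $L^2(\Om)$ with spectrum contained in $[0, M^{-1}]$, and hence $\|\psi(R_{V,\theta})\|_{\mathscr{B}(L^2(\Om))} \le \sup_{0 \le \la \le M^{-1}}|\psi(\la)| \le \|\psi\|_{L^\infty(\R)} =: C_\psi$, a constant independent of $\theta$.

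The heart of the argument is \eqref{Lem6.2-2}. The plan is to combine the Fourier inversion formula
\[
\psi(R_{V,\theta}) = \frac{1}{2\pi}\int_{\R}\hat{\psi}(t)\,e^{-itR_{V,\theta}}\,dt,
\]
valid in the operator norm since $R_{V,\theta}$ is bounded self-adjoint and $\hat{\psi}\in\Sscr(\R)\subset L^1(\R)$, with the commutator bounds for $e^{-itR_{V,\theta}}$ supplied by Proposition~\ref{prop:Prop5.1}. Fix $n\in\Z^d$ and $j\in\{1,\ldots,d\}$, let $M_j$ denote multiplication by $x_j - \theta^{1/2}n_j$, and interpret $\mathrm{Ad}$ with $A=B=M_j$. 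A straightforward induction on $N$ yields the Leibniz-type identity
\[
M_j^N L \;=\; \sum_{k=0}^{N}\binom{N}{k}\mathrm{Ad}^k(L)\,M_j^{N-k};
\]
applying it to $L=\psi(R_{V,\theta})$, composing on the right with $\chi_{C_\theta(n)}$, and using $|x_j-\theta^{1/2}n_j|\le \theta^{1/2}/2$ on $C_\theta(n)$ bounds the rightmost multiplication trivially by $(\theta^{1/2}/2)^{N-k}$. Commuting $\mathrm{Ad}^k$ inside the Fourier integral (justified by dominated convergence and the rapid decay of $\hat{\psi}$), Proposition~\ref{prop:Prop5.1} together with the symmetry $\|\mathrm{Ad}^k(e^{itR_{V,\theta}})\|_{\mathscr{B}(L^2(\Om))} = \|\mathrm{Ad}^k(e^{-itR_{V,\theta}})\|_{\mathscr{B}(L^2(\Om))}$ (taking adjoints, since $M_j$ is real self-adjoint) gives
\[
\|\mathrm{Ad}^k(\psi(R_{V,\theta}))\|_{\mathscr{B}(L^2(\Om))} \;\le\; \frac{C}{2\pi}\,\theta^{k/2}\int_{\R}(1+|t|)^k|\hat{\psi}(t)|\,dt.
\]
Summing the binomial expansion, majorising $(1+|t|)^k\le(1+|t|)^N$, and extracting an $n,j$-independent constant $C_N$ produces the coordinatewise bound; the pointwise inequality $|x-\theta^{1/2}n|^N \le d^{N/2}\sum_{j=1}^d|x_j-\theta^{1/2}n_j|^N$ then upgrades it to the Euclidean power and yields both $\psi(R_{V,\theta})\in\mathscr{A}_N$ and \eqref{Lem6.2-2}.

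The only delicate point, and the main obstacle I foresee, is justifying the interchange of the unbounded multiplications $M_j^N$ and the iterated commutators $\mathrm{Ad}^k$ with the Bochner-type integral defining $\psi(R_{V,\theta})$. This is sidestepped by first verifying the Leibniz identity for the unitary family $L=e^{-itR_{V,\theta}}$, where Proposition~\ref{prop:Prop5.1} guarantees every $\mathrm{Ad}^k(e^{-itR_{V,\theta}})$ is $L^2$-bounded with polynomial growth in $|t|$; the rapid decay of $\hat{\psi}$ then makes the $t$-integration and norm estimation legitimate, and the whole identity descends to $\psi(R_{V,\theta})$ after integration.
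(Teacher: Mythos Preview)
Your proof is correct and follows essentially the same route as the paper: Fourier inversion $\psi(R_{V,\theta}) = c\int_{\R}\hat\psi(t)\,e^{-itR_{V,\theta}}\,dt$, the Leibniz identity $B^N L=\sum_k C(N,k)\,\mathrm{Ad}^k(L)\,A^{N-k}$ (the paper's Lemma~\ref{Lem:Recursive}) applied to $L=e^{-itR_{V,\theta}}$, and Proposition~\ref{prop:Prop5.1}. If anything you are more careful than the paper on two points it leaves implicit: the passage from the single-coordinate weight $|x_j-\theta^{1/2}n_j|^N$ to the Euclidean weight $|x-\theta^{1/2}n|^N$, and the extension of the commutator bound of Proposition~\ref{prop:Prop5.1} from $t>0$ to $t\in\R$ via adjoints.
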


\begin{proof}
The proof is based on the well-known formula:
\begin{equation}\label{Fourier}
	\psi(R_{V,\theta}) = (2 \pi)^{-1/2} \int^{\infty}_{-\infty} e^{-itR_{V,\theta}} \hat{\psi}(t) \,dt,
\end{equation}
where $\hat{\psi}$ is the Fourier transform of $\psi$ on $\mathbb{R}$.  
The estimate \eqref{Lem6.2-1} is an immediate consequence of the unitarity of $e^{-itR_{V,\theta}}$, the formula \eqref{Fourier} and $\psi \in \Sscr(\R)$. 

As to the estimate \eqref{Lem6.2-2}, 
applying the formula \eqref{Fourier}, 
we obtain 
\begin{align*}
	&{\vertiii{\psi(R_{V,\theta})}}_{N}\\
	=&\sup_{n \in \mathbb{Z}^d} \big\| |\cdot - \theta^{1/2}n|^{N} \psi(R_{V,\theta}) \chi_{C_{\theta}(n)}\big\|_{\Bscr(L^2(\Om))} \\
	\le& (2\pi)^{-1/2} \sup_{n \in \mathbb{Z}^d} \int^{\infty}_{-\infty} 
	\big\| |\cdot - \theta^{1/2}n|^{N} e^{-itR_{V,\theta}} \chi_{C_{\theta}(n)}\big\|_{\Bscr(L^2(\Om))} |\hat{\psi}(t)| \,dt. 
\end{align*}
Resorting to Lemma \ref{Lem:Recursive} for 
$A = B = x_j - \theta^{1/2} n_i$
 and $L = e^{-itR_{V,\theta}}$, we find from Proposition \ref{prop:Prop5.1} that
\begin{align*}
&\big\| |\cdot - \theta^{1/2}n|^{N} e^{-itR_{V,\theta}} \chi_{C_{\theta}(n)}\big\|_{\Bscr(L^2(\Om))} \\
\le& \sum^{N}_{k=0} C(N, k) \big\|\mathrm{Ad}^{k}(e^{-itR_{V,\theta}})\big\|_{\Bscr(L^2(\Om))} \big\||\cdot - \theta^{1/2}n|^{N-k} \chi_{C_{\theta}(n)}\big\|_{\Bscr(L^2(\Om))}\\
\le& \sum^{N}_{k=0}C(N, k) \theta^{k/2} 
(1+|t|)^{k} \theta^{(N-k)/2}; 
\end{align*}
thus we conclude that 
\begin{align*}
	{\vertiii{\psi(R_{V,\theta})}}_{N}\le \theta^{N/2} \sum^{N}_{k=0}C(N,k)
	\int^{\infty}_{-\infty}
	(1+|t|)^{k}|\hat{\psi}(t)| \,dt,
\end{align*}
which proves \eqref{Lem6.2-2}. The proof of Lemma \ref{Lem6.2} is finished. 
\end{proof}

We are now in a position to prove the main theorem.

\begin{proof}[Proof of Theorem \ref{Thm1.1}]
It suffices to show $L^1$-boundedness of $\va(\theta H_V)$. 
Let $\beta > d/4$ and $M>0$. 
Let us define $\psi \in \Sscr(\mathbb{R})$ as 
\[
	\psi (\mu) := \mu^{-\beta} \va(\mu^{-1} - M),\ \ \ \mu \in (0, 1/M].
\]
Then we can write
\begin{equation}\label{psi}
	\psi \big((\la + M)^{-1} \big) = \va(\la) (\la + M)^{\beta},\ \ \ \la \ge 0.
\end{equation}
Now we estimate, by H\"older's inequality and the definition of amalgam spaces $\ell^p(L^q)_\theta$, 
\begin{align*}
	\|\va(\theta H_V) f\|_{L^1(\Om)}& = \sum_{n\in\Z^d}\|\va(\theta H_V) f\|_{L^1(C_{\theta}(n))}\\
	\le&\sum_{n\in\Z^d}|C_\theta(n)|^{1/2}|\|\va(\theta H_V) f\|_{L^2(C_{\theta}(n))}\\
	\le& \theta^{d/4} \|\va(\theta H_V) f\|_{\ell^1(L^2)_{\theta}},
\end{align*}
where we used $|C_\theta(n)|^{1/2}=\theta^{d/4}$. 
The right member in the above inequality is estimates as 
\begin{align*}
\|\va(\theta H_V) f\|_{\ell^1(L^2)_{\theta}}=& \|\va(\theta H_V) (\theta H_V + M )^{\beta} 
R^{\beta}_{V,\theta} f\|_{\ell^1(L^2)_{\theta}}  \\
	=& \|\psi(R_{V,\theta}) R^{\beta}_{V,\theta} f\|_{\ell^1(L^2)_{\theta}}  \\
	\le& \sum_{n \in \mathbb{Z}^d} \sum_{m \in \mathbb{Z}^d} \big\|\chi_{C_{\theta}(m)} 
	\psi(R_{V,\theta}) \chi_{C_{\theta}(n)} R^{\beta}_{V,\theta} f \big\|_{L^2(\Om)},
\end{align*}
where we used \eqref{psi} in the second step. 
Resorting to Lemma \ref{Lem6.1} for $A$ and $f$ replaced by $\psi(R_{V,\theta})$ and 
$R^{\beta}_{V,\theta} f $, respectively, we estimate 
\begin{align*}
	&\sum_{m \in \mathbb{Z}^d} \|\chi_{C_{\theta}(m)} \psi(R_{V,\theta}) \chi_{C_{\theta}(n)} 
	R^{\beta}_{V,\theta} f\|_{L^2(\Om)} \\
	 \le& C \Big( \|\psi(R_{V,\theta})\|_{\Bscr(L^2(\Om))} 
	+ \theta^{-d/4} {\vertiii{\psi(R_{V,\theta})}}^{d/2N}_{N} \|\psi(R_{V,\theta})\|^{1-d/2N}_{\Bscr(L^2(\Om))} \Big)
	 \|\chi_{C_{\theta}(n)} R^{\beta}_{V,\theta} f\|_{L^2(\Om)}
\end{align*}
for any $N>d/2$.
Thus we obtain
\begin{align*}
	&\|\va(\theta H_V) f\|_{L^1(\Om)}\\[2mm]
	 \le& C \theta^{d/4} 
	\Big( \|\psi(R_{V,\theta})\|_{\Bscr(L^2(\Om))} + \theta^{-d/4} {\vertiii{\psi(R_{V,\theta})}}^{d/2N}_{N} \|\psi(R_{V,\theta})\|^{1-d/2N}_{\Bscr(L^2(\Om))} \Big) 
	\|R^{\beta}_{V,\theta} f\|_{\ell^1(L^2)_{\theta}}
\end{align*}
for any $N>d/2$.
Applying Theorem \ref{Thm4.1} and Lemma \ref{Lem6.2} to the above estimate, 
we conclude that 
\begin{align*}
	\|\va(\theta H_V) f\|_{L^1(\Om)}& \le C \theta^{d/4} \left\{ 1 + \theta^{-d/4} \cdot (\theta^{N/2})^{d/2N} \right\} \theta^{-d/4} \|f\|_{L^1(\Om)}\\[2mm]
	& \le C \|f\|_{L^1(\Om)},
\end{align*}
where the constant $C$ is independent of $\theta$. The proof of Theorem \ref{Thm1.1} is complete.
\end {proof}


\section{A final remark}
As a consequence of Theorems 
\ref{Thm1.1} and \ref{Thm4.1}, 
we have $L^p$-$L^q$-boundedness of $\va(\theta H_V)$. 
$L^p$-$L^q$-boundedness of $\va(\theta H_V)$ 
is useful to prove the embedding theorem for 
Besov spaces. 

\begin{prop}\label{Prop7.1}
Let $\va$ and $V$ be as in Theorem \ref{Thm1.1}. 
Then there exists a constant $C = C(d, \va) > 0$ such that for $1 \le p \le q \le \infty$,   
\[
	\|\va(\theta H_V)\|_{\mathscr{B}(L^p(\Om), L^q(\Om))} \le C {\theta}^{-(d/2)(1/p-1/q)},\quad \forall \theta > 0.
\]
\end{prop}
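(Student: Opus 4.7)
The plan is to factor $\varphi(\theta H_V)$ as a composition of two operators, one handled by Theorem \ref{Thm1.1} ($L^q$-boundedness, uniform in $\theta$) and the other by Theorem \ref{Thm4.1} ($L^p\!\to\!L^q$ with the correct $\theta$-scaling). Concretely, fix $M>0$ and choose a positive integer $\beta > (d/2)(1/p-1/q)$. Define
\[
\psi(\mu) := \varphi(\mu)\,(\mu + M)^{\beta}, \quad \mu\in\R.
\]
Since $\varphi\in\Sscr(\R)$ and $(\mu+M)^{\beta}$ is a polynomial, $\psi\in\Sscr(\R)$. The key algebraic identity $\varphi(\mu) = \psi(\mu)(\mu+M)^{-\beta}$ on $[0,\infty)$, combined with the fact that $H_V\ge 0$ (Proposition \ref{Prop:Self-adjoint}), so that the spectrum of $\theta H_V$ lies in $[0,\infty)$, gives via the spectral theorem the operator identity
\[
\varphi(\theta H_V) = \psi(\theta H_V)\,(\theta H_V + M)^{-\beta} \quad\text{on } L^2(\Om).
\]

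Next I would estimate each factor separately. Theorem \ref{Thm4.1} applied with $z=-M$ yields
\[
\bigl\| (\theta H_V + M)^{-\beta} f \bigr\|_{L^q(\Om)} \le C\,\theta^{-(d/2)(1/p-1/q)} \|f\|_{L^p(\Om)},
\]
since $\beta > (d/2)(1/p-1/q)$. Theorem \ref{Thm1.1} applied to $\psi\in\Sscr(\R)$ gives
\[
\bigl\| \psi(\theta H_V) g \bigr\|_{L^q(\Om)} \le C\,\|g\|_{L^q(\Om)}, \quad \forall \theta>0.
\]
Composing for $f\in L^p(\Om)\cap L^2(\Om)$ produces the desired bound
\[
\|\varphi(\theta H_V)f\|_{L^q(\Om)} \le C\,\theta^{-(d/2)(1/p-1/q)}\|f\|_{L^p(\Om)},
\]
which then extends to all $f\in L^p(\Om)$ by density when $p<\infty$, and to $p=\infty$ by duality (using that the adjoint of $\varphi(\theta H_V)$ is $\overline{\varphi}(\theta H_V)$, which enjoys the same $L^1\!\to\!L^{q'}$ bound by the case already handled).

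The only subtle point is the justification of the operator identity in the composition step: $\varphi(\theta H_V)$ and $\psi(\theta H_V)$ are initially defined via the spectral theorem on $L^2(\Om)$, and Theorems \ref{Thm1.1} and \ref{Thm4.1} supply their unique continuous extensions to the relevant $L^p$ spaces. The identity $\varphi(\theta H_V) = \psi(\theta H_V)(\theta H_V + M)^{-\beta}$ holds on $L^2$ by the spectral theorem, and since $(\theta H_V + M)^{-\beta}$ maps $L^p\cap L^2$ into $L^q\cap L^2$, one can compose on this dense subspace and then extend. This is a routine density argument; no genuine analytic obstacle remains beyond the two ingredient theorems already proved.
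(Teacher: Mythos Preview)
Your proof is correct and follows essentially the same approach as the paper: factor $\varphi(\theta H_V)=\psi(\theta H_V)(\theta H_V+M)^{-\beta}$ with $\psi(\mu)=(\mu+M)^\beta\varphi(\mu)\in\Sscr(\R)$, apply Theorem~\ref{Thm1.1} to the first factor and Theorem~\ref{Thm4.1} (with $z=-M$) to the second. Your choice of integer $\beta$ and the explicit density/duality remarks are minor elaborations on the paper's argument, not a different route.
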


\begin{proof}
Let us define $\tilde{\va}\in \mathscr{S}(\mathbb{R})$ as
\[
	\tilde{\va}(\la) = (\la + M)^{\beta} \va(\la), \quad \la \ge 0. 
\]
By Theorems \ref{Thm1.1} and \ref{Thm4.1}, 
for $1 \le p \le q \le \infty$ and 
$\beta > (d/2)(1/p -1/q)$, we estimate 
\begin{align*}
	\|\va(\theta H_V)\|_{\mathscr{B}(L^p(\Om), L^q(\Om))}& = 
	\|\va(\theta H_V) (\theta H_V + M)^{\beta} (\theta H_V + M)^{-\beta}
	\|_{\mathscr{B}(L^p(\Om), L^q(\Om))}\\[2mm]
	& \le \|\tilde{\va}(\theta H_V)\|_{\mathscr{B}(L^q(\Om))} 
	\|(\theta H_V + M)^{-\beta}\|_{\mathscr{B}(L^p(\Om), L^q(\Om))}\\[2mm]
	& \le C {\theta}^{-(d/2)(1/p-1/q)}. 
\end{align*}
The proof of Proposition \ref{Prop7.1} is complete.
\end{proof} 

\appendix
\section{
(The Young inequality)
} \label{App:AppendixA}
In this appendix we introduce the Young inequality for scaled amalgam spaces.

\begin{lem}\label{Lem:Young}
Let $d\ge1$, and let $1 \le p,p_1,p_2,q,q_1,q_2 \le \infty$ be such that 
\[
\text{$\frac{1}{p_1} +\frac{1}{p_2} - 1 = \frac{1}{p}$ \quad and \quad 
$\frac{1}{q_1} + \frac{1}{q_2} - 1 = \frac{1}{q}$.}
\]
If $f \in \ell^{p_1}(L^{q_1})_{\theta}(\R^d)$ and $g \in \ell^{p_2}(L^{q_2})_{\theta}(\R^d)$, 
then $\int_{\R^d}f(\cdot-y) g(y)\, dy \in \ell^{p}(L^{q})_{\theta}(\R^d)$ and 
\begin{equation}\label{EQ:Young}
	\left\|\int_{\R^d}f(\cdot-y) g(y)\, dy
	\right\|_{\ell^p(L^q)_{\theta}(\R^d)} \le 3^d \|f\|_{\ell^{p_1}(L^{q_1})_{\theta}(\R^d)} 
	\|g\|_{\ell^{p_2}(L^{q_2})_{\theta}(\R^d)}.
\end{equation}
\end{lem}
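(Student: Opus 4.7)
The plan is to combine the classical Young inequality on $\R^d$ with a geometric bookkeeping argument on the cubes defining $\ell^p(L^q)_{\theta}$. The scaling parameter $\theta$ plays no essential role, since Young's inequality is scale invariant and the factor $\theta^{1/2}$ scales both sides identically; all estimates below are written so that the $\theta$-dependence is absorbed in the definitions of the localized pieces.

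First, I would decompose both functions into localized pieces by setting $f_n := f \chi_{C_{\theta}(n)}$ and $g_m := g \chi_{C_{\theta}(m)}$, so that $f = \sum_{n \in \Z^d} f_n$, $g = \sum_{m \in \Z^d} g_m$, and pointwise
\[
(f * g)(x) = \sum_{n, m \in \Z^d} (f_n * g_m)(x).
\]
The key geometric observation is
\[
\mathrm{supp}(f_n * g_m) \subset C_{\theta}(n) + C_{\theta}(m) \subset \theta^{1/2}(n+m) + [-\theta^{1/2}, \theta^{1/2}]^d,
\]
which meets $C_{\theta}(k)$ only when $|k_\nu - (n_\nu + m_\nu)| \le 1$ in every coordinate $\nu$. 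Writing $\ell := k - (n+m)$, this confines $\ell$ to the finite set $\{-1, 0, 1\}^d$ of cardinality $3^d$.

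Second, for each pair $(n,m)$ the classical Young inequality on $\R^d$ (using $1/q_1 + 1/q_2 - 1 = 1/q$) gives
\[
\|f_n * g_m\|_{L^q(\R^d)} \le \|f_n\|_{L^{q_1}(\R^d)} \|g_m\|_{L^{q_2}(\R^d)}.
\]
Introducing the non-negative sequences $a_n := \|f_n\|_{L^{q_1}(\R^d)}$ and $b_m := \|g_m\|_{L^{q_2}(\R^d)}$, the support-overlap analysis yields, for every $k \in \Z^d$,
\[
\|f * g\|_{L^q(C_{\theta}(k))} \le \sum_{\ell \in \{-1,0,1\}^d} (a * b)(k - \ell),
\]
where $a * b$ denotes the discrete convolution on $\Z^d$.

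Third, I would take the $\ell^p(\Z^d)$ norm in $k$, pull the finite sum over $\ell$ outside by Minkowski's inequality (using translation invariance of $\ell^p(\Z^d)$), and close with the classical discrete Young inequality (using $1/p_1 + 1/p_2 - 1 = 1/p$):
\[
\|f * g\|_{\ell^p(L^q)_{\theta}(\R^d)} \le \sum_{\ell \in \{-1,0,1\}^d} \|a * b\|_{\ell^p(\Z^d)} \le 3^d \|a\|_{\ell^{p_1}(\Z^d)} \|b\|_{\ell^{p_2}(\Z^d)},
\]
which is exactly $3^d \|f\|_{\ell^{p_1}(L^{q_1})_{\theta}(\R^d)} \|g\|_{\ell^{p_2}(L^{q_2})_{\theta}(\R^d)}$ by the definition of the scaled amalgam norms. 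The main (and only mildly delicate) step is the support-overlap bookkeeping which produces the constant $3^d$; the boundary cases where some exponent equals $\infty$ require only cosmetic adjustments, replacing the corresponding $\ell^p$ or $L^q$ sum by a supremum.
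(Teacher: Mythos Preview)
Your argument is correct. The paper does not actually prove this lemma: it simply refers the reader to Fournier and Stewart \cite{FS} for the proof. The direct argument you give---localizing both factors to the cubes $C_\theta(n)$, using the classical Young inequality on each piece, bookkeeping the $3^d$ possible overlaps of $C_\theta(n)+C_\theta(m)$ with the target cube $C_\theta(k)$, and then closing with the discrete Young inequality on $\Z^d$---is precisely the standard proof one finds in that reference (for unit cubes; the scaling by $\theta^{1/2}$ is, as you note, irrelevant). So you have supplied what the paper outsourced, and your constant $3^d$ matches the stated one. The only cosmetic point is that the closed cubes $C_\theta(n)$ overlap on their faces, so the decomposition $f=\sum_n f_n$ holds only almost everywhere; this is harmless for the $L^q$ estimates.
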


For the proof of Lemma \ref{Lem:Young}, see Fournier and Stewart \cite{FS}.

\section{
(Recursive formula of operators)
} \label{App:AppendixB}
In this appendix 
we introduce some formulas on the operator 
$\mathrm{Ad}$. 

\begin{lem}[Lemma 3.1 from \cite{JN2}]\label{Lem:Recursive}
Let $X$ and $Y$ be topological vector spaces, 
and let $A$ and $B$ be continuous 
linear operators from $X$ and $Y$ into 
themselves, respectively. If $L$ is a 
continuous linear operator from $X$ into $Y$, 
then there exists a set of constants 
$\left\{ C(n, m) \big|\, n \ge 0,\, 
0 \le m \le n \right\}$ such that 
\begin{equation}\label{EQ:Recursive}
B^n L = \sum^{n}_{m=0} C(n, m) \mathrm{Ad}^m(L) A^{n-m}.
\end{equation}
\end{lem}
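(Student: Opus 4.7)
My plan is to prove the identity by induction on $n$, with the constants produced explicitly by a Pascal-type recursion so that the existence claim is clear.

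For the base case $n=0$, the statement is just $L = \mathrm{Ad}^0(L)$, so take $C(0,0)=1$. For the inductive step, the key identity comes directly from the definition $\mathrm{Ad}^{m+1}(L) = B\,\mathrm{Ad}^m(L) - \mathrm{Ad}^m(L)\,A$, namely
\begin{equation}\label{EQ:Bcommute}
B\,\mathrm{Ad}^m(L) = \mathrm{Ad}^{m+1}(L) + \mathrm{Ad}^m(L)\,A, \qquad m \ge 0.
\end{equation}
This is the only nontrivial computational ingredient, and it is immediate from the recursive definition of $\mathrm{Ad}$.

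Assuming inductively that $B^n L = \sum_{m=0}^n C(n,m)\,\mathrm{Ad}^m(L)\,A^{n-m}$, I would apply $B$ from the left and use \eqref{EQ:Bcommute} term by term:
\[
B^{n+1} L = \sum_{m=0}^n C(n,m)\,B\,\mathrm{Ad}^m(L)\,A^{n-m}
= \sum_{m=0}^n C(n,m)\bigl(\mathrm{Ad}^{m+1}(L)+\mathrm{Ad}^m(L)A\bigr)A^{n-m}.
\]
Reindexing the first sum ($m \mapsto m-1$) and collecting, one obtains
\[
B^{n+1}L = \sum_{m=0}^{n+1} \bigl(C(n,m-1)+C(n,m)\bigr)\,\mathrm{Ad}^m(L)\,A^{n+1-m},
\]
with the usual convention $C(n,-1)=C(n,n+1)=0$. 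Defining $C(n+1,m):=C(n,m-1)+C(n,m)$ closes the induction. (Incidentally, with $C(0,0)=1$ this Pascal recursion gives $C(n,m)=\binom{n}{m}$, but the lemma only claims existence.)

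I do not foresee a genuine obstacle here: the identity \eqref{EQ:Bcommute} is the heart of the matter and is essentially a restatement of the definition, so the rest is bookkeeping. The only point that requires a touch of care is the reindexing of the shifted sum and the boundary terms $m=0$ and $m=n+1$, which are handled cleanly by the convention $C(n,-1)=C(n,n+1)=0$.
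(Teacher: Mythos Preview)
Your proof is correct. The paper itself does not give a proof of this lemma; it simply cites it as Lemma~3.1 from \cite{JN2}. Your inductive argument is the standard one: the identity $B\,\mathrm{Ad}^m(L)=\mathrm{Ad}^{m+1}(L)+\mathrm{Ad}^m(L)\,A$ follows because left multiplication by $B$ and right multiplication by $A$ commute on the space of operators from $X$ to $Y$, so $\mathrm{Ad}^{m+1}(L)=\mathrm{Ad}^m(BL-LA)=B\,\mathrm{Ad}^m(L)-\mathrm{Ad}^m(L)\,A$. The Pascal recursion then indeed produces $C(n,m)=\binom{n}{m}$, which is more than the lemma asks for.
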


We shall derive two kind of recursive formulas of operator 
\begin{equation}\label{EQ:RE}
R_{V,\theta}=(\theta H_V+M)^{-1},
\end{equation}
where $M$ is a certain large constant.
Hereafter we put
\[
X=\Dscr(\Om), \quad Y=\Dscr'(\Om),
\]
where 
we denote by $\mathscr{D} (\Omega)$ the totality of the test functions on $\Omega$, and 
by $\mathscr{D}' (\Omega)$ its dual space, 
and we take
\[
A=B=x_j-\theta^{1/2}n_j 
\quad \text{for some $j\in\{1,\cdots,d\}$}. 
\]

\begin{lem}\label{Lem:Recursive-Rtheta1}
Let $V$ be a measurable function on $\Omega$ 
such that $H_V$ is a self-adjoint operator 
on $L^2(\Omega)$ whose domain is given by 
\[
\mathcal{D}(H_V) = \big\{ u \in H^1_0(\Om)\, \big| \, H_V u \in L^2(\Om) \big\}.
\]
Let $M$ be an element of resolvent set of 
$-\theta H_V$, and let us denote by $R_{V,\theta}$ the resolvent operator defined by \eqref{EQ:RE}.
Then the sequence 
$\{\mathrm{Ad}^k(R_{V,\theta})\}_{k=0}^\infty$ 
of operators satisfies 
the following recursive formula{\rm :}
\begin{equation}\label{EQ:Recursive-Rtheta1}
\mathrm{Ad}^0(R_{V,\theta})=R_{V,\theta}, \quad \mathrm{Ad}^1(R_{V,\theta}) =-2\theta R_{V,\theta}\partial_{x_j}R_{V,\theta},
\end{equation}
and for $k\ge2$, 
\begin{equation}\label{EQ:Recursive-Rthetak}
\mathrm{Ad}^k(R_{V,\theta}) = \theta \left\{-2 k \mathrm{Ad}^{k-1}(R_{V,\theta})
 \partial_{x_j} R_{V,\theta} + k(k-1) \mathrm{Ad}^{k-2}(R_{V,\theta}) R_{V,\theta}\right\}. 
\end{equation}
\end{lem}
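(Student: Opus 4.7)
The plan is to derive both formulas by exploiting the resolvent identity $R_{V,\theta}(\theta H_V + M) = I$ on $X = \Dscr(\Om)$ and then applying the commutator $[A,\cdot]$ iteratively, treating it as a derivation. Note first that the shift by $\theta^{1/2} n_j$ is a scalar, so $\mathrm{Ad}^1(L) = BL - LA = [x_j, L]$, and inductively $\mathrm{Ad}^k(L)$ is the $k$-fold iterated commutator $[x_j,[x_j,\ldots,[x_j,L]\ldots]]$. Two elementary commutator computations will drive everything:
\begin{equation*}
[x_j, \theta H_V + M] = -\theta [x_j, \De] = 2\theta\,\partial_{x_j},
\qquad
[x_j, \partial_{x_j}] = -1,
\end{equation*}
the first being the standard identity $[x_j,\De] = -2\partial_{x_j}$ and using that $V$ commutes with $x_j$.

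For the base case, I apply $[x_j,\cdot]$ to the identity $R_{V,\theta}(\theta H_V + M) = I$. By the Leibniz rule,
\begin{equation*}
\mathrm{Ad}^1(R_{V,\theta})(\theta H_V+M) + R_{V,\theta}\,[x_j,\theta H_V+M] = 0,
\end{equation*}
and substituting the first commutator identity and right-multiplying by $R_{V,\theta}$ yields $\mathrm{Ad}^1(R_{V,\theta}) = -2\theta\, R_{V,\theta}\,\partial_{x_j} R_{V,\theta}$, which is \eqref{EQ:Recursive-Rtheta1}.

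For the inductive step, I claim and prove simultaneously (by induction on $k$) the stronger identity
\begin{equation*}
\mathrm{Ad}^k(R_{V,\theta})(\theta H_V+M) = -2k\theta\,\mathrm{Ad}^{k-1}(R_{V,\theta})\,\partial_{x_j} + k(k-1)\theta\,\mathrm{Ad}^{k-2}(R_{V,\theta}),
\end{equation*}
with the convention $\mathrm{Ad}^{-1}\equiv 0$. Assuming this at level $k$, I apply $[x_j,\cdot]$ to both sides; on the left the Leibniz rule produces $\mathrm{Ad}^{k+1}(R_{V,\theta})(\theta H_V+M) + 2\theta\,\mathrm{Ad}^k(R_{V,\theta})\partial_{x_j}$, while on the right the two commutator identities and the Leibniz rule give $-2k\theta\,\mathrm{Ad}^k(R_{V,\theta})\partial_{x_j} + 2k\theta\,\mathrm{Ad}^{k-1}(R_{V,\theta}) + k(k-1)\theta\,\mathrm{Ad}^{k-1}(R_{V,\theta})$. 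Combining and right-multiplying by $R_{V,\theta}$ yields the identity at level $k+1$ with the coefficients $-2(k+1)$ and $(k+1)k$, which is precisely \eqref{EQ:Recursive-Rthetak} shifted by one.

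The only conceptual care needed is to justify the derivation identity $[x_j,ST] = [x_j,S]T + S[x_j,T]$ on the distribution pairings between $X = \Dscr(\Om)$ and $Y = \Dscr'(\Om)$, together with the fact that $\partial_{x_j} R_{V,\theta}$ and $R_{V,\theta}$ map into $H^1_0(\Om)\subset Y$ so that the formal algebra is rigorous; this is the only point where one must pay attention, but it follows from self-adjointness of $H_V$ (Proposition \ref{Prop:Self-adjoint}) and the fact that $M$ lies in the resolvent set. Bookkeeping of binomial-type coefficients is the main routine obstacle, and the induction hypothesis above is designed precisely so that the coefficients $-2k$ and $k(k-1)$ propagate cleanly.
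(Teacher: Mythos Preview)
Your proof is correct and uses the same ingredients as the paper --- the commutator identities $[x_j,\theta H_V+M]=2\theta\partial_{x_j}$, $[x_j,\partial_{x_j}]=-1$, the Leibniz rule, and induction --- but organized slightly differently. The paper first proves the intermediate identity
\[
\mathrm{Ad}^k(R_{V,\theta}) = - \sum_{m=0}^{k-1} \binom{k}{m}\,\mathrm{Ad}^m(R_{V,\theta})\,\mathrm{Ad}^{k-m}(\theta H_V)\,R_{V,\theta}
\]
by induction, and only afterwards specializes using $\mathrm{Ad}^{\ell}(\theta H_V)=0$ for $\ell\ge 3$ to collapse the sum to two terms. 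You bypass this by carrying the two-term identity $\mathrm{Ad}^k(R_{V,\theta})(\theta H_V+M) = -2k\theta\,\mathrm{Ad}^{k-1}(R_{V,\theta})\partial_{x_j} + k(k-1)\theta\,\mathrm{Ad}^{k-2}(R_{V,\theta})$ directly through the induction, which is marginally more economical. On the domain issue, the paper spells out the $k=1$ step rigorously via a duality pairing ${}_{\Dscr'(\Omega)}\langle\cdot,\cdot\rangle_{\Dscr(\Omega)}$ and an $H^1_0$-approximation argument; your closing paragraph correctly identifies this as the only delicate point, and the same justification carries over verbatim.
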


\begin{proof}
When $k=0$, the first equation in 
\eqref{EQ:Recursive-Rtheta1}
is trivial. Hence it is sufficient to prove the case when $k>0$. 
For the sake of simplicity, we perform 
a formal argument without considering the 
domain of operators. 
The rigorous argument is given in the final part. 

Let us introduce the generalized 
binomial coefficients $\Gamma(k,m)$ 
as follows:
\begin{equation*}
\Gamma(k,m) =
\begin{cases}
	\displaystyle{\frac{k!}{(k-m)!m!}},\ \ \ &k \ge m \ge 0,\\
	\ 0,\ \ \ &k < m \text{ or } k < 0.
\end{cases}
\end{equation*}
Once the following recursive formula 
is established:  
\begin{equation}\label{EQ:bbb}
\mathrm{Ad}^k(R_{V,\theta}) = - \sum^{k-1}_{m=0} \Gamma(k,m) \mathrm{Ad}^m(R_{V,\theta}) \mathrm{Ad}^{k-m}(\theta H_V) R_{V,\theta},\quad k=1,2,\cdots,
\end{equation}
identities \eqref{EQ:Recursive-Rtheta1} and \eqref{EQ:Recursive-Rthetak} are an 
immediate consequence of \eqref{EQ:bbb}, 
since 
\[
\mathrm{Ad}^1(\theta H_V) = 2 \theta \partial_{x_j},
\quad \mathrm{Ad}^2(\theta H_V) 
= -2\theta,\quad \mathrm{Ad}^k(\theta H_V) = 0,
\quad k \ge 3.
\]
Hence, all we have to do is to 
prove \eqref{EQ:bbb}. 
We proceed the argument by induction. 
For $k=1$, it can be readily checked that 
\begin{align*}
	 \mathrm{Ad}^1(R_{V,\theta})=& x_jR_{V,\theta} - R_{V,\theta} x_j \\
	 =&R_{V,\theta}(\theta H_V + M )x_jR_{V,\theta} - R_{V,\theta} x_j (\theta H_V + M )R_{V,\theta} \\
	 =&R_{V,\theta}\left(\theta H_Vx_j-x_j \cdot \theta H_V \right)R_{V,\theta}\\
	 =& - R_{V,\theta} \mathrm{Ad}^1(\theta H_V) R_{V,\theta}\\
	 =&- \Gamma(1,0) \mathrm{Ad}^0(R_{V,\theta}) \mathrm{Ad}^1(\theta H_V) R_{V,\theta}.
\end{align*}
Hence \eqref{EQ:bbb} is true for $k=1$. 
Let us suppose that \eqref{EQ:bbb} 
holds for $k =1, \ldots, \ell$. 
Writing
\begin{equation}\label{EQ:F1}
\mathrm{Ad}^{\ell+1}(R_{V,\theta})=x_j \mathrm{Ad}^{\ell}(R_{V,\theta})-\mathrm{Ad}^{\ell}
(R_{V,\theta})x_j,
\end{equation}
we see that the first term becomes 
\begin{align*}
&x_j \mathrm{Ad}^{\ell}(R_{V,\theta})\\
=&x_j \bigg\{-\sum^{\ell-1}_{m=0}
\Gamma(\ell,m) \mathrm{Ad}^m(R_{V,\theta}) 
\mathrm{Ad}^{\ell-m}(\theta H_V) 
\bigg\}R_{V,\theta}
\\
=&- \sum^{\ell-1}_{m=0} \Gamma(\ell,m)\left\{
\mathrm{Ad}^{m+1}(R_{V,\theta})\mathrm{Ad}^{\ell-m}(\theta H_V)+\mathrm{Ad}^m(R_{V,\theta}) \mathrm{Ad}^{\ell-m+1}(\theta H_V)\right\}R_{V,\theta}\\
& - \sum^{\ell-1}_{m=0} \Gamma(\ell,m) \mathrm{Ad}^m(R_{V,\theta}) \mathrm{Ad}^{\ell-m}(\theta H_V) x_jR_{V,\theta}\\
=&:I_1+I_2.
\end{align*}
Here $I_1$ is written as 
\begin{align*}
I_1
=&- \sum^{\ell}_{m=1} \Gamma(\ell,m-1) \mathrm{Ad}^{m}(R_{V,\theta})\mathrm{Ad}^{\ell-m+1}(\theta H_V)R_{V,\theta}\\
& -\sum^{\ell-1}_{m=0} \Gamma(\ell,m) \mathrm{Ad}^m(R_{V,\theta}) \mathrm{Ad}^{\ell-m+1}(\theta H_V)R_{V,\theta}\\
=&- \sum^{\ell}_{m=0} \Gamma(\ell,m-1) \mathrm{Ad}^{m}(R_{V,\theta})\mathrm{Ad}^{\ell+1-m}(\theta H_V)R_{V,\theta}\\
&- \sum^{\ell}_{m=0} \Gamma(\ell,m) \mathrm{Ad}^m(R_{V,\theta}) \mathrm{Ad}^{\ell-m+1}(\theta H_V)+\mathrm{Ad}^{\ell}(R_{V,\theta})\mathrm{Ad}^1(\theta H_V)R_{V,\theta}\\
=&- \sum^{\ell}_{m=0} \Gamma(\ell+1,m) \mathrm{Ad}^m(R_{V,\theta}) \mathrm{Ad}^{\ell+1-m}(\theta H_V)+\mathrm{Ad}^{\ell}(R_{V,\theta})\mathrm{Ad}^1(\theta H_V)R_{V,\theta}, 
\end{align*}
where we used 
\[
\Gamma(\ell,m-1)+\Gamma(\ell,m)=
\Gamma(\ell+1,m)
\]
in the last step. As to $I_2$, we write as 
\begin{align*}
I_2=&- \bigg\{\sum^{\ell-1}_{m=0} \Gamma(\ell,m) \mathrm{Ad}^m(R_{V,\theta}) \mathrm{Ad}^{\ell-m}(\theta H_V) R_{V,\theta}\bigg\} (\theta H_V + M ) x_j R_{V,\theta}\\
	=& \mathrm{Ad}^{\ell}(R_{V,\theta}) (\theta H_V + M ) x_jR_{V,\theta}.
\end{align*}
Hence, summarizing the previous equations,
we get 
\begin{align*}
x_j\mathrm{Ad}^\ell(R_{V,\theta})
=&
- \sum^{\ell}_{m=0} \Gamma(\ell+1,m) 
\mathrm{Ad}^m(R_{V,\theta}) 
\mathrm{Ad}^{\ell+1-m}(\theta H_V)\\
&+\mathrm{Ad}^{\ell}(R_{V,\theta}) \left\{\mathrm{Ad}^1(\theta H_V)+
(\theta H_V + M ) x_j\right\}R_{V,\theta}.
\end{align*}
Therefore, going back to \eqref{EQ:F1}, 
and noting 
\[
\mathrm{Ad}^1(\theta H_V)+ (\theta H_V + M ) x_j
= x_j(\theta H_V + M ),
\]
we conclude that 
\begin{align*}
\mathrm{Ad}^{\ell+1}(R_{V,\theta})
=&- \sum^{\ell}_{m=0} 
\Gamma(\ell+1,m) \mathrm{Ad}^m(R_{V,\theta}) 
\mathrm{Ad}^{\ell+1-m}(\theta H_V)\\ 
& +\mathrm{Ad}^{\ell}(R_{V,\theta})\left\{\mathrm{Ad}^1(\theta H_V)+ (\theta H_V + M ) x_j\right\}R_{V,\theta}
-\mathrm{Ad}^{\ell}(R_{V,\theta})x_j\\
=&- \sum^{\ell}_{m=0} 
\Gamma(\ell+1,m) \mathrm{Ad}^m(R_{V,\theta}) 
\mathrm{Ad}^{\ell+1-m}(\theta H_V)\\ 
& \qquad +\mathrm{Ad}^{\ell}(R_{V,\theta}) x_j(\theta H_V + M ) R_{V,\theta}
-\mathrm{Ad}^{\ell}(R_{V,\theta})x_j\\
=&- \sum^{\ell}_{m=0} \Gamma(\ell+1,m) \mathrm{Ad}^m(R_{V,\theta}) \mathrm{Ad}^{\ell+1-m}(\theta H_V).
\end{align*}
Hence \eqref{EQ:bbb} is true for $k=\ell+1$. \\

The above proof is formal in the sense that the domain of operators is not taken into account in the argument.  
In fact, even for $f\in C^\infty_0(\Om)$, each $x_jR_{V,\theta} f$ does not necessarily 
belong to the domain of $H_V$, 
since we only know the fact that
$$R_{V,\theta} f \in \mathcal D (H_V) = 
\{u\in H_0^1 (\Omega)\, |\, H_V u \in L^2 (\Omega) \}.
$$
Therefore, we should perform the argument 
by using a duality pair 
${}_{\Dscr'(\Omega)}\langle \cdot, 
\cdot\rangle_{\Dscr(\Omega)}$
of $\mathscr{D}^\prime(\Omega)$ and $\mathscr{D}(\Omega)$
in a rigorous way. 
We may prove the lemma 
only for $k=1$. For, as to the case $k>1$, 
the argument is done in a similar manner. 
Now we write
\begin{align*}
{}_{\Dscr'(\Omega)}\langle \mathrm{Ad}^1(R_{V,\theta})f,g\rangle_{\Dscr(\Omega)} &= \langle R_{V,\theta} f, x_j g\rangle_{L^2(\Om)} - \langle x_jf, R_{V,\theta} g\rangle_{L^2(\Om)} \\
	&=:I-II
\end{align*}
for $f,g\in C^\infty_0(\Om)$.
Since $R_{V,\theta} f, R_{V,\theta} g\in H^1_0(\Om)$, 
there exist two sequences $\{f_n\}_n$, 
$\{g_m\}_m$ in $C^\infty_0(\Om)$ such that 
\[
f_n\to R_{V,\theta} f \quad \text{and}\quad 
g_m\to R_{V,\theta} g \quad 
\text{in }H^1(\Om) \quad (n,m\to\infty). 
\]
Hence we obtain by 
$x_j f_n , x_j g_m \in C_0^\infty (\Omega) $,
\begin{align*}
I=&\lim_{n\to\infty} \left\langle f_n, x_jg\right\rangle_{L^2(\Om)}\\
=&\lim_{n\to\infty} \left\langle x_jf_n,(\theta H_V+M)
R_{V,\theta} g\right\rangle_{L^2(\Om)}\\
=&\lim_{n\to\infty} \left\{ \theta\left\langle \nabla(x_jf_n), \nabla R_{V,\theta} g\right\rangle_{L^2(\Om)} 
+\langle (\theta V+M)x_jf_n,
R_{V,\theta} g \rangle_{L^2(\Omega)}
\right\}\\
=&\lim_{n,m\to\infty} \left\{ \theta\left\langle \nabla(x_jf_n), \nabla g_m\right\rangle_{L^2(\Om)}
+\langle (\theta V+M)x_jf_n,g_m\rangle_{L^2(\Omega)}
\right\}\\
=&\lim_{n,m\to\infty} \left\{\theta \left\langle f_n, \pa_{x_j} g_m\right\rangle_{L^2(\Om)}
+\theta\left\langle x_j\nabla f_n, \nabla g_m\right\rangle_{L^2(\Om)} 
+\langle (\theta V+M)x_jf_n,g_m\rangle_{L^2(\Omega)}
\right\}
\end{align*}
and 
\begin{align*}
II=&\lim_{m\to\infty} \left\langle x_jf, 
g_m\right\rangle_{L^2(\Om)}\\
=&\lim_{m\to\infty} \left\langle 
(\theta H_V+M)R_{V,\theta} f,
x_jg_m\right\rangle_{L^2(\Om)}\\
=&\lim_{m\to\infty} \left\{ \theta\left\langle \nabla R_{V,\theta} f, \nabla (x_jg_m)\right\rangle_{L^2(\Om)} 
+\langle (\theta V+M)x_jR_{V,\theta} f,g_m\rangle_{L^2(\Omega)}
\right\}\\
=&\lim_{n,m\to\infty} \left\{\theta\left\langle \nabla f_n, \nabla (x_jg_m)\right\rangle_{L^2(\Om)} 
+\langle (\theta V+M)x_jf_n,g_m\rangle_{L^2(\Omega)}
\right\}\\
=&\lim_{n,m\to\infty} \left\{\theta\left\langle \pa_{x_j} f_n,  g_m\right\rangle_{L^2(\Om)}
+\theta\left\langle x_j\nabla f_n,\nabla g_m
\right\rangle_{L^2(\Om)} 
+\langle (\theta V+M)x_jf_n,g_m\rangle_{L^2(\Omega)}
\right\}.
\end{align*}
Then, combining the above equations, we deduce that 
\begin{align*}
	{}_{\Dscr'(\Omega)}\langle 
	\mathrm{Ad}^1(R_{V,\theta})f,g\rangle_{\Dscr(\Omega)} =& \lim_{n,m\to\infty} \theta\left\{ \langle f_n, \pa_{x_j} g_m\rangle_{L^2(\Om)}-\langle \pa_{x_j} f_n,  g_m\rangle_{L^2(\Om)} \right\}\\
	=&\lim_{n,m\to\infty}\theta \langle -2 \pa_{x_j} f_n,  g_m\rangle_{L^2(\Om)}\\
	=&\langle -2\theta\pa_{x_j} R_{V,\theta} f,
	R_{V,\theta} g\rangle_{L^2(\Om)}\\
	=&\langle -2\theta R_{V,\theta} \pa_{x_j} R_{V,\theta} f,
	g\rangle_{L^2(\Om)}
\end{align*}
for any $f,g\in C^\infty_0(\Omega)$.
Thus \eqref{EQ:Recursive-Rtheta1} is valid in 
a distributional sense. 
In a similar way, 
\eqref{EQ:Recursive-Rthetak} 
can be also shown in a distributional sense. 
The proof of Lemma \ref{Lem:Recursive-Rtheta1} 
is finished. 
\end{proof}

\begin{lem}\label{Lem:Recursive-Rtheta2}
Assume that $V$ satisfies the same assumption as in Lemma \ref{Lem:Recursive-Rtheta1}. 
Let $A$, $B$ and $L$ be as in Lemma \ref{Lem:Recursive-Rtheta1}. 
Then 
the following formula holds for each $t>0$\,{\rm :}
\begin{equation}\label{EQ:Recursive-Rtheta2-1}
	\mathrm{Ad}^1(e^{-itR_{V,\theta}}) = - i \int^{t}_{0} e^{-isR_{V,\theta}} 
	\mathrm{Ad}^1(R_{V,\theta}) e^{-i(t-s)R_{V,\theta}} \,ds. 
\end{equation}
Furthermore, the following 
formulas hold for $k>1${\rm :}
\begin{align}\label{EQ:Recursive-Rtheta2-2}
	&\mathrm{Ad}^{k+1}(e^{-itR_{V,\theta}}) \\
	=& - i \int^{t}_{0} \sum_{k_1 + k_2 + k_3 = k}
	\Gamma(k_1,k_2,k_3) \mathrm{Ad}^{k_1}(e^{-isR_{V,\theta}}) \mathrm{Ad}^{k_2+1}(R_{V,\theta}) \mathrm{Ad}^{k_3}(e^{-i(t-s)R_{V,\theta}}) \,ds,\notag
\end{align}
where the constants $\Gamma(k_1,k_2,k_3)$
$(k_1,k_2,k_3 \ge0)$ 
are trinomial coefficients{\rm :}
\[
\Gamma(k_1,k_2,k_3)=\frac{k!}{k_1!k_2!k_3!}.
\]
\end{lem}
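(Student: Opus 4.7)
The plan has two steps: establish \eqref{EQ:Recursive-Rtheta2-1} by a Duhamel-type argument, and then derive the general formula \eqref{EQ:Recursive-Rtheta2-2} by applying $\mathrm{Ad}^k$ to \eqref{EQ:Recursive-Rtheta2-1} and expanding via a Leibniz-type rule.

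For the $k=1$ identity, I will introduce the operator-valued function
\[
G(s) := e^{-isR_{V,\theta}}\, x_j\, e^{-i(t-s)R_{V,\theta}}, \qquad 0 \le s \le t.
\]
Because $\theta^{1/2}n_j$ is a scalar, $A=B=x_j-\theta^{1/2}n_j$ satisfies $\mathrm{Ad}^1(L)=[x_j,L]$, so
\[
G(t)-G(0)= e^{-itR_{V,\theta}}x_j - x_j\, e^{-itR_{V,\theta}} = -\,\mathrm{Ad}^1(e^{-itR_{V,\theta}}).
\]
Differentiating $G$ and using that $R_{V,\theta}$ commutes with $e^{\pm isR_{V,\theta}}$ yields
\[
G'(s) = i\,e^{-isR_{V,\theta}}(x_jR_{V,\theta}-R_{V,\theta}x_j)\,e^{-i(t-s)R_{V,\theta}} = i\,e^{-isR_{V,\theta}}\,\mathrm{Ad}^1(R_{V,\theta})\,e^{-i(t-s)R_{V,\theta}},
\]
and integrating from $0$ to $t$ gives \eqref{EQ:Recursive-Rtheta2-1}.

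For the higher-order formula I will argue by induction via a Leibniz rule. When $A=B$, $\mathrm{Ad}$ acts as a derivation on products, $\mathrm{Ad}(L_1L_2)=\mathrm{Ad}(L_1)L_2+L_1\mathrm{Ad}(L_2)$, which extends by an elementary induction to the trinomial identity
\[
\mathrm{Ad}^k(L_1L_2L_3) = \sum_{k_1+k_2+k_3=k}\Gamma(k_1,k_2,k_3)\,\mathrm{Ad}^{k_1}(L_1)\,\mathrm{Ad}^{k_2}(L_2)\,\mathrm{Ad}^{k_3}(L_3).
\]
Applying $\mathrm{Ad}^k$ to both sides of \eqref{EQ:Recursive-Rtheta2-1}, taking $L_1=e^{-isR_{V,\theta}}$, $L_2=\mathrm{Ad}^1(R_{V,\theta})$, $L_3=e^{-i(t-s)R_{V,\theta}}$, using $\mathrm{Ad}^{k_2}(\mathrm{Ad}^1(R_{V,\theta}))=\mathrm{Ad}^{k_2+1}(R_{V,\theta})$, and moving $\mathrm{Ad}^k$ inside the $s$-integral reproduces exactly the right-hand side of \eqref{EQ:Recursive-Rtheta2-2}.

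The main obstacle is rigor rather than algebra: the multiplication operator $x_j$ is unbounded and does not preserve $\mathcal{D}(R_{V,\theta})$, so $G(s)$ and all of its commutators are only formally well-defined as operators on $L^2(\Om)$. To handle this, I will interpret every identity in the distributional sense via the pairing ${}_{\Dscr'(\Om)}\langle\cdot,\cdot\rangle_{\Dscr(\Om)}$, exactly as in the concluding part of the proof of Lemma \ref{Lem:Recursive-Rtheta1}. Concretely, I will test against $g\in C^\infty_0(\Om)$, approximate images such as $e^{\pm isR_{V,\theta}}R_{V,\theta}^{\,\bullet}g$ in $H^1(\Om)$ by $C^\infty_0(\Om)$-sequences, perform the differentiation of $G(s)$ and the integration in $s$ inside the pairing, and then pass to the limit. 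The interchange of $\mathrm{Ad}^k$ with the $s$-integral is justified by strong continuity of $s\mapsto e^{-isR_{V,\theta}}$ on $L^2(\Om)$ and uniform boundedness of the integrand on $[0,t]$, so Lebesgue's dominated convergence theorem applies. Once these limits are carried out carefully, \eqref{EQ:Recursive-Rtheta2-1} and \eqref{EQ:Recursive-Rtheta2-2} both hold in $\Dscr'(\Om)$, which is what is needed in the application to Proposition \ref{prop:Prop5.1}.
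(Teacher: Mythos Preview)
Your proposal is correct and follows essentially the same approach as the paper: both obtain \eqref{EQ:Recursive-Rtheta2-1} by differentiating $s\mapsto e^{-isR_{V,\theta}}x_j e^{-i(t-s)R_{V,\theta}}$ and integrating, and both handle the domain issues distributionally as in Lemma~\ref{Lem:Recursive-Rtheta1}. For \eqref{EQ:Recursive-Rtheta2-2} the paper merely says ``induction argument'' and omits details, while you spell out the trinomial Leibniz rule for $\mathrm{Ad}^k$ applied to the triple product inside the integral; this is exactly the mechanism the induction would unwind, so the two arguments coincide.
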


\begin{proof}
It is sufficient to prove the lemma without taking account of the domain 
of operators as in the proof of Lemma \ref{Lem:Recursive-Rtheta1}. 
We write 
\begin{align*}
\mathrm{Ad}^1(e^{-itR_{V,\theta}}) 
& = x_j e^{-itR_{V,\theta}} 
- e^{-itR_{V,\theta}} x_j\\
& =  - \int^t_0 \frac{d}{ds} 
\big(e^{-isR_{V,\theta}} x_j 
e^{-i(t-s)R_{V,\theta}} \big)\, ds \\
& =  -i \int^t_0 e^{-isR_{V,\theta}} 
(x_j R_{V,\theta} - R_{V,\theta} x_j) 
e^{-i(t-s)R_{V,\theta}} \, ds\\
& =  -i \int^t_0 e^{-isR_{V,\theta}} 
\mathrm{Ad}^1 (R_{V,\theta}) 
e^{-i(t-s)R_{V,\theta}} \, ds.
\end{align*}
This proves \eqref{EQ:Recursive-Rtheta2-1}. 
The proof of \eqref{EQ:Recursive-Rtheta2-2} is 
performed by induction argument.
So we may omit the details. 
The proof of Lemma \ref{Lem:Recursive-Rtheta2} is complete.  
\end{proof}


\begin{thebibliography}{99}


\bibitem{CH} T. Cazenave and A. Haraux, 
	 \newblock{ An Introduction to Semilinear Evolution Equations},
	 \newblock{Oxford Lecture Series in Mathematics and its Applications}, Vol. 13, 
	 \newblock{The Clarendon Press, 
	 Oxford University Press, New York}, 1998. 
	 
\bibitem{DP} P. D'Ancona and V. Pierfelice,
	 \newblock{{\it On the wave equation with a large rough potential}},
	 \newblock{J. Functional Analysis} 
	 {\bf 227} (2005), 30--77.
	 
\bibitem{FS} J.J.F. Fournier and J. Stewart, 
	 \newblock{{\it Amalgam of $L^p$ and $\ell^q$}},
	 \newblock{Bull. Amer. Math. Soc.} {\bf 13} (1985), 1--21.
	 	 
\bibitem{GV} V. Georgiev and N. Visciglia,
	 \newblock{{\it Decay estimates for the wave equation with potential}},
	 \newblock{Comm. Partial Differential Equations} {\bf 28} (2003), 1325--1369.
	 
\bibitem{GT} D. Gilbarg and N.S. Trudinger,
	 \newblock{Elliptic Partial Differential Equations of Second Order},
	 \newblock{Classics in Mathematics; Springer-Verlag, Berlin}, 2001.
 	 	 
\bibitem{JN1} A. Jensen and S. Nakamura,
	 \newblock{{\it Mapping properties of functions of Schr\"odinger operators between 
	 $L^p$-spaces and Besov spaces}, Spectral and scattering theory and applications}, 187--209,
	 \newblock{Adv. Stud. Pure Math.} {\bf 23},
	 Math. Soc. Japan, Tokyo, 1994.
	 
\bibitem{JN2} A. Jensen and S. Nakamura,
	 \newblock{{\it $L^p$-mapping properties of functions of Schr\"odinger operators and their applications to scattering theory}},
	 \newblock{J. Math. Soc. Japan} {\bf 47}
	 (1995), 253--273.
	 
\bibitem{RS1} M. Reed and B. Simon,
	 \newblock{Methods of Modern Mathematical Physics, I, Functional Analysis},
	 \newblock{Academic Press, New York}, 1975.
	 

\bibitem{Si} B. Simon,
	 \newblock{{\it Schr\"odinger semigroup}},
	 \newblock{Bull. Amer. Math. Soc.} {\bf 7} (1982), 447--526.

\end{thebibliography}
\end{document}